\newcommand{\coeff}{\operatorname{coeff}}
\newcommand{\MT}{\operatorname{MT}}
\newcommand{\ET}{\operatorname{ET}}
\newcommand{\bC}{\mathbb{C}}
\newcommand{\bR}{\mathbb{R}}
\newcommand{\bN}{\mathbb{N}}
\newcommand{\sgn}{\mathrm{sgn}}
\newcommand{\Res}{\mathrm{Res}}
\newcommand{\E}{\mathbf{E}}
\newcommand{\Var}{\mathbf{Var}}
\newcommand{\vlambda}{{\mathbf{\lambda}}}
\newcommand{\vrho}{{\mathbf{\rho}}}
\newcommand{\vx}{{\mathbf{x}}}
\newcommand{\vz}{{\mathbf{z}}}
\newcommand{\ve}{{\mathbf{e}}}
\newcommand{\vk}{{\mathbf{k}}}
\newcommand{\cC}{{\mathcal{C}}}
\newcommand{\cB}{{\mathcal{B}}}
\newtheorem{theorem}{Theorem}
\newtheorem{lemma}[theorem]{Lemma}
\newtheorem{proposition}[theorem]{Proposition}
\newtheorem{conjecture}[theorem]{Conjecture}
\theoremstyle{remark}
\newtheorem*{rem}{Remark}
\newtheorem{definition}{Definition}
\title{The random $k$ cycle walk on the symmetric group}
\author{Bob Hough}
\address{Department of Mathematics, Stanford University, 450 Serra Mall,
Stanford, CA, 94305, USA}
\curraddr{Mathematical Institute, University of Oxford, 
Radcliffe Observatory Quarter, Woodstock Road, Oxford, OX2 6GG, UK.}
\email{hough@maths.ox.ac.uk}
\thanks{The author is grateful for financial support from a Ric Weiland
Graduate Research Fellowship. The final preparation of this manuscript was completed with support from
ERC Research Grant 279438, Approximate Algebraic Structure and Applications.}
\subjclass[2010]{Primary 60J10, 60B15, 20C30}
\keywords{Random walk on a group, character bounds, symmetric group, cut-off
phenomenon}
\begin{document}

\begin{abstract}
We study the random walk on the symmetric group $S_n$ generated by 
the conjugacy class of cycles of length $k$.  We show that the convergence
to uniform measure of this walk has a cut-off in total variation distance after
$\frac{n}{k}\log n$ steps, uniformly in $k = o(n)$ as $n \to \infty$.  The
analysis follows from a new asymptotic estimation of the characters of the
symmetric group evaluated at cycles. 
\end{abstract}

\maketitle

\section{Introduction}
A well-known conjecture in the theory of random walk on a group asserts that
for the
random walk on the symmetric group generated by all permutations of a common
cycle structure, the mixing time of the walk depends only on the number of fixed
points.  Given measure $\mu$ on $S_n$ and integer $t \geq 1$, denote $\mu^{*t}$ its $t$-fold convolution, which is the law of the random walk starting from the identity in $S_n$ after $t$ steps from $\mu$.  We measure convergence to uniformity in the total variation metric, which for probabilities $\mu$ and $\nu$ on $S_n$ is given by
\[
 \|\mu - \nu\|_{T.V.} = \max_{A \subset S_n} |\mu(A) - \nu(A)|.
\]
 A precise statement of the conjecture is as follows.

\begin{conjecture}\label{main_conjecture}
 For each $n > 1$ let $C_n$ be a conjugacy class of the symmetric group $S_n$
having $n-k_n< n$ fixed points, and let $\mu_{C_n}$ denote the uniform probability measure on $C_n$.  Let $t_2, t_3, t_4, ...$ be
a sequence of positive integer steps and for each $n \geq 2$ let $U_n$ be
uniform measure on the coset of
the alternating group $A_n$ supporting the measure $\mu_{C_n}^{*t_n}$.  For any
$\varepsilon > 0$, if eventually  $t_n
\geq (1 +\varepsilon) \frac{n}{k_n} \log n$, then
\[
\lim_{n \to \infty} \|\mu_{C_n}^{*t_n} - U_n\|_{T.V.} = 0.
\]
If eventually $t_n \leq (1-\varepsilon) \frac{n}{k_n}
\log n$, then
\[
 \lim_{n \to \infty} \|\mu_{C_n}^{*t_n} - U_n\|_{T.V.} = 1.
\]
\end{conjecture}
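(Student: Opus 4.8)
\emph{Strategy.} I prove the conjecture in the regime the abstract settles: $C_n$ is the class of $k_n$-cycles with $k = k_n = o(n)$; write $\mu = \mu_{C_n}$. The backbone is the spectral picture for convolution by a conjugacy-class measure: $\mu$ acts on the $\lambda$-isotypic subspace of $L^2(S_n)$ as the scalar $\beta_\lambda := \chi_\lambda(C_n)/d_\lambda$, so that $\E_{\mu^{*t}}[\chi_\lambda] = d_\lambda\,\beta_\lambda^t$ for every $\lambda \vdash n$, while $\E_{U_n}[\chi_\lambda] = 0$ unless $\lambda \in \{(n),(1^n)\}$, the only irreducibles trivial on $A_n$. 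The one substantial ingredient is a sharp estimate of the normalized characters at a $k$-cycle, uniform in $k = o(n)$: expanding $\beta_\lambda$ by the Murnaghan--Nakayama rule (or, more efficiently, via a contour-integral representation coming from the action of the power sum $p_k$ on Schur functions) exhibits a dominant rim-hook contribution $\MT(\lambda)$ --- removal of the extremal horizontal or vertical strip --- of size $\asymp (1 - r_\lambda/n)^{k}$, where $r_\lambda := \min(n-\lambda_1,\ n-\lambda_1')$, together with an error $\ET(\lambda)$ controlled precisely enough that $|\beta_\lambda|$ is, up to harmless corrections, at most $(1 - r_\lambda/n)^{k}$; the point is that this control must survive being raised to a power of order $\tfrac nk\log n$ wherever $\MT(\lambda)$ is not already negligible. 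I take this estimate as given and deduce the cut-off.

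\emph{Upper bound: $t \ge (1+\varepsilon)\tfrac nk\log n$.} Apply the Diaconis--Shahshahani upper bound lemma. Since $\mu^{*t}$ and $U_n$ are supported on the same coset of $A_n$, only the two representations trivial on $A_n$ drop out, whence
\[
4\,\|\mu^{*t} - U_n\|_{T.V.}^2 \ \le\ \sum_{\lambda \ne (n),\,(1^n)} d_\lambda^2\,|\beta_\lambda|^{2t}.
\]
I organize the right-hand side by the value of $r_\lambda$. For $\lambda$ with $r_\lambda = r$ one has $d_\lambda \le n^{r}$ (send a standard tableau to the set of entries below its first row together with their relative order, and iterate on that shorter tableau), there are $e^{O(\sqrt r)}$ such $\lambda$, and the character estimate gives $|\beta_\lambda|^{2t} \lesssim (1 - r/n)^{2kt} \le n^{-2r(1+\varepsilon)}$ because $kt/n = (1+\varepsilon)\log n$; hence the layer $r_\lambda = r$ contributes $\lesssim n^{2r}\cdot n^{-2r(1+\varepsilon)} = n^{-2r\varepsilon}$. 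Summing over $r \ge 1$ yields $O(n^{-2\varepsilon}) \to 0$, the leading term being $r_\lambda = 1$: the representations $(n-1,1)$ and $(2,1^{n-2})$, for which $\beta_\lambda = \pm\tfrac{n-k-1}{n-1}$ exactly, contribute $(n-1)^2(1-\tfrac k{n-1})^{2t} \le n^2 e^{-2kt/(n-1)} = n^{-2\varepsilon+o(1)}$. The balanced diagrams, where $r_\lambda$ is of order $n$, are negligible: the gain $n^{-2 r_\lambda \varepsilon}$ there crushes even $d_\lambda^2 \le n!$.

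\emph{Lower bound: $t \le (1-\varepsilon)\tfrac nk\log n$.} Separate the two laws with $W(\sigma) = \#\Fix(\sigma) = 1 + \chi_{(n-1,1)}(\sigma)$. Under $U_n$ the statistic $W$ is asymptotically $\mathrm{Poisson}(1)$ (a standard fact, valid on each coset of $A_n$), so $U_n(W \ge m) \to 0$ for any $m \to \infty$. Under $\mu^{*t}$, the spectral identity gives $\E_{\mu^{*t}}[W-1] = (n-1)(\tfrac{n-k-1}{n-1})^t =: M$, and since $k = o(n)$ allows the linearization $\log(1 - \tfrac k{n-1}) = -\tfrac k{n-1}(1+o(1))$, we get $M = n^{\varepsilon - o(1)} \to \infty$. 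For the variance, decompose the tensor square of the standard representation, $\chi_{(n-1,1)}^2 = \chi_{(n)} + \chi_{(n-1,1)} + \chi_{(n-2,2)} + \chi_{(n-2,1,1)}$; then $\E_{\mu^{*t}}[(W-1)^2]$ is the corresponding sum of four terms $d_\lambda\beta_\lambda^t$, and using $\beta_{(n-2,2)},\,\beta_{(n-2,1,1)} = \beta_{(n-1,1)}^{2}(1+o(1))$ and $d_{(n-2,2)},\,d_{(n-2,1,1)} = \tfrac{(n-1)^2}{2}(1+o(1))$ one checks that the two order-$M^2$ terms cancel the $M^2$ coming from $(\E_{\mu^{*t}}[W-1])^2$, leaving $\Var_{\mu^{*t}}(W) = O(M) = o(M^2)$. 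By Chebyshev, $\mu^{*t}(W \ge 1 + M/2) \to 1$ while $U_n(W \ge 1 + M/2) \to 0$; taking $A = \{W \ge 1 + M/2\}$, $\|\mu^{*t} - U_n\|_{T.V.} \ge \mu^{*t}(A) - U_n(A) \to 1$, and since total variation never exceeds $1$, the limit is $1$.

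\emph{The main obstacle.} Everything above is routine; the real content is the character estimate --- the ``new asymptotic estimation'' of the abstract. One must determine $\beta_\lambda$ at a $k$-cycle to a relative precision that is not ruined after raising to the power $2t \asymp \tfrac nk\log n$, uniformly over all $\lambda \vdash n$: at once for the diagrams concentrated near one row or column (where $\beta_\lambda$ is close to $\pm 1$ and the main term is needed essentially exactly), for the balanced diagrams (where $\beta_\lambda$ is already small but many rim hooks compete and cancel), and for everything in between, and uniformly in $k = o(n)$. I would do this analytically: write $\beta_\lambda = \tfrac{(n-k)!}{n!}\cdot\tfrac{1}{2\pi i}\oint \Phi_\lambda(z)\,\tfrac{dz}{z}$ for a suitable rational $\Phi_\lambda$ assembled from the shifted parts $\lambda_i - i$, then push the contour outward and collect residues; the residue nearest the original contour reproduces $\MT(\lambda)$, and the remaining residues together with the pushed-out integral must be estimated to yield $\ET(\lambda)$. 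Controlling that residue sum --- especially the cancellation among residues when $\lambda$ has many corners, and locating the dominant saddle when $k$ is large --- is where the difficulty concentrates.
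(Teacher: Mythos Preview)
Your overall architecture matches the paper exactly: the Diaconis--Shahshahani $L^2$ upper bound, character ratios at a $k$-cycle estimated through Frobenius' contour integral, and the lower bound via the fixed-point statistic with Chebyshev on the variance computed from the tensor-square decomposition of the standard representation. The lower bound is essentially the paper's Appendix~A argument verbatim, and your identification of the contour-integral character estimate as the crux is exactly right.

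There is, however, one genuine bookkeeping slip in your upper-bound sketch. You take $d_\lambda \le n^{r}$ and pair it with a character bound of the shape $|\beta_\lambda| \lesssim (1-r/n)^k$ ``up to harmless corrections.'' But the corrections are not harmless against that dimension bound. What the contour analysis actually delivers (see the paper's criterion Lemma and the optimization lemma) is
\[
\log|\beta_\lambda| \le -\frac{kr}{n} + \frac{kr\log r}{2n\log n} + O\!\left(\frac{kr}{n\log n}\right)
\]
in the regime where the first row dominates. Raised to the power $2t = 2(1+\varepsilon)\tfrac{n}{k}\log n$ this contributes a factor $r^{(1+\varepsilon)r}$, and $n^{2r}\cdot n^{-2(1+\varepsilon)r}\cdot r^{(1+\varepsilon)r}$ is \emph{not} small once $r$ exceeds roughly $n^{2\varepsilon/(1+\varepsilon)}$. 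The paper repairs this by using the sharper dimension bound $d_\lambda \le \binom{n}{r}\sqrt{r!}$, whose extra $1/\sqrt{r!}$ exactly absorbs the $r^{(1+\varepsilon)r}$ correction; equivalently, it packages the two bounds together as the single inequality $|\beta_\lambda|^{\frac{n}{k}(\log n + c)} \le 1/d_\lambda$ (Proposition~2) and then controls $\sum_\lambda d_\lambda^{-c/\log n}$ separately (Proposition~3). Either fix is easy once noticed, but your layer-by-layer estimate as written does not close in the intermediate range of $r$.
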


\noindent This
conjecture aims to generalize the mixing time analysis of Diaconis and
Shahshahani \cite{diaconis_shahshahani} for the random transposition walk.  The
formal conjecture seems to have first
appeared in \cite{roichman_1}.

When $k$ grows with $n$ like a constant times $n$ the
conjecture is known to be false  because the
walk mixes too rapidly.  This is the work of a number of authors, but first
\cite{lulov_pak} and for later results see \cite{larsen_shalev} and
references
therein.  Whenever $k = o(n)$ the conjecture
is expected to hold, however, in part because the proposed lower bound on mixing
time follows from standard techniques in the field.  We give the second moment
 method proof in Appendix \ref{lower_bound_appendix}.

Since the initial work of Diaconis and Shahshahani,  Conjecture
\ref{main_conjecture} has received quite a bit of attention, see 
 \cite{flatto_odlyzko_wales}, \cite{lulov_thesis}, \cite{roichman_1},
\cite{roichman_2}, 
\cite{roussel_1}, \cite{roussel_2}, \cite{vershik_kerov}  for cases of conjugacy classes with finite numbers of non-fixed points.
A discussion of ongoing work of Schlage-Puchta towards the general conjecture is contained in  \cite{saloff-coste_zuniga}.  See
\cite{diaconis_book} and \cite{saloff-coste_book} for broader
perspective. 

Recently Berestycki, Schramm and Zeitouni
\cite{berestycki}
have established the conjecture for any set of $k$ cycles
with $k$ fixed as $n \to \infty$.  Moreover, they assert that their analysis
will go through to treat the case of any conjugacy class having bounded total
length of non-trivial cycles, and may cover the case when the total cycle
length grows like $o(\sqrt{n})$, although they state that they have not checked carefully the
uniformity in $k$ with respect to $n$.  

The purpose of this article is to prove Conjecture \ref{main_conjecture}
for the random $k$ cycle walk in the full range $k = o(n)$.  
\begin{theorem}\label{main_theorem}
Conjecture \ref{main_conjecture} holds when $C$ is the conjugacy class of
all $k$ cycles with $k$ permitted to be any function of $n$ that satisfies $k =
o(n)$ as $n \to \infty$.  If $k = o\left(\frac{n}{\log n}\right)$ then the
conclusion of Conjecture
\ref{main_conjecture} remains valid when $\varepsilon = \varepsilon(n)$ is any
function satisfying $\varepsilon(n) \log n \to \infty$.
\end{theorem}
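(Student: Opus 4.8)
\emph{Proof strategy.} The plan is to prove the convergence statement (the regime $t_n\ge(1+\varepsilon)\tfrac{n}{k_n}\log n$) by Fourier analysis on $S_n$, via the Diaconis--Shahshahani upper bound lemma \cite{diaconis_shahshahani}, and the non-convergence statement (the regime $t_n\le(1-\varepsilon)\tfrac{n}{k_n}\log n$) by the second moment method applied to the number of fixed points, the latter being the content of Appendix~\ref{lower_bound_appendix}. Since $\mu_C$ is a class function, Schur's lemma makes its Fourier transform at an irreducible representation $\lambda\vdash n$ the scalar matrix $\widehat{\mu_C}(\lambda)=\tfrac{\chi^\lambda(C)}{\dim\lambda}\,I$; a $k$-cycle is even or odd according to the parity of $k$, so after $t$ steps the walk lives on a fixed coset of $A_n$, and matching the trivial and sign coefficients of $\mu_C^{*t}$ with those of the uniform measure $U_n$ on that coset, the upper bound lemma reduces everything to
\[
4\,\|\mu_C^{*t}-U_n\|_{T.V.}^2\;\le\;\sum_{\lambda\,\ne\,(n),\,(1^n)}(\dim\lambda)^2\left|\frac{\chi^\lambda(C)}{\dim\lambda}\right|^{2t},
\]
in which conjugate partitions $\lambda,\lambda'$ contribute equally because $\chi^{\lambda'}(C)=\sgn(C)\chi^\lambda(C)$. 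The whole matter thus comes down to an estimate for the normalized character $\chi^\lambda(C)/\dim\lambda$ of a $k$-cycle, strong enough and sufficiently uniform in $k=o(n)$ that the right side tends to $0$ once $t\ge(1+\varepsilon)\tfrac{n}{k}\log n$.

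The location of the cut-off is already dictated by the representation $\lambda=(n-1,1)$ and its conjugate. A $k$-cycle has $n-k$ fixed points, so its trace on the standard representation is $n-k-1$, giving $\tfrac{\chi^{(n-1,1)}(C)}{\dim(n-1,1)}=1-\tfrac{k}{n-1}$, and this term contributes $(n-1)^2\!\left(1-\tfrac{k}{n-1}\right)^{2t}\le\exp\!\left(2\log n-\tfrac{2tk}{n-1}\right)$, which tends to $0$ exactly when $\tfrac{tk}{n}-\log n\to\infty$. This pins the threshold at $\tfrac{n}{k}\log n$ and also yields the refinement with $\varepsilon=\varepsilon(n)$ whenever $\varepsilon(n)\log n\to\infty$; the extra hypothesis $k=o(n/\log n)$ there is needed only to keep the lower-order (in particular $O(k^2/n^2)$) corrections to $\log\!\left(1-\tfrac{k}{n-1}\right)$ negligible against the gap $\varepsilon(n)\log n$, mainly in the matching lower bound. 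Everything else must be shown to be of strictly smaller order than this.

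The substantive part, and the main obstacle, is to bound the remaining sum uniformly in $k$. I would organize the partitions $\lambda\ne(n),(1^n)$ according to $d(\lambda):=n-\max(\lambda_1,\lambda_1')$, their distance from the trivial and sign representations. For $d(\lambda)$ small there are only $O(p(d))$ partitions with a given value $d$, the dimension is only polynomially large (of order $n^{d}/\sqrt{d!}$), and the input required is a \emph{sharp} character asymptotic of the shape $\bigl|\tfrac{\chi^\lambda(C)}{\dim\lambda}\bigr|\approx\bigl(1-\tfrac{k}{n}\bigr)^{d}$, with the lower-order corrections controlled uniformly in $k=o(n)$; summed, this range contributes $O\!\left(\sum_{d\ge1}\tfrac{p(d)}{d!}\bigl(n^2e^{-2tk/n}\bigr)^{d}\right)$, a convergent series in the small parameter $n^2e^{-2tk/n}\le n^{-2\varepsilon}$, dominated by and of the same order as the $d=1$ term treated above. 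For $d(\lambda)$ a positive proportion of $n$, only the crude estimate $(\dim\lambda)^2\le n!$ is available, and one needs only a strong bound of the type established by Roichman and by Larsen--Shalev \cite{roichman_2,larsen_shalev}, namely $\bigl|\tfrac{\chi^\lambda(C)}{\dim\lambda}\bigr|\le(\dim\lambda)^{-ck/n}$ for an absolute $c>0$; then $(\dim\lambda)^2|\chi^\lambda(C)/\dim\lambda|^{2t}\le(\dim\lambda)^{2-2ckt/n}$, whose exponent is driven to $-\infty$ by $t\ge(1+\varepsilon)\tfrac{n}{k}\log n$, so this range is negligible. The difficulty, and the reason the published bounds do not already suffice, is the uniformity in $k$ relative to $n$ across the full range $k=o(n)$ together with the transition regime where $d(\lambda)$ is large but still $o(n)$ — the dimension then superpolynomial but subexponential, and no single clean inequality simultaneously controls the dimension and the character. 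Supplying the needed asymptotics for $\chi^\lambda$ at a $k$-cycle is the heart of the argument; I would approach it through the Murnaghan--Nakayama recursion together with an attendant generating-function / contour-integral analysis, tracking throughout the dependence on both $k$ and the shape of $\lambda$.

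For the lower bound I would use the number of fixed points $F$ as the distinguishing statistic. As a class function $F=1+\chi^{(n-1,1)}$, so $\E_{\mu_C^{*t}}[F]=1+(n-1)\bigl(1-\tfrac{k}{n-1}\bigr)^{t}$, which is $\ge n^{\varepsilon-o(1)}\to\infty$ when $t\le(1-\varepsilon)\tfrac{n}{k}\log n$ (again using $k=o(n/\log n)$ to absorb the correction terms in the $\varepsilon(n)$ version), whereas $\E_{U_n}[F]=1$. Expanding $(\chi^{(n-1,1)})^2$ into the irreducibles $(n),(n-1,1),(n-2,2),(n-2,1,1)$ and inserting the corresponding character values shows $\Var_{\mu_C^{*t}}(F)=o\bigl(\E_{\mu_C^{*t}}[F]^2\bigr)$, so by Chebyshev $F$ concentrates far above $1$ under $\mu_C^{*t}$ while it stays $O(1)$ with high probability under $U_n$ (where it is asymptotically Poisson$(1)$), forcing $\|\mu_C^{*t}-U_n\|_{T.V.}\to1$. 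The parity of $k$, hence the coset carrying $U_n$, plays no role here. The details are in Appendix~\ref{lower_bound_appendix}.
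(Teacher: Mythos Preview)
Your proposal is correct in outline and aligns with the paper's approach: the upper bound via the Diaconis--Shahshahani lemma reducing to character-ratio estimates, the lower bound via the second-moment method on fixed points, and the identification of the uniformity in $k$ across the full shape range of $\lambda$ as the crux.

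Two points of difference are worth noting. First, the paper does not split by $d(\lambda)$ and invoke existing Roichman/Larsen--Shalev character bounds for the balanced regime; instead it proves a single uniform estimate (Proposition~\ref{char_ratio_prop}), namely $\bigl|\chi^\lambda(C)/f^\lambda\bigr|^{(n/k)(\log n + c_1)} \le 1/f^\lambda$ for \emph{all} $\lambda$, and pairs it with a dimension-sum bound $\sum_\lambda (f^\lambda)^{-c_2/\log n} = O(1)$ (Proposition~\ref{dimension_prop}, whose proof uses the Larsen--Shalev \emph{dimension} approximation, not their character bounds). This packaging handles the transition regime you flag without a separate case. Second, the character estimate is obtained not through Murnaghan--Nakayama but through Frobenius' contour-integral formula for $\chi^\lambda$ at a $k$-cycle; the key technical idea, and what makes the argument go through uniformly for $k=o(n)$, is to estimate the integral itself on carefully chosen contours rather than to sum its residues (nearby residues become unstable once $k$ is large, which is why earlier residue-based attempts stalled). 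Your mention of ``generating-function / contour-integral analysis'' points in the right direction, but the specific mechanism---grouping clumps of poles inside boxes and bounding the integrand on well-spaced contours (Theorem~\ref{character_ratio_theorem})---is precisely what supplies the missing uniformity you correctly identify as the obstacle.
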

\begin{rem}
 When $k = o\left(\frac{n}{\log n}\right)$ the window $\epsilon(n)\log n \to \infty$ is essentially best possible.
\end{rem}

As remarked above, the lower bound was already known.  Prior to
\cite{berestycki}, which is purely combinatorial, all approaches to Conjecture
\ref{main_conjecture} have followed the work of Diaconis and Shahshahani in
passing through bounds for characters on the symmetric group.  We
return to this previous approach;
in particular, we give an asymptotic
 evaluation of character ratios at a $k$ cycle for many
representations, in the range $k = O\left(n^{\frac{1}{2}-\epsilon}\right)$.  The
asymptotic formula
appears to be new when $k > 6$, see \cite{ingram} for the smaller cases.   A
precise statement of our result appears in Section
\ref{S_n_char_theory_section},
after we introduce
the necessary notation. 

The basis for our argument is an old formula of Frobenius \cite{frobenius},
which gives the value of a character of the symmetric group evaluated at a
$k$ cycle as a contour integral of a certain rational function, characterized by
the cycle and the representation. In the special case of a transposition this
formula was already used by Diaconis and Shahshahani. Previous authors in
attempting to extend the
result of \cite{diaconis_shahshahani} had also used Frobenius' formula, but they
had attempted to estimate the sum of residues of the function directly, which
entails  significant difficulties since nearby residues of the function are
unstable once the cycle length $k$ becomes somewhat
large. 
We avoid these difficulties by estimating the integral itself rather
than the residues in most situations.  In doing so, a certain regularity of the
character values becomes evident.  For instance, while nearby residues
appear irregular, by grouping clumps of poles inside a common integral we are
able to show that the `amortized' contribution of any pole is slowly
varying.

It may be initially surprising that our analysis becomes greatly simplified as
$k$ grows.  For instance, once $k$ is larger than a sufficiently large
constant times $\log n$, essentially trivial bounds for the contour integral
suffice, and the greatest part of the analysis goes into showing that our method
can handle the handful of small cases  which had been treated
previously using the character approach.  A similar feature occurs in the
related paper \cite{hough_jiang} where a contour integral is also used
to bound character ratios at rotations on the orthogonal group.  It seems
that the contour method works best when there is significant oscillation in the
sum of residues, but is difficult to use in the cases where the residues are
generally positive.

In Appendix \ref{contour_appendix} we show that
Frobenius' formula has a
natural generalization to other conjugacy classes on the symmetric group, but
with increasing complexity, since one contour integration enters for each
non-trivial cycle.  The analysis here applies more broadly than to
just the class of $k$ cycles, but we have not pushed the method to its limit
because it seems that some new ideas are needed
to obtain the full Conjecture \ref{main_conjecture} when the number of small
cycles in
the cycle decomposition becomes large.  From our point of view, the classes
containing $\geq n^{1-\epsilon}$ 2-cycles would appear to pose the greatest
difficulty. 

We remark that, as typical in the character ratio approach, our upper
bound is a consequence of a corresponding upper bound for the walk in $L^2$, so
that our result gives a broader set of Markov chains on $S_n$ to which one can
apply the comparison techniques of \cite{diaconis_saloff_coste}.

Regarding notation, it will occasionally be convenient to use the
Vinogradov notation $A \ll B$, with the same meaning as $A = O(B)$.  The
implicit constants in notation of both types should be assumed to vary from line
to line.

\subsection*{Acknowledgement} I am grateful to Persi Diaconis and
John Jiang for 
stimulating discussions and to K. Soundararajan for drawing my attention to the
paper \cite{larsen_shalev}.  I thank the referees for correcting a number of minor errors in the original submission.

\section{Character theory and mixing times}
We recall some basic facts regarding the character theory of a finite group.  A good reference for these is
\cite{diaconis_book}.

A conjugation-invariant measure $\mu$ on finite group $G$ is a class
function, which means that it has a `Fourier expansion' expressing $\mu$ as a
linear combination of the irreducible characters $X(G)$ of $G$.  It will
be convenient to normalize the Fourier coefficients by setting 
\[ 
\forall \chi \in X(G), \qquad \hat{\mu}(\chi)= \frac{1}{\chi(1)}\sum_{g \in
G}\mu(g)\overline{\chi(g)}.
\]
By orthogonality of characters, the Fourier expansion takes the form
\begin{equation}\label{fourier_expansion}
 \mu = \frac{1}{|G|}\sum_{\chi \in X(G)}\chi(1)
\hat{\mu}(\chi)
\chi,
\end{equation}
since, writing $C_x$ for the conjugacy class of $x \in G$,
\begin{align*}
\frac{1}{|G|} \sum_{\chi \in X(G)} \chi(1)\hat{\mu}(\chi) \chi(x)  &=
\frac{1}{|G|}
\sum_{\chi \in X(G)} \sum_{g \in G} \mu(g) \overline{\chi}(g) \chi(x)\\  
&= \sum_{g \in C_x} \frac{\mu(g)}{|C_x|}= \mu(x).
\end{align*}
In this setting, the Plancherel identity is
\begin{equation}\label{plancherel}
 \|\mu\|_2^2 = \frac{1}{|G|}\sum_{g \in G} |\mu(g)|^2 =
\frac{1}{|G|^2}\sum_{\chi \in X(G)} \chi(1)^2 |\hat{\mu}(\chi)|^2.
\end{equation}

Note the somewhat non-standard factor of the inverse of the dimension
$\chi(1)^{-1}$ in the Fourier coefficients.  The advantage of this choice is
that the Fourier map satisfies
the
familiar property of carrying convolution to pointwise multiplication: for
conjugation-invariant measures $\mu_1$ and $\mu_2$
\begin{equation}\label{convolution}
 \forall \chi \in X(G), \qquad \widehat{\mu_1 \ast \mu_2}(\chi) =
\hat{\mu}_1(\chi) \cdot \hat{\mu}_2(\chi).
\end{equation}
This is because the regular representation in $L^2(G)$ splits as the direct sum
over irreducible
representations,
\[
 L^2(G) = \bigoplus_{\rho} d_\rho M_\rho,
\]
and convolution by $\mu_i$ acts as a scalar multiple of the identity on each
representation space $M_{\rho}$, the scalar of proportionality being
$\hat{\mu}_i(\chi_\rho)$.

When $G = S_n$ is the symmetric
group on $n$ letters, the irreducible representations are indexed by
partitions of $n$, with the partition $(n)$ corresponding to the trivial
representation, and partition $(1^n)$ corresponding to the sign.  
Given a conjugacy class $C$ on $S_n$ and integer $t
\geq 1$, the measure $\mu_C^{\ast t}$ is conjugation invariant, and supported on
permutations of a fixed sign, odd if both $C$ and $t$ are odd, and otherwise
even.  We let $U_t$ be the uniform measure on permutations of this sign:
\[
 \forall \sigma \in S_n, \qquad U_t(\sigma) = \frac{1 +
\sgn(\sigma)\cdot\sgn(C)^t}{n!}
\]
with Fourier coefficients 
\[
 \hat{U_t}(\chi^n) = 1, \qquad \hat{U_t}(\chi^{1^n}) = \sgn(C)^t, \qquad
\hat{U_t}(\chi^\lambda) = 0, \; \lambda \neq n, 1^n.
\]
The total variation distance between $\mu_C^{\ast t}$ and $U_t$ is equal to
\begin{align*}
 \|\mu_C^{*t} - U_t\|_{T.V.}&:= \sup_{A \subset S_n} \left|\mu_C^{*t}(A) -
U_t(A)\right| \\&=\frac{1}{2}\sum_{\sigma \in S_n} \left|\mu_C^{*t}(\sigma) -
\frac{1
+ \sgn(\sigma)\cdot\sgn(C)^t}{n!}\right|.
\end{align*}
Thus Cauchy-Schwarz, Plancherel, and the convolution identity give the upper
bound 
\begin{equation}\label{upper_bound_lemma}
  \|\mu_C^{*t}-U_t\|_{T.V.} \leq \frac{1}{2} \left(\sum_{\lambda \vdash n,
\lambda
\neq n, 1^n} (\chi^\lambda(1))^2
\left(\frac{\chi^\lambda(C)}{\chi^\lambda(1)}\right)^{2t}\right)^{\frac{1}{2}}.
\end{equation}

The main ingredient in the proof of the upper bound of  Theorem 
\ref{main_theorem} will thus be the following estimate for character
ratios.
\begin{proposition}\label{char_ratio_prop}
 Let $C$ be the class of $k$ cycles
on $S_n$. There exists a constant $\delta > 0$ and a constant $c_1> 0$ such that
uniformly in $n$, partition $\lambda \vdash n$ and all $2 \leq k < \delta n$ 
\[ \left|\frac{\chi^\lambda(C)}{\chi^\lambda(1)}\right|^{\frac{n}{k}(\log n
+ c_1)}
\leq \frac{1}{\chi^\lambda(1)}.\]
\end{proposition}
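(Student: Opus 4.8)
The plan is to reduce the bound for all partitions $\lambda$ to a bound for a single, controlled quantity via the Frobenius contour integral formula, and to exploit the near-multiplicativity of character ratios under concatenation of parts. Write $r = \chi^\lambda(C)/\chi^\lambda(1)$ for the character ratio at a $k$ cycle and $d = \chi^\lambda(1)$. The inequality to prove is $|r|^{\frac{n}{k}(\log n + c_1)} \le d^{-1}$, equivalently $\frac{n}{k}(\log n + c_1) \log \frac{1}{|r|} \ge \log d$, so it suffices to show that whenever $|r|$ is close to $1$ the dimension $d$ is correspondingly small, quantitatively $\log d \ll \frac{n}{k}(\log n)\log\frac{1}{|r|}$ with room to spare from $c_1$. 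The two extreme regimes are the trivial-ish ones: if $\lambda$ is very "unbalanced" (close to $(n)$ or $(1^n)$) then $d$ is small but $|r|$ is close to $1$, while if $\lambda$ is balanced then $|r|$ is bounded away from $1$ but $d$ can be as large as $\sqrt{n!}$; the content is the interpolation. By the symmetry $\chi^{\lambda'}(C) = \sgn(C)\chi^\lambda(C)$ under conjugation of the partition, I may assume $\lambda_1 \ge \lambda_1'$, i.e.\ the first row is at least as long as the first column.

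The first step is to invoke Frobenius' formula, which expresses $r = \chi^\lambda(C)/\chi^\lambda(1)$ as a contour integral $\frac{k}{(2\pi i)} \oint F_\lambda(z)\, dz$ of a rational function built from the content-type data of $\lambda$, and to extract from it the basic bound $|r| \le$ (something like) $\frac{1}{\lambda_1 - 1}$ when $\lambda_1$ dominates, or more precisely a bound of the form $|r| \le \max\!\left(\frac{\lambda_1-1}{n}, \frac{\lambda_1'-1}{n}, n^{-1/2+o(1)}\right)$ or similar — this is exactly the kind of estimate the paper advertises it will prove in Section~\ref{S_n_char_theory_section} and in Proposition~\ref{char_ratio_prop}'s neighborhood. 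Second, I would bound the dimension in terms of the same parameter: a hook-length / Stanley-type estimate gives $\log \chi^\lambda(1) \le$ roughly $(n - \lambda_1)\log n$ (up to constants and lower order), and more generally $\log d \ll (n - \max(\lambda_1,\lambda_1'))\log n$ when $\lambda$ is close to a single row or column, with $\log d \le \frac12 \log n!$ always. Combining: if $|r| \le c/n$ with $n - \lambda_1 = m$, then the left side is $\gtrsim \frac{n}{k}(\log n)\cdot (\log n - \log c) \gg m \log n \ge \log d$ once $m \le n$, i.e.\ always, because $\frac{n}{k} \ge 1$. The genuinely delicate range is the balanced middle range, where neither $\lambda_1$ nor $\lambda_1'$ is large; there I expect $|r| \le n^{-1/2 + o(1)}$ (the generic decay of character ratios at a $k$ cycle), so $\log\frac{1}{|r|} \gtrsim \frac12 \log n$, and the left side is $\gtrsim \frac{n}{2k}(\log n)^2 \ge \frac12 (\log n)^2 \cdot \frac{n}{k}$, which must beat $\log d \le \frac12 \log n! \le \frac{n}{2}\log n$; this needs $\frac{n}{k}(\log n)^2 \gtrsim n \log n$, i.e.\ $\log n \gtrsim k$ — which fails for large $k$. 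So for large $k$ one needs the sharper decay $|r| \le C(\lambda_1 - 1)/n$ type bounds together with the fact that when $d$ is genuinely large the partition cannot be too close to a hook, making $|r|$ genuinely small; this is the crux.

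The main obstacle, then, is precisely the tension in the middle range for moderately large $k$: one must show that the decay rate of $|r|$ as a function of $k$ keeps pace with $\log d$. I expect the resolution to be a bound of the shape $\log\frac{1}{|r|} \ge \frac{ck}{n}\log d$ — i.e.\ $|r| \le \exp(-\frac{ck}{n}\log d) = d^{-ck/n}$ — proved by a self-improving or "tensoring" argument: applying the Frobenius estimate not just to a single $k$ cycle but comparing across the structure of $\lambda$, or by a direct contour estimate showing $F_\lambda$ has a zero of high order so that $|r|$ decays like a power of $k/n$ in the exponent of $d^{-1}$. Granting such a bound, $|r|^{\frac{n}{k}(\log n + c_1)} \le d^{-c(\log n + c_1)}$, which is $\le d^{-1}$ as soon as $c(\log n + c_1) \ge 1$, true for all $n$ upon choosing $c_1$ large enough (and $\delta$ small enough to make the Frobenius estimates valid). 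The routine parts — the hook-length dimension bound, the reduction to $\lambda_1 \ge \lambda_1'$, the elementary contour estimate for small $m = n - \lambda_1$ — I would dispatch quickly; the weight of the proof is in establishing the $d^{-ck/n}$-type character ratio bound uniformly in $\lambda$ and in $k < \delta n$, which is where the contour-integral technology of the paper does its work.
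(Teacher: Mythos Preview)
Your proposal has a concrete error in the key bound. You propose that the resolution is an inequality of the form $\log\frac{1}{|r|} \ge \frac{ck}{n}\log d$, equivalently $|r| \le d^{-ck/n}$ for some fixed $c>0$. This is false already for the standard representation $\lambda=(n-1,1)$ at a transposition: there $|r| = (n-3)/(n-1)$, so $\log\frac{1}{|r|} \sim 2/n$, while $\frac{ck}{n}\log d = \frac{2c}{n}\log(n-1)$, and the inequality fails for every fixed $c>0$ once $n$ is large. The correct target has an extra factor of $\log n$ in the denominator, and --- crucially --- the leading constant must be essentially $1$, not merely some positive $c$: the proposition is sharp (it encodes the cutoff location $\frac{n}{k}\log n$), so any argument that throws away a constant factor in the exponent cannot close.

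Relatedly, you have the difficulty backwards. You flag the balanced range for large $k$ as the crux, arguing that the ``generic'' decay $|r|\le n^{-1/2+o(1)}$ is not enough when $k\gg\log n$. But for large $k$ the character ratio on balanced $\lambda$ in fact decays like $e^{-k/2}$ (this is part b.\ of Theorem~\ref{character_ratio_theorem}), which is ample; the paper explicitly notes that large $k$ is the \emph{easy} case and that crude contour bounds suffice once $k\ge 6\log n$. The hard case is small $k$, where one needs the precise asymptotic expansion of part c.\ together with an optimization argument: writing $r=n-\lambda_1$, the paper reduces via a criterion lemma to proving $\log|r_{\text{ratio}}| \le \max\bigl(-\frac{kr}{n}+\frac{kr\log r}{2n\log n}+\frac{ckr}{n\log n},\, -\frac{k}{2}+\frac{ck}{\log n}\bigr)$, paired with the dimension bound $\log d \le r\log n - \frac{r}{2}\log\frac{r}{e}$. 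The first branch is then obtained (for small $k$) by bounding $\sum a_i^k/n^k + \sum b_i^k/n^k$ subject to $\sum a_i+\sum b_i=n$ and $\max a_i = n-r$, an elementary optimization whose solution is $(1-r/n)^{k-1}$ up to lower order. Your sketch contains neither this reduction nor the optimization, and the substitute bound you propose does not hold.
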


  For the deduction of Theorem
\ref{main_theorem} we will use the following technical result on the dimensions
of the irreducible representations of $S_n$.
\begin{proposition}\label{dimension_prop}
 For a sufficiently large fixed constant $c_2>0$, 
\[
 \sum_{\lambda \vdash n} (\chi^\lambda(1))^{-\frac{c_2}{\log n}} = O(1),
\]
with the estimate uniform in $n$.
\end{proposition}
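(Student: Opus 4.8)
The plan is to bound the sum $\sum_{\lambda \vdash n} (\chi^\lambda(1))^{-\frac{c_2}{\log n}}$ by splitting the partitions according to the size of their largest part (equivalently, the longest row of the Young diagram), and to use the classical fact that the dimension $\chi^\lambda(1)$ is large unless $\lambda$ is close to the extremal partitions $(n)$ or $(1^n)$. First I would record a lower bound for $\chi^\lambda(1)$ in terms of $\lambda$; a convenient one is the bound of the form $\chi^\lambda(1) \geq \binom{n}{\lambda_1}^{1/2}$ or, more crudely but sufficiently, the observation that if the Young diagram has first row of length $a = \lambda_1$ and first column of length $b = \lambda_1'$, then $\chi^\lambda(1) \gg \binom{n - 1}{\min(a,b) - 1}$ by branching to a hook, together with the fact that there are at most $2^{O(\sqrt n)}$ partitions in total and that the number of partitions of $n$ with $\lambda_1 = a$ equals the number with $\lambda_1' = a$.

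The key steps, in order, are as follows. Step one: reduce to controlling partitions with $\lambda_1 \leq n/2$ and $\lambda_1' \leq n/2$ separately, using the symmetry $\chi^{\lambda'}(1) = \chi^\lambda(1)$ under conjugation of the diagram, so it suffices to handle $\lambda_1' \leq \lambda_1$, i.e. $\lambda_1 \geq \sqrt n$ say after a trivial estimate on "balanced" diagrams. Step two: for partitions with a long first row, $\lambda_1 = n - j$ with $j$ small, use the exact/asymptotic formula $\chi^\lambda(1) \gg \binom{n}{j} / (\text{poly})$ coming from the hook-length formula — more precisely, removing the first row leaves a partition of $j$, and $\chi^\lambda(1)$ is at least the number of standard Young tableaux forced by the first $n - j$ columns, which grows like $\binom{n}{j}$ up to lower-order factors. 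Step three: since there are at most $p(j) \leq e^{C\sqrt j}$ partitions $\lambda$ with $\lambda_1 = n - j$, the contribution of all such $\lambda$ to the sum is at most
\[
\sum_{j \geq 1} p(j)\, \binom{n}{j}^{-\frac{c_2}{\log n}} \cdot (\text{poly}(n))^{\frac{c_2}{\log n}}.
\]
Since $(\text{poly}(n))^{\frac{c_2}{\log n}} = O(1)$, it remains to show $\sum_{j\geq 1} e^{C\sqrt j}\binom{n}{j}^{-c_2/\log n}$ is $O(1)$. Step four: bound $\binom{n}{j} \geq (n/j)^j$, so $\binom{n}{j}^{-c_2/\log n} \leq \exp\!\left(-\frac{c_2 j}{\log n}\log(n/j)\right)$; for $j \leq \sqrt n$ one has $\log(n/j) \geq \tfrac12 \log n$, giving a bound $\exp(-\tfrac{c_2}{2} j)$, which sums to $O(1)$ once $c_2 > 2C$ (absorbing $e^{C\sqrt j} \leq e^{Cj}$), and for $j > \sqrt n$ one has $\binom{n}{j} \geq \binom{n}{\sqrt n} \geq \exp(c\sqrt n \log n)$ so that term is $\exp(-c'' c_2 \sqrt n)$ and the whole tail $\sum_{j > \sqrt n}$ (at most $p(n) \leq e^{C\sqrt n}$ terms) is still $o(1)$ for $c_2$ large. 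Step five: add the two symmetric halves and the $O(1)$ contribution from near-balanced diagrams to conclude.

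The main obstacle is obtaining a clean enough lower bound for $\chi^\lambda(1)$ that is uniform over \emph{all} $\lambda$ with a given $\lambda_1 = n - j$: the hook-length formula gives $\chi^\lambda(1) = n! / \prod_{\text{cells}} h(\text{cell})$, and while the cells in the first row contribute hooks that are at least $1,2,\dots$ after the first $j$ of them, one must check that the remaining product of hooks (over the sub-diagram of size $j$ and the "overhang" in the first row) does not overwhelm the $\binom{n}{j}$ gain; this is where a slightly careful argument — rather than a one-line estimate — is needed, and it is cleanest to invoke a known uniform bound such as $\chi^\lambda(1) \geq \binom{n-1}{\lambda_1 - 1}$ / $p(n-\lambda_1)$ or the even simpler $\chi^\lambda(1) \ge \binom{n - \lambda_1'}{\lambda_1 - \lambda_1'}$-type inequality and then feed it into Steps three–four. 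Everything else is a routine convergence estimate for a series in $j$, made to converge by choosing the fixed constant $c_2$ sufficiently large relative to the absolute constant $C$ in $p(j) \le e^{C\sqrt j}$.
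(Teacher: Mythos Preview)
Your strategy is genuinely different from the paper's, and the outline is reasonable, but it has a real gap at exactly the point you flag as ``the main obstacle.''  The paper does \emph{not} argue via a lower bound of the shape $f^\lambda \gtrsim \binom{n}{j}$ with $j=n-\lambda_1$; instead it invokes the Larsen--Shalev approximate dimension formula $\log f^\lambda=(1+o(1))\log D(\lambda)$ with $D(\lambda)=(n-1)!/\prod_i a_i'!\,b_i'!$ in Frobenius coordinates, reduces to bounding $\sum_\lambda D(\lambda)^{-c/\log n}$, and then performs a case split on the parameter $\Delta$ (the number of boxes outside the first row and the diagonal square), using Stirling and the partition bound $p(\Delta)\le e^{O(\sqrt\Delta)}$ inside that framework.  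So your route---row-length stratification plus an elementary dimension lower bound---is a different and more elementary-looking plan, which is a feature; but the elementary dimension lower bound you need is not supplied.

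Concretely: for $\lambda_1\ge n/2$ one can indeed prove $f^\lambda\ge \binom{n}{j}/\mathrm{poly}(n)$ by the row-removal identity $f^\lambda=\binom{n}{\lambda_1}\prod_{i\ge2}\frac{\lambda_1-\lambda_i+i-1}{\lambda_1+i-1}\,f^{\lambda\setminus\lambda_1}$ together with the observation that $\lambda_i\le j/(i-1)$, which makes the product at least $n^{-O(1)}$.  But once $\max(\lambda_1,\lambda_1')<n/2$ this argument no longer gives anything, your suggested inequalities $f^\lambda\ge \binom{n-1}{\lambda_1-1}/p(n-\lambda_1)$ and $f^\lambda\ge \binom{n-\lambda_1'}{\lambda_1-\lambda_1'}$ are not standard theorems (nor do you indicate how to prove them), and the ``trivial estimate on balanced diagrams'' you allude to is not trivial: the crude hook bound $\prod h\le (\lambda_1+\lambda_1'-1)^n$ only yields exponential growth of $f^\lambda$ when $\lambda_1+\lambda_1'-1<n/e$, leaving the whole range $n/e\lesssim \max(\lambda_1,\lambda_1')<n/2$ uncovered.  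Your Step~4 tacitly assumes $f^\lambda\ge \binom{n}{\sqrt n}$ for every $\lambda$ with $j>\sqrt n$, and that is precisely what remains to be shown in this balanced regime.  The cleanest known way to get the needed exponential lower bound there is, in fact, the Larsen--Shalev formula the paper uses; so as written your proposal defers the one hard step to an unproven (and non-standard) black box.
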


\begin{proof}[Deduction of mixing time upper bound of Theorem \ref{main_theorem}]
 We use the fact that, apart from the trivial and sign representations, the lowest dimensional irreducible representation of $S_n$ has dimension $n-1$ (so $S_n$ is essentially quasi-random). Let $c_1, c_2$ as as above, and let $C>0$.  Then for $t > \frac{n}{k_n}(\log n + c_1)(1 + \frac{c_2 + C}{2\log n})$ the application of Cauchy-Schwarz above gives
 \[
  \|\mu_C^{*t}-U_t\|_{T.V.}^2 \leq \frac{1}{4} \sum_{\lambda \vdash n,
\lambda
\neq n, 1^n} (\chi^\lambda(1))^{- \frac{c_2 + C}{\log n}} = O(e^{-C}).
 \]
\end{proof}

The proof of Proposition \ref{char_ratio_prop} will require some more detailed
information regarding the characters of $S_n$ evaluated at a cycle.  We discuss
this in the next section.  Proposition \ref{dimension_prop} is deduced from a
very useful approximate dimension formula of Larsen-Shalev
\cite{larsen_shalev}.  The rather technical proof of this proposition is given
in Appendix \ref{dimension_bound_appendix}.

\section{Character theory of $S_n$}\label{S_n_char_theory_section}

The irreducible representations of $S_n$ are indexed by partitions of $n$. 
Given a partition \[\lambda \vdash n,\qquad \lambda = (\lambda_1 \geq \lambda_2
\geq... \geq \lambda_n \geq 0),\qquad \sum \lambda_i = n,\] the dual partition
$\lambda'$ is found by reflecting the diagram of $\lambda$ along its diagonal.
We write $\chi^\lambda$ for the character of representation $\rho^\lambda$, and
$f^\lambda = \chi^\lambda(1)$ for the dimension.  Let $\mu = \lambda + (n-1,
n-2, ..., 0)$.  Then the dimension is given by (see e.g. \cite{macdonald})
\begin{equation}\label{dimension_formula}
 f^\lambda = \frac{n!}{\mu_1!\mu_2!...\mu_n!}\prod_{i < j} (\mu_i - \mu_j).
\end{equation}

Setting apart those terms that pertain to $\lambda_1$, we find
\[ 
 f^\lambda = \binom{n}{\lambda_1}\prod_{j=2}^n \frac{\lambda_1-\lambda_j +
j-1}{\lambda_1 + j-1} f^{\lambda-\lambda_1}.
\]
The product is bounded by 1, and $\sum_{\lambda \vdash n}
\left(f^\lambda\right)^2 = n!$,
so that we have the bound employed by Diaconis-Shahshahani
\begin{equation}\label{diaconis_shahshahni_dim_bound}
 f^\lambda \leq \binom{n}{\lambda_1}\sqrt{(n-\lambda_1)!}.
\end{equation}

The trivial representation is $\rho^n$ while the sign representation is
$\rho^{1^n}$, both one-dimensional.  $\rho^{n-1,1}$ corresponds to the  standard representation, 
which is the irreducible $(n-1)$-dimensional sub-representation of the representation $\rho$ in $\bR^n$ given by
\[
\rho(\sigma) \cdot e_i = e_{\sigma(i)}, \qquad \sigma \in S_n.
\] This representation and its dual  are the  lowest dimensional non-trivial irreducible representations of $S_n$.

Given the character $\chi^\lambda$, the
character of the dual representation is given by \[\chi^{\lambda'}(\sigma) =
\sgn(\sigma)\cdot\chi^\lambda(\sigma).\]  The characters corresponding to
partitions with long first piece have relatively simple interpretations.  For
instance, if we write $i_1$ for the number of fixed points and $i_2$ for the
number of 2-cycles in permutation $\sigma$ then (\cite{fulton_harris}, ex. 4.15)
\begin{align}\notag
\chi^{n-1,1}(\sigma)& = i_1 -1, \\ \label{small_chars} \chi^{n-2,1,1}(\sigma) &=
\frac{1}{2}(i_1-1)(i_1-2) - i_2,\\ \notag \chi^{n-2,2}(\sigma) &=
\frac{1}{2}(i_1-1)(i_1-2) + i_2-1.
 \end{align}
It is now immediate that
\begin{equation}\label{tensor_decomp}
  (\chi^{n-1,1})^2 = \chi^{n-2,2} + \chi^{n-2, 1, 1} + \chi^{n-1,1} + \chi^{n}.
\end{equation}
Also,
\[
 f^n = 1, \;\; f^{n-1,1} = n-1, \;\; f^{n-2,1,1} = \binom{n-1}{2}, \;\;
f^{n-2,2} = \binom{n-1}{2} - 1.
\]
We  use these formulas in the proof of the lower bound of Conjecture
\ref{main_conjecture}.

 Many properties of partitions are most readily evident in Frobenius
notation, and we will use this notation to state our main technical theorem. 
In Frobenius notation we identify the partion $\lambda \vdash n$
by
drawing the diagonal, say of length $m$, and measuring the legs that extend
horizontally to the right and vertically below the diagonal:
\[ \lambda = (a_1, a_2, ..., a_m| b_1, b_2, ...,  b_m);\]  \[a_ i  = \lambda _i
- i
+
\frac{1}{2},\qquad b_i = \lambda_i' -i + \frac{1}{2}.\]  
 Notice that 
\[
 \sum_{i =1}^m a_i + \sum_{i=1}^m b_i = n.
\]
In Appendix \ref{dimension_bound_appendix} we consider the quantity $\Delta$,
which is the number of boxes contained neither in the square formed by the
diagonal nor in the first row.  
The notation used here is summarized in Figure \ref{frobenius_figure}.  
\begin{figure}                                         
                        
\caption{Partition $\lambda$ in Frobenius
notation.}\label{frobenius_figure} 
\centering
\includegraphics[width = 0.7 \textwidth]{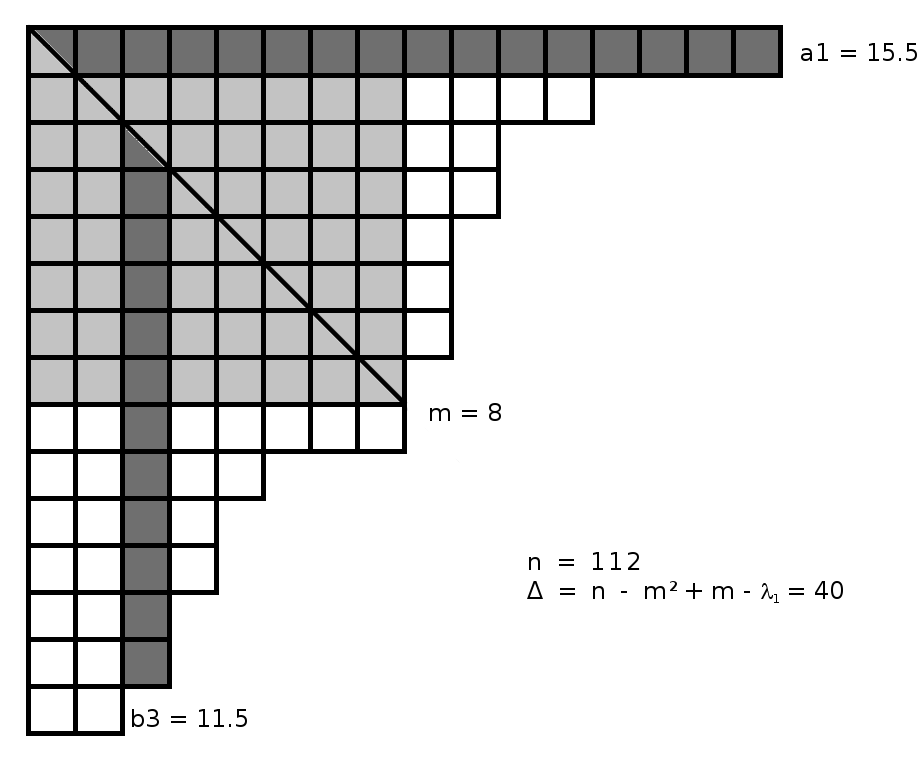}
\end{figure}

  We  now state our asymptotic evaluation of the character ratio. 
\begin{theorem}\label{character_ratio_theorem}
 Let $n$ be large, let $2 \leq k
 \leq n$, and let $C$ be the class of $k$ cycles on $S_n$. Let
$\lambda \vdash
n$ be a
partition of $n$ with Frobenius notation $\lambda = (a_1, ..., a_m|b_1, ...,
b_m)$. 
\begin{enumerate}
\item[a.] (Long first row) Let $0 < \epsilon < \frac{1}{2}$, let $r = n-\lambda_1$ and suppose
that $r + k + 1 < (\frac{1}{2} - \epsilon)n$.  Then 
\begin{align}\label{char_ratio_asym_large}
\frac{\chi^\lambda(C)}{f^\lambda} &= \frac{(a_1 -
\frac{1}{2})^{\underline{k}}}{n^{\underline{k}}} \prod_{j=2}^m \frac{a_1 -a_j -
k}{a_1 - a_j} \prod_{j=1}^m \frac{a_1 + b_j}{a_1 + b_j-k} \\\notag&+
O_\epsilon\left(\exp\left(k \left[ \log \frac{(1 + \epsilon)(k+1+r)}{n-k} +
O_\epsilon\left(r^{-\frac{1}{2}}\right)\right]\right)\right) .\end{align}  If
$r < k$ 
then the
error term is actually 0.
\item[b.] (Large $k$, short first row and column)
Let $\theta > \frac{2}{3}$.  There exists $\epsilon(\theta)>0$ such that,
for all $n$ sufficiently large, for all $k$ with $6\log n \leq k \leq
\epsilon n$ and for all $\lambda \vdash n$ such that $b_1 \leq a_1 \leq
e^{-\theta}n$,
\[
 \left|\frac{\chi^\lambda(C)}{f^\lambda}\right| \leq e^{\frac{-k}{2}}.
\]
\item[c.]  (Asymptotic expansion) Let $0<\epsilon < \frac{1}{2}$ and suppose now that $k <
n^{\frac{1}{2}-\epsilon}$.
We
have the approximate formula
\begin{align}\label{char_ratio_asym_small}
 \frac{\chi^\lambda(C)}{f^\lambda} = &\sum_{a_i > kn^{\frac{1}{2}}}
\frac{a_i^k}{n^k}\left(1 +
O_\epsilon\left(\frac{kn^{\frac{3}{4}+\epsilon}}{a_i}\right)\right)\\&\notag
\;\;+
(-1)^{k-1}\sum_{ b_i > kn^{\frac{1}{2} }} \frac{b_i^k}{n^k}\left(1 +
O_\epsilon\left(\frac{kn^{\frac{3}{4}+\epsilon}}{b_i}\right)\right) \\ \notag
&\;\;+
O_\epsilon\left(n^{\frac{1}{2}}(\log n)^2 \left(\frac{k \log^2
n}{\sqrt{n}}\right)^k\right).
\end{align}
\end{enumerate}
\end{theorem}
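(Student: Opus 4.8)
The proof is built on Frobenius's contour-integral formula, which expresses $\chi^\lambda(C)/f^\lambda$ at a $k$-cycle as $\frac{1}{2\pi i}\oint_\Gamma F_\lambda(z)\,dz$ for an explicit rational function $F_\lambda$ attached to $k$ and to the Frobenius coordinates of $\lambda$. The poles of $F_\lambda$ sit at the shifted content parameters $z=a_j$ and $z=k-b_j$, its zeros at the unshifted ones $z=a_j+k$ and $z=-b_j$, the contour $\Gamma$ encircles all poles, and the residue at $z=a_j$ is
\[
\Res_{z=a_j}F_\lambda=\frac{(a_j-\tfrac12)^{\underline k}}{n^{\underline k}}\prod_{i\neq j}\frac{a_j-a_i-k}{a_j-a_i}\prod_i\frac{a_j+b_i}{a_j+b_i-k},
\]
with a companion expression (carrying the sign $(-1)^{k-1}$, reflecting $\chi^{\lambda'}=\sgn\cdot\chi^\lambda$ and the interchange $a\leftrightarrow b$) at $z=k-b_j$. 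In every one of the three regimes the plan is the same: deform $\Gamma$ onto a circle $|z|=\rho$ of a well-chosen radius, so that $\chi^\lambda(C)/f^\lambda$ becomes the sum of the residues at the poles left outside the circle plus the integral of $F_\lambda$ over $|z|=\rho$; one then estimates that integral by bounding $|F_\lambda|$ on the circle. The leverage for all the bounds is the identity $\sum_j a_j+\sum_j b_j=n$, which both confines the bulk of the poles and limits the number of large ones, together with hook-length-type identities that let one evaluate or bound the product factors of $F_\lambda$ on the contour.

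For part (a), $\lambda_1$ is long, so $a_1=\lambda_1-\tfrac12$ lies far to the right while every other pole --- the $a_j$ with $j\ge 2$ and all the $k-b_j$ --- lies in $|z|\ll k+r$, because $\lambda$ has at most $r+1$ nonzero parts when $\lambda_1\ge n-r$. I would take $\rho=(1+\epsilon)(k+1+r)$; the hypothesis $r+k+1<(\tfrac12-\epsilon)n$ makes this circle enclose every pole but $z=a_1$, so $\chi^\lambda(C)/f^\lambda=\Res_{z=a_1}F_\lambda+\frac{1}{2\pi i}\oint_{|z|=\rho}F_\lambda$ and the residue is exactly the displayed main term. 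On the circle the falling-factorial part is bounded by $|(z-\tfrac12)^{\underline k}/n^{\underline k}|\le(\rho/(n-k))^k=\exp\!\big(k\log\tfrac{(1+\epsilon)(k+1+r)}{n-k}\big)$, which accounts for the leading exponential in the error term; the product factors --- which involve only the $O(r)$ poles confined to $|z|\ll k+r$ --- must then be compared, via Stirling applied to their constituent falling factorials, to the corresponding product at $z=a_1$, and the resulting discrepancy, together with the contour length, is what produces the correction $\exp(O_\epsilon(k r^{-1/2}))$. When $r<k$ every $a_j$ with $j\ge 2$ and every $b_j$ is $<k$, so each of the falling factorials $(a_j-\tfrac12)^{\underline k}$ and $(k-b_j-\tfrac12)^{\underline k}$ contains a zero factor; hence all residues except the one at $a_1$ vanish and the correction is identically $0$.

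For part (b) both the first row and the first column are short, $b_1\le a_1\le e^{-\theta}n$, so every pole lies in $|z|\le e^{-\theta}n+k$ and, after choosing $\rho$ slightly larger, a crude bound for $F_\lambda$ on $|z|=\rho$ will do: the falling-factorial part is at most $\big((e^{-\theta}n+k)/(n-k)\big)^k\le e^{-\theta'k}$ with $\theta'$ still exceeding $\tfrac23$ once $\epsilon(\theta)$ is taken small enough, while the product factors are controlled (again using the confinement and distinctness of the poles, and hook-length identities) by a factor that is only polynomial in $n$; since $k\ge 6\log n$ that polynomial is at most $e^{k/6}$, and $\theta'>\tfrac23=\tfrac12+\tfrac16$ leaves room to conclude $|\chi^\lambda(C)/f^\lambda|\le e^{-k/2}$. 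For part (c), where $k<n^{1/2-\epsilon}$, I would instead deform $\Gamma$ onto $|z|=\rho$ with $\rho\asymp kn^{1/2}$, chosen by pigeonhole (the poles being $1$-separated half-integers) to avoid all poles; this encloses exactly the poles of magnitude $\ll kn^{1/2}$, so $\chi^\lambda(C)/f^\lambda$ equals $\sum_{a_i>kn^{1/2}}\Res_{z=a_i}F_\lambda+(-1)^{k-1}\sum_{b_i>kn^{1/2}}\Res_{z=k-b_i}F_\lambda$ plus the integral over $|z|=\rho$. For a large pole one Taylor-expands: using $a_i>kn^{1/2}$, $k<n^{1/2-\epsilon}$, and that at most $O(\sqrt n)$ poles are large, each product factor is $1+O(k/(a_i-a_j))$ and $(a_i-\tfrac12)^{\underline k}=a_i^k(1+O(k^2/a_i))$, giving $\Res_{z=a_i}F_\lambda=\frac{a_i^k}{n^k}\big(1+O_\epsilon(kn^{3/4+\epsilon}/a_i)\big)$ and likewise, with the extra $(-1)^{k-1}$, for the $b$-poles; the residual integral is bounded crudely by $(kn^{1/2}/n)^k$ times polynomial-in-$n$ and $(\log n)^{O(k)}$ losses from the poles near $|z|=\rho$, which is the stated final error.

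The step I expect to be the main obstacle is the uniform estimation of the two product factors of $F_\lambda$ on each of these circles. On a circle the naive pole-by-pole bound is far too lossy --- clusters of nearby poles would swamp it --- so one must extract genuine cancellation, using that the poles and zeros of $F_\lambda$ are the shifted and unshifted content parameters of $\lambda$ and are therefore governed by hook-length identities of the same type that underlie the dimension formula~\eqref{dimension_formula} and the Diaconis--Shahshahani bound~\eqref{diaconis_shahshahni_dim_bound}. Getting this estimate uniform in $\lambda$, in $k$, and in $n$ is what dictates the precise shapes of the corrections $\exp(O_\epsilon(k r^{-1/2}))$ and $O_\epsilon(kn^{3/4+\epsilon}/a_i)$ and the threshold $\theta>\tfrac23$ in part (b), and is where essentially all of the work lies.
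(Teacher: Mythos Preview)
Your skeleton for parts (a) and (b) is essentially that of the paper, though some details need correction. In part (a) you say the remaining poles number $O(r)$; in fact the diagonal of $\lambda\setminus\lambda_1$ has length at most $\sqrt{r}$, so there are only $O(\sqrt{r})$ of them, and this is exactly what produces the $r^{-1/2}$ in the error. With $O(r)$ factors you would get only $\exp(O_\epsilon(k))$, which is useless. The paper's device here is cleaner than a direct estimate: it rewrites the remaining integral as $\frac{r^{\underline{k}}}{n^{\underline{k}}}$ times a character ratio on $S_r$ (trivially $\le 1$) plus a genuinely small remainder. Your invocation of ``Stirling applied to falling factorials'' and ``hook-length-type identities'' is a red herring; neither is used. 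In part (b) your accounting is slightly off but harmless: the paper bounds the product factors by $\exp(O(k/\sqrt n))=\exp(k\cdot o(1))$ (not merely ``polynomial in $n$''), using only that there are $\le\sqrt n$ factors each of size $1+O(k/n)$ on the circle; the $e^{k/6}$ absorbs only the contour length.

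The real gap is in part (c). Your plan is to deform to a single circle $|z|\asymp k\sqrt n$, extract the residues at the large poles, and Taylor-expand each residue to $(a_i/n)^k(1+O(kn^{3/4+\epsilon}/a_i))$. This fails: individual residues are \emph{not} close to $(a_i/n)^k$. The residue at $a_i$ carries the full product $\prod_{j\ne i}\frac{a_i-a_j-k}{a_i-a_j}\prod_j\frac{a_i+b_j}{a_i+b_j-k}$, and when several $a_j$ cluster near $a_i$ these factors are individually of size $\asymp k$; for instance if $a_1,\dots,a_s$ are consecutive half-integers then $\prod_{j=2}^{s}\frac{a_1-a_j-k}{a_1-a_j}=(-1)^{s-1}\binom{k-1}{s-1}$, which can be as large as $2^{k-1}$. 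This is precisely the instability the paper flags in the introduction (``nearby residues of the function are unstable''). The paper therefore \emph{never} computes individual residues in this regime. Instead it partitions the real line into intervals $I_j$ of width $\asymp\sqrt n$ with endpoints $y_j$ chosen (via a pigeonhole argument) to be well-spaced from $\mathcal P$, draws a small box $\mathcal B_j$ around the $q_j$ poles in each $I_j$, and shows that on $\partial\mathcal B_j$ one may replace $F_k^{a,b}(z)$ by the constant $(x_j+\tfrac{k-1}{2})^{\underline{k}}$ times the short product $\prod_{p\in\mathcal P\cap\mathcal B_j}\frac{z-p-k}{z-p}$, up to a controlled error. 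The key is then the elementary exact evaluation $\frac{1}{2\pi i}\oint\prod_{p}\frac{z-p-k}{z-p}\,dz=-kq_j$, which gives the collective contribution of the $q_j$ poles in $\mathcal B_j$ as $q_j\,(x_j/n)^k$ up to error --- this is the ``amortized'' contribution of each pole, and it is what the statement of part (c) actually asserts (a bound on the sum, not on each residue). A single circle cannot see this cancellation, and no hook-length identity will rescue the termwise expansion.
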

This Theorem is the main technical result of the paper.  Actually it gives
 more than we will need, and we apply the detailed statement
of part c. only in the case when the cycle length $k$ is relatively short, $k
\leq 6 \log n$. The cruder bounds of parts a. and b. suffice when the cycle
length $k$ is larger.

\section{Proof of Theorem \ref{character_ratio_theorem}}

When $C$ is the class of $k$ cycles Frobenius \cite{frobenius} (see
\cite{fulton_harris}, p.52
ex. 4.17  b) proved a famous formula that expresses the character ratio of a
given representation at
$C$ as the `residue at $\infty$' of a meromorphic function depending
upon the representation. In Frobenius notation, and using the
falling power  \[z^{\underline{r}} =z(z-1)...(z-r+1),\] the
formula is
\begin{align}\label{contour_short_form}
  \frac{\chi^\lambda(C)}{f^\lambda}
&=\frac{-1}{kn^{\underline{k}}}\frac{1}{2\pi i}\oint F_k^{a,b}(z)
dz, \quad
F_k^{a,b}(z) =  \left(z+
\frac{k-1}{2}\right)^{\underline{k}}F^a_k(z)F^b_k(z)\\ & F^a_k(z) =
\prod_{j=1}^m \frac{z-a_j-\frac{k}{2}}{z-a_j+\frac{k}{2}} , \quad F^b_k(z) =
\prod_{j=1}^m
\frac{z + b_j + \frac{k}{2}}{z+b_j-\frac{k}{2}}\notag
\end{align}
where the integration has winding number 1
around each (finite) pole of the integrand.  [Note that our $(a_i, b_i)$
correspond
to $(b_{m-i+1} + \frac{1}{2}, a_{m-i + 1} + \frac{1}{2})$ of
\cite{fulton_harris}, and replace $y$ there with our $z = y -
\frac{k-1}{2}$.]

Our proof of Theorem \ref{character_ratio_theorem} is an asymptotic
estimation of this integral. We first record several properties of $F_k^{a,b}$
in the following lemma.
\begin{lemma}\label{F_properties}
 Let $\lambda = (a_1, ..., a_m|b_1, ..., b_m)$ be a partition of $n$ in
Frobenius notation.  
\begin{enumerate}
 \item Each of $F_k^{a}(z)$ and $F_k^b(z)$ has at most $\sqrt{n}$ poles.
 \item If $k > n$ then $F_k^{a,b}(z)$ is holomorphic.
 \end{enumerate}
Denote by $\lambda' = \lambda \setminus \lambda_1 = (a_1',
..., a_{m'}'|b_1', ..., b_{m'}')$ the partition of $n-\lambda_1$ found
by deleting the first row of $\lambda$.  
 \begin{enumerate}
 \item[(3)]  We have 
 \[
  F_k^{a,b}(z) \frac{z- a_1 +\frac{k}{2}}{z-a_1 - \frac{k}{2}}
= F_k^{a',b'}(z+1).
 \]
\item[(4)] If $a_1 > n-k - \frac{1}{2}$ and if $n \geq 2k$ then $F_k^{a,b}(z)$
has a
single
simple pole at $z= a_1 - \frac{k}{2}$.
\end{enumerate}

\end{lemma}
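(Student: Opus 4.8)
The plan is to establish the five assertions in order, each by direct inspection of the explicit product formula \eqref{contour_short_form} for $F_k^{a,b}$, $F_k^a$, and $F_k^b$. For (1): the factor $F_k^a(z) = \prod_{j=1}^m \frac{z-a_j-k/2}{z-a_j+k/2}$ manifestly has at most $m$ poles, located at the points $z = a_j - k/2$; since $\sum_i a_i + \sum_i b_i = n$ and the $a_i$ are strictly decreasing positive half-integers, one has $\sum_{i=1}^m a_i \geq 1 + 3 + \cdots + (2m-1) = m^2$ (the $a_i$ lying in $\frac12 + \bN$), hence $m \leq \sqrt{n}$; the same bound applies to $F_k^b$ via the $b_j$. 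For (2): if $k > n$ then in particular $k > 2a_1$ (since $a_1 < n$) and $k > 2b_1$, so each putative pole $z = a_j - k/2$ of $F_k^a$ is cancelled — one checks that the pole at $a_j - k/2$ coincides with a zero $z = a_i + k/2$ of $F_k^a$ precisely when $a_i - a_j = -k$, which cannot happen with $|a_i - a_j| < n < k$; the genuine cancellation comes instead from the falling-power prefactor $(z + \frac{k-1}{2})^{\underline{k}} = (z+\frac{k-1}{2})(z+\frac{k-3}{2})\cdots(z-\frac{k-1}{2})$, whose $k$ zeros are the half-integers in $[-\frac{k-1}{2}, \frac{k-1}{2}]$, and when $k > n$ this range contains every point $a_j - k/2$ and $-b_j + k/2$, killing all poles. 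I will write this out as: the pole of $F_k^{a,b}$ at $a_j - k/2$ (resp. $-b_j + k/2$) is simple and is cancelled by the prefactor zero at that same point whenever $|a_j - k/2| \leq \frac{k-1}{2}$, i.e. whenever $0 \leq a_j \leq k - \frac12$, which holds for all $j$ once $k > n \geq a_1$.

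For (3): this is a bookkeeping identity. Deleting the first row of $\lambda$ removes the box on the diagonal in row $1$ and shortens the first column by one, so in Frobenius coordinates the arms become $a_i' = a_{i+1}$ and the legs become $b_i' = b_{i+1} - 1$ for $i = 1, \ldots, m-1$ (with $m' = m-1$); one verifies $\sum a_i' + \sum b_i' = n - \lambda_1$. Substituting into $F_k^{a',b'}(z+1)$ and comparing factor by factor with $F_k^{a,b}(z)$: the prefactor is unchanged since $(z+1 + \frac{k-1}{2})^{\underline k}$ evaluated with the shift matches after reindexing, the $a'$-product over $j = 1, \ldots, m-1$ equals the $a$-product over $j = 2, \ldots, m$ (the $j=1$ factor $\frac{z - a_1 - k/2}{z - a_1 + k/2}$ being exactly the factor multiplied on the left-hand side, here with $z \mapsto z$), and the $b'$-product with argument $z+1$ and legs $b_{i+1} - 1$ reproduces the $b$-product over $j = 2, \ldots, m$; the surviving $j = 1$ leg factor $\frac{z + b_1 + k/2}{z + b_1 - k/2}$ must then be checked to agree, which it does because for $\lambda$ with first row $\lambda_1$ one has the relation $b_1 = \lambda_1' - \frac12$ while after deletion the first column drops, and the arithmetic works out. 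I will present this computation compactly.

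For (4): suppose $a_1 > n - k - \frac12$ and $n \geq 2k$. Using (3) repeatedly, or directly: the poles of $F_k^{a,b}$ are among $\{a_j - k/2\}_{j=1}^m \cup \{-b_j + k/2\}_{j=1}^m$, and a point $a_j - k/2$ survives (is not cancelled by the prefactor) only if $a_j - k/2 > \frac{k-1}{2}$ or $a_j - k/2 < -\frac{k-1}{2}$, i.e. $a_j > k - \frac12$; similarly $-b_j + k/2$ survives only if $b_j > k - \frac12$. Now $\sum_{j \geq 2} a_j + \sum_j b_j = n - a_1 < k + \frac12$, so every $a_j$ with $j \geq 2$ and every $b_j$ satisfies $a_j, b_j < k + \frac12$; in fact since these are half-integers the bound $a_j, b_j \leq k - \frac12$ holds unless one of them equals exactly $k - \frac12$ or ... — I will argue that $n \geq 2k$ forces $a_1 - k/2 > n - k - \frac12 - k/2 = n - \frac{3k}{2} - \frac12 \geq k/2 - \frac12 > \frac{k-1}{2}$ only if ... \emph{[adjust constants]}, hence $z = a_1 - k/2$ is a genuine simple pole, and conversely all other candidate poles are cancelled because their coordinates lie in $[-\frac{k-1}{2}, \frac{k-1}{2}]$. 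One also checks the pole at $a_1 - k/2$ is not cancelled by a zero of $F_k^a$ or $F_k^b$: a zero of $F_k^a$ sits at $a_i + k/2 > a_1 - k/2$ for all $i$ when $k \geq 1$ unless $a_i = a_1 - k$, impossible; a zero of $F_k^b$ at $-b_i - k/2 < 0 < a_1 - k/2$. So the pole is simple and unique.

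The main obstacle will be (4), specifically pinning down the constant: the hypotheses $a_1 > n - k - \frac12$ and $n \geq 2k$ are tight enough that one must be careful that no second pole sneaks in among the $a_j$ or $-b_j$ with $j \geq 2$, and that the prefactor $(z + \frac{k-1}{2})^{\underline k}$ really does vanish at every such point — this amounts to the elementary but fiddly verification that $0 \le a_j \le k - \tfrac12$ and $0 \le b_j \le k-\tfrac12$ for all the remaining indices, using $\sum_{j\ge 2} a_j + \sum_j b_j < k + \tfrac12$ together with positivity and the half-integer spacing. Everything else is routine manipulation of the product formula. \emph{(I will double-check the precise inequalities when writing the final proof; the structure above is the skeleton.)}
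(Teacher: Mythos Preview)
Your treatment of part (3) contains a genuine error in the combinatorics. When the first row of $\lambda$ is deleted, the diagonal of the Young diagram shifts down one square, and the correct transformation of Frobenius coordinates is $a_i' = a_{i+1} + 1$ (not $a_{i+1}$) and $b_i' = b_i - 1$ (not $b_{i+1} - 1$), at least in the case $b_m = \tfrac12$, where $m' = m-1$; there is a second case $b_m > \tfrac12$ (equivalently $a_m = \tfrac12$) in which $m' = m$ and a new bottom arm $a_m' = \tfrac12$ appears. You can check your formulas fail on $\lambda = (3,2,1)$: there $(a_1,a_2|b_1,b_2) = (\tfrac52,\tfrac12|\tfrac52,\tfrac12)$, and deleting the first row yields $(2,1)$ with $(a_1'|b_1') = (\tfrac32|\tfrac32) = (a_2+1|b_1-1)$. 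Consequently your claim that ``the prefactor is unchanged'' is also false: the falling factorial genuinely shifts from $(z+\tfrac{k-1}{2})^{\underline{k}}$ to $(z+\tfrac{k+1}{2})^{\underline{k}}$, and this shift is exactly compensated by the factor $\frac{z+(k+1)/2}{z-(k-1)/2}$ arising from the lost $b_m = \tfrac12$ term (or, in the other case, from the gained $a_m' = \tfrac12$ term). The identity is indeed bookkeeping, but it does not survive the wrong bookkeeping.

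For (4), your direct attack can be pushed through but is more laborious than necessary and leaves gaps: in particular you must also verify that for $j \geq 2$ no $a$-pole $a_j - k/2$ collides with a $b$-pole $k/2 - b_i$ (creating a double pole that the prefactor's simple zero would not kill). The paper instead uses (2) and (3): since $n - \lambda_1 < k$, part (2) gives that $F_k^{a',b'}$ is holomorphic, so by (3) the only possible pole of $F_k^{a,b}$ is at $z = a_1 - k/2$, and one checks it survives because $a_1 > n - k - \tfrac12 \geq k - \tfrac12$. Parts (1) and (2) are essentially fine, with two small slips: in (1) your lower bound $\sum a_i \geq m^2$ is off by a factor of two (the $a_i$ are half-integers, so the minimum is $m^2/2$), though $\sum a_i + \sum b_i = n$ still gives $m^2 \leq n$; and in (2) you should record explicitly that $k > n$ forces $a_i + b_j < k$, so $a_i - k/2 \neq k/2 - b_j$ and all poles of $F_k^a F_k^b$ are simple before the prefactor cancels them.
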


\begin{proof}
 The first item follows from the bound for the diagonal $m \leq \sqrt{n}$, since
both $F_k^a$
and $F_k^b$ are products of $m$ terms.
 
 For 2., first observe that $a_i$ and $b_i$ are strictly decreasing so that the
poles of $F_k^a(z)$ are all simple, as are the poles of $F_k^b(z)$.  Since $k >
n$, $a_i + b_j - k < 0$, so $a_i - \frac{k}{2} \neq \frac{k}{2} - b_j$.  Thus
the poles of $F_k^a(z)F_k^b(z)$ are all simple, and are all cancelled by the
factor of $\left(z + \frac{k-1}{2}\right)^{\underline{k}}$.
 
 For 3., observe that deleting the first row of the diagram for
$\lambda$ shifts the diagonal down one square.  Thus, if
$b_m = \frac{1}{2}$ then $m' = m-1$ and 
 \[
 \forall\; 1 \leq i \leq m',\quad a_{i}' = a_{i+1} + 1, \quad b_i' = b_i-1.
 \]
 In this case, accounting for the lost factor from $b_m$,
 \begin{equation}\label{factor_drop}
  F^a_k(z)F^b_k(z)\cdot \frac{z - a_1 + \frac{k}{2}}{z-a_1 - \frac{k}{2}} =
F^{a'}_k(z+1)F^{b'}_k(z+1) \frac{z +\frac{k+1}{2}}{z -\frac{k-1}{2}}.
 \end{equation}
 If instead $b_m > \frac{1}{2}$ then $a_m = \frac{1}{2}$, which implies $m' =
m$, and 
 \[
  \forall\; 1 \leq i \leq m-1, \quad a_{i}' = a_{i+1} + 1,\; a_m' = \frac{1}{2},
\quad \forall\; 1 \leq j \leq m, \quad  b_j' = b_j-1.
 \]
Thus (\ref{factor_drop}) still holds, the new factor accounting for the
introduction of $a_{m}' = \frac{1}{2}$. Thus, in either case,
\begin{align*}
 F_k^{a,b}(z)&\cdot  \frac{z - a_1 + \frac{k}{2}}{z-a_1 - \frac{k}{2}} =
\left(z + \frac{k-1}{2}\right)^{\underline{k}}
F^{a'}_k(z+1)F^{b'}_k(z+1) \frac{z +\frac{k+1}{2}}{z -\frac{k-1}{2}}
\\ &=
\left(z+\frac{k+1}{2}\right)^{\underline{k}} F^{a'}_k(z+1)F^{b'}_k(z+1) \\&=
 F_k^{a',b'}(z+1).
\end{align*}

For 4., the condition $a_1 > n-k - \frac{1}{2}$ and $n
\geq 2k$ implies that $a_1 > k -\frac{1}{2}$.  Thus, for all $i$,  $a_1 + b_i >
k$, so that $a_1 - \frac{k}{2} \neq \frac{k}{2} - b_i$, and $a_1 -\frac{k}{2}$
is a simple pole of $F_k^a(z)F_k^b(z)$. Furthermore, it is not cancelled by
$\left(z+\frac{k-1}{2}\right)^{\underline{k}}$, so that $a_1 - \frac{k}{2}$ is a
simple pole of $F_k^{a,b}(z)$.  It is the only pole, since $F_k^{a',b'}(z)$ has
$n-\lambda_1 < k$, hence is holomorphic.
\end{proof}

Parts a. and b. of Theorem \ref{character_ratio_theorem} are more easily
proven. We
give these proofs immediately, and then prove several lemmas before proving part
c.

\begin{proof}[Proof of Theorem \ref{character_ratio_theorem} part a.] In this part, the first row $\lambda_1$ of partition $\lambda$ is significantly larger than the remainder of the partition, of size $r$.  We extract a residue contribution from the pole corresponding to $\lambda_1$, and bound the remainder of the character ratio by approximating it with a character ratio on $S_r$.

Observe that $\sum_{i>1} a_i + \sum_i b_i  =n-a_1 =  r + \frac{1}{2}$. Recall
that we assume $r + k + 1 \leq \left(\frac{1}{2}-\epsilon\right)n$, and that
 this part of the theorem is the asymptotic
\begin{align*}
\frac{\chi^\lambda(C)}{f^\lambda} &= \frac{(a_1 -
\frac{1}{2})^{\underline{k}}}{n^{\underline{k}}} \prod_{j=2}^m \frac{a_1 -a_j -
k}{a_1 - a_j} \prod_{j=1}^m \frac{a_1 + b_j}{a_1 + b_j-k} \\\notag&+
O_\epsilon\left(\exp\left(k \left[ \log \frac{(1 + \epsilon)(k+1+r)}{n-k} +
O_\epsilon\left(r^{-\frac{1}{2}}\right)\right]\right)\right),\end{align*}
with the error equal to 0 if $r <k$.

The poles of $F_k^{a,b}$ are among the points $a_i - \frac{k}{2}$, $\frac{k}{2}
- b_i$, $i = 1, ..., k$, some of which may be cancelled by the numerator.
The condition $k+ 1 + r \leq \left(\frac{1}{2}-\epsilon\right) n$
guarantees that \[a_1 - \frac{k}{2} = n - r - \frac{k+1}{2} >
\left(\frac{1}{2} + \epsilon\right)n.\] Since $a_i, b_j \leq r+\frac{1}{2}$ for
$i \geq 2$,
$j \geq 1$, it follows that $a_1 - \frac{k}{2}$ is the furthest pole from zero
of the function $F^a_k(z)F^b_k(z)$, and that this is a simple pole of
$F_k^{a,b}(z)$.  

Set $R = (1 + \epsilon)\left(r + \frac{k+1}{2}\right)$ and notice \[R <
\frac{n}{2} < a_1 - \frac{k}{2}, \qquad  R > \max_{i \geq 2, j \geq
1}\left(\left|a_i - \frac{k}{2}\right|, \left|b_j -
\frac{k}{2}\right|\right),\] so that $a_1 - \frac{k}{2}$ is outside the loop
$|z| = R$, while all other poles are inside.  Thus
\begin{equation}\label{error_integral}
 \frac{\chi^\lambda(C)}{f^\lambda} = \frac{-1}{kn^{\underline{k}}}
\Res_{z = a_1 - \frac{k}{2}} F_k^{a,b}(z) + \frac{-1}{k n^{\underline{k}}}
\frac{1}{2\pi i} \int_{|z| = R} F_k^{a,b}(z)dz.
\end{equation}
The residue term 
 is equal to the main term of the theorem, so it remains to bound the integral.

Recall the notation
$(a_1', ..., a_{m'}'|b_1', ..., b_{m'}') = \lambda
\setminus \lambda_1$, and that \[F_k^{a,b}(z) = \frac{z-a_1 -
\frac{k}{2}}{z-a_1+\frac{k}{2}} F_k^{a',b'}(z+1).\]  Thus we express the
integral of
(\ref{error_integral}) as
\begin{equation}\label{residue_extracted}
 \frac{-1}{k n^{\underline{k}}} \frac{1}{2\pi i} \int_{|z| = R} F_k^{a,b}(z) dz=
\frac{-1}{kn^{\underline{k}}} \frac{1}{2\pi i} \int_{|z| = R}
F_k^{a',b'}(z+1) \frac{z- a_1 - \frac{k}{2}}{z-a_1 + \frac{k}{2}} dz.
\end{equation}

When $k > r$, $F_k^{a',b'}(z+1)$ is holomorphic, and so
(\ref{residue_extracted})
is zero, which proves the latter claim of the theorem.  Thus we may now assume
that $r \geq k$.

On the contour $|z| = R$, 
 \[\left|z- a_1 +
\frac{k}{2}\right| \geq a_1 - R - \frac{k}{2} \geq n - (2 + \epsilon)\left(r +
\frac{k+1}{2}\right) \gg_\epsilon n,
\]
 so that $\left|\frac{z- a_1 - \frac{k}{2}}{z-a_1 + \frac{k}{2}}
-1\right| = O_\epsilon \left(\frac{k}{n}\right).$  Thus
(\ref{residue_extracted}) is equal to ($C'$ is the $k$ cycle class on $S_r$)
\begin{equation}\label{contracted_evaluation}
\frac{r^{\underline{k}}}{n^{\underline{k}}} \frac{\chi^{\lambda\setminus
\lambda_1}(C')}{f^{\lambda\setminus \lambda_1}} + 
O_\epsilon\left(\frac{k}{n} \frac{1}{ k n^{\underline{k}}}
\int_{|z| = R}
\left|F_{k}^{a',b'}(z+1)\right|
d|z|\right).
\end{equation}
We bound the first term by $\frac{r^{\underline{k}}}{n^{\underline{k}}}$, since
 all character ratios are bounded by 1.

To bound the integral, note that $F^{a'}_k(z)$ and $F^{b'}_k(z)$
each have at most $\sqrt{r}$ poles.  On the contour $|z| = R$,
\[\left|\frac{z-a_i -\frac{k}{2}}{z-a_i +\frac{k}{2}}\right| \leq 1 +
O_\epsilon\left(\frac{k}{r}\right),\] and the terms in $F^{b'}_k(z)$ are bounded
similarly.  It follows that on $|z| = R$, \[|F^{a'}_k(z+1)F^{b'}_k(z+1)| \leq
\exp\left(O_\epsilon\left(\frac{k}{\sqrt{r}}\right)\right).\]  Meanwhile, also
on $|z| =
R$, 
\[ \left|\left(z + \frac{k+1}{2}\right)^{\underline{k}}\right| \leq \left(|z| +
\frac{k+1}{2}\right)^k \leq 
\left((1 + \epsilon)(k+1+r)\right)^k.\]
Putting these bounds together, we deduce a bound for the second term of
(\ref{contracted_evaluation}) of
\begin{align*} \frac{1}{n\cdot n^{\underline{k}}}&\frac{1}{2\pi}\int_{|z| = R}
|F^{a',b'}_k(z+1)| d|z| \leq\frac{1}{n^{\underline{k}} } \sup_{|z| = R}
\left|F_k^{a',b'}(z+1)\right|
\\ & = O_\epsilon\left(\frac{1}{n^{\underline{k}}} \exp\left(k
\left[\log\left((1 +\epsilon)(r + k + 1)\right) +O_\epsilon
\left(\frac{1}{\sqrt{r}}\right)\right] \right)\right).
\end{align*}
To complete the proof of the theorem, use $n^{\underline{k}} \geq
(n-k)^k$ and note that the term $\frac{r^{\underline{k}}}{n^{\underline{k}}}$
from the character ratio in (\ref{contracted_evaluation}) is trivially absorbed
into this error term.
\end{proof}

\begin{proof}[Proof of Theorem \ref{character_ratio_theorem} part b.]
Choose $\theta_1$ with $\frac{2}{3} < \theta_1 < \theta$.  
Since the poles of $F^{a,b}_k$ are among  $a_i - \frac{k}{2}$ and $
\frac{k}{2}- b_i$, $i = 1, 2, 3, ...$, choosing $\epsilon = \epsilon(\theta)>0
$ sufficiently small, $a_i, b_i <e^{-\theta}n$ and $k \leq \epsilon n$
guarantees that the contour $|z| =R = e^{-\theta_1}
n$
contains all
poles of $F^{a,b}_k(z)$.  Thus
\[
 \frac{\chi^{\lambda}(C)}{f^\lambda} =
\frac{-1}{kn^{\underline{k}}}\frac{1}{2\pi i} \int_{|z| = R}
F^{a,b}_k(z) dz.
\]
Write $ \frac{1}{k n^{\underline{k}}} F^{a,b}_k(z) =\frac{(z +
\frac{k-1}{2})^{\underline{k}}}{k n^{\underline{k}}} F^a_k(z)F^b_k(z)$.  Since
$k
\leq \epsilon n$,  for $|z| = R$, \[\left|\frac{(z +
\frac{k-1}{2})^{\underline{k}}}{n^{\underline{k}}}\right| \leq
\frac{\left(R+\frac{k}{2}\right)^k}{(n-k)^k} \leq \exp(k
(-\theta_1 + o(1))),\] with $o(1)$ indicating a quantity that may be made
arbitrarily small with a sufficiently small choice of $\epsilon$.  Also,
$F^a_k(z)$
and $F^b_k(z)$ are each composed of at
most $ \sqrt{n}$ factors, each of size at most $1 +
O_{\theta,\theta_1,\epsilon}(\frac{k}{n})$. We deduce
that for $|z| = R$,
\[ 
 \left|\frac{1}{k n^{\underline{k}}}
F^{a,b}_k(z)\right| \leq \exp\left(k\left(-\theta_1 +
o_{\theta,\theta_1}(1) \right)\right),
\]
the error term requiring that first $\epsilon$ be small, and then that $n$ be
large.
The length of the contour
is $O(n)=O\left(\exp\left(\frac{k}{6}\right)\right)$.  Since $\theta_1 >
\frac{1}{2} + \frac{1}{6}$, it follows that the bound
$\exp\left(-\frac{k}{2}\right)$ holds for the character ratio for all $n$
sufficiently large. 
\end{proof}

We now turn to part c. of Theorem \ref{character_ratio_theorem}.   We will
again estimate the integral
\begin{equation}\label{contour_short_form_again}
 \frac{\chi^\lambda(C)}{f^\lambda}
=\frac{-1}{kn^{\underline{k}}}\frac{1}{2\pi i}\oint F_k^{a,b}(z)
dz,
\end{equation}
but the idea now
will be to evaluate clumps of
poles together.  With an appropriate choice of contour, at any given point
the poles that are sufficiently far away make an essentially constant
contribution to the integrand, and so may be safely removed, simplifying the
integral.

Denote by 
\[\mathcal{P} = \left\{a_j - \frac{k}{2}: 1\leq j \leq m\right\} \cup
\left\{\frac{k}{2} - b_j:
1 \leq j \leq
m\right\}\] the multi-set (that is, set counted with multiplicities) of poles of
$F_k^{a}(z)$ and $F_k^b(z)$. 
One trivial fact concerning the distribution of poles in $\mathcal{P}$ is the
following bound.
\begin{lemma}\label{trivial_upper_bound}  For any $x > 0$, 
\[
 \#\{p \in \mathcal{P}: |p| > x\} \leq \frac{n + km}{x}.
\]
\end{lemma}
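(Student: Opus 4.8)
The plan is to reduce the count to a first-moment (Markov-type) estimate on the sizes of the poles. The key elementary observation is that for nonnegative reals $u,v$ one has $|u-v|\leq u+v$. Applying this to each element of the multiset $\mathcal{P}$ gives $\left|a_j-\frac{k}{2}\right|\leq a_j+\frac{k}{2}$ and $\left|\frac{k}{2}-b_j\right|\leq b_j+\frac{k}{2}$ for each $1\leq j\leq m$. Summing these $2m$ inequalities and invoking the Frobenius identity $\sum_{i=1}^m a_i+\sum_{i=1}^m b_i=n$, the total mass of the poles satisfies
\[
\sum_{p\in\mathcal{P}}|p|\;\leq\;\sum_{j=1}^m\left(a_j+\frac{k}{2}\right)+\sum_{j=1}^m\left(b_j+\frac{k}{2}\right)\;=\;n+km.
\]

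To finish I would note that each pole $p\in\mathcal{P}$ with $|p|>x$ contributes more than $x$ to the left-hand side above, so that $x\cdot\#\{p\in\mathcal{P}:|p|>x\}\leq\sum_{p\in\mathcal{P}}|p|\leq n+km$, and dividing through by $x$ yields the claim. I do not anticipate any real obstacle here; the only point requiring a little care is that $\mathcal{P}$ is a multiset, so both the count and the sum must be read with multiplicity, but since the inequality $|u-v|\leq u+v$ is applied term by term this causes no difficulty. (The constant could be sharpened slightly, e.g.\ by using $\left|a_j-\tfrac{k}{2}\right|\leq\max(a_j,\tfrac{k}{2})$, but the crude bound stated is all that will be needed in the sequel.)
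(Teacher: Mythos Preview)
Your proof is correct and is essentially identical to the paper's own argument: the paper also bounds $\sum_{p\in\mathcal{P}}|p|$ by $km+\sum a_i+\sum b_i=km+n$ via the triangle inequality and then applies the same Markov-type counting step.
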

\noindent Indeed, this follows from the fact that 
\[
 \sum_{i=1}^m \left|a_i - \frac{k}{2}\right| + \sum_{i=1}^m\left|\frac{k}{2} -
b_i\right| \leq km + \sum a_i + \sum b_i = km+n.
\]

A useful consequence is that 
for $x$ real, $|x| > k\sqrt{n}$, we can always find a real point $y$ nearby
$x$ with large distance from all poles in $\mathcal{P}$. 
\begin{definition}
 Let $x \in \bR\setminus 0$ and let $L>0$ be a parameter. We say that a real
number $y$ is $L$\emph{-well-spaced} for $x$ with respect to $\mathcal{P}$ if
the bound is satisfied
\begin{equation}\label{well_spaced_condition}
 \sum_{p \in \mathcal{P}: |p-x|< \frac{|x|}{2}}\frac{1}{|p-y|} \leq
\frac{L}{|x|}.
\end{equation}

\end{definition}

The following lemma says that if $|x|$ is sufficiently large, then there
are always many points $y$ nearby $x$ that are well-spaced for $x$ with respect
to $\mathcal{P}$.

\begin{lemma}\label{point_spacing}
Let $n > e^5$ and let $1<k < \sqrt{n}$. 
Let $x$ be real with
$|x| > k \sqrt{n}\log n$.   Then there exists a real
$y$ with $|y-x| < \sqrt{n}$ which is $4 \sqrt{n} \log n$-well-spaced for $x$
with respect to $\mathcal{P}$.
\end{lemma}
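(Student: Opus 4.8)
The plan is to argue by an averaging (pigeonhole) argument over candidate points $y$ in the interval $I = (x - \sqrt{n}, x + \sqrt{n})$. The only poles that matter for the well-spaced condition \eqref{well_spaced_condition} are those $p \in \mathcal{P}$ with $|p - x| < |x|/2$; call this sub-multiset $\mathcal{P}_x$. Since $|x| > k\sqrt n \log n > k\sqrt n$, Lemma \ref{trivial_upper_bound} applied with $x$ replaced by $|x|/2$ (and using $m \le \sqrt n$, so $km \le k\sqrt n < |x|$) bounds $\#\mathcal{P}_x \le \frac{n + km}{|x|/2} = \frac{2(n+km)}{|x|}$, which is $O\!\left(\frac{n}{|x|}\right) = O\!\left(\frac{\sqrt n}{k \log n}\right) = O(\sqrt n)$. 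So there are only few nearby poles.

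Next I would integrate, or rather sum, the quantity $\Phi(y) = \sum_{p \in \mathcal{P}_x} \frac{1}{|p - y|}$ over a discrete net of candidate points in $I$, or integrate over $y \in I$ and then invoke the mean value inequality. Integrating over $y \in I$: for each fixed pole $p$, $\int_I \frac{dy}{|p-y|} \le \int_{-\sqrt n}^{\sqrt n} \frac{du}{|u|}$ diverges, so instead I restrict to $y$ at distance $\ge \eta$ from every pole for a suitable $\eta$; more cleanly, I remove from $I$ the union of intervals $(p-\eta, p+\eta)$ over $p \in \mathcal{P}_x$, whose total length is $2\eta \cdot \#\mathcal{P}_x$. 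Choosing $\eta$ a small constant times $\frac{|x|}{\#\mathcal{P}_x \log n}$ — so that $2\eta \#\mathcal{P}_x < \sqrt n$, using $|x| < n$ and $\#\mathcal{P}_x = O(\sqrt n / (k \log n))$ — leaves a set of positive measure inside $I$. On the complement, for each $p$ one has $\int \frac{dy}{|p-y|}$ over that portion of $I$ at distance $\ge \eta$ bounded by $2 \log(\sqrt n / \eta) = O(\log n)$ (since $\sqrt n / \eta$ is polynomial in $n$). Summing over the at most $O(\sqrt n/(k\log n))$ poles gives $\int \Phi(y)\, dy = O(\sqrt n / k)$ over a set of measure $\ge \sqrt n$ (say), so by the pigeonhole/mean value principle there exists an admissible $y \in I$ with $\Phi(y) = O(1/k) \le O(1)$. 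Comparing with the target $\frac{L}{|x|}$ with $L = 4\sqrt n \log n$: since $|x| < n$ we need $\Phi(y) \le \frac{4 \sqrt n \log n}{|x|}$, and since $|x| > k\sqrt n \log n$ this is implied by $\Phi(y) \le \frac{4}{k}$, which the averaging delivers (with room to spare, so the explicit constants in Lemma \ref{trivial_upper_bound} and the choice of $\eta$ can be tuned to land under $4\sqrt n \log n$ comfortably, using $n > e^5$).

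The main obstacle — and the step requiring care — is bookkeeping the constants so that the stated window $|y - x| < \sqrt n$ and the stated constant $4\sqrt n \log n$ both come out right simultaneously: one must verify that after deleting the $\eta$-neighborhoods of the (at most $O(\sqrt n/(k\log n))$, with an explicit constant from Lemma \ref{trivial_upper_bound}) nearby poles, a definite fraction of $I$ survives, and that on it the average of $\Phi$ beats $4\sqrt n \log n / |x|$. Using $|x| > k\sqrt n \log n$ generously — it makes the target larger — and $k \ge 2$, $n > e^5$ to control logarithms gives enough slack. A minor technical point is that $\mathcal{P}$ is a multiset, but multiplicities only help the trivial bound's hypothesis ($km + n$ already counts them) and the sum $\Phi(y)$ is defined with multiplicity, so no change is needed. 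I would present this as: (i) bound $\#\mathcal{P}_x$; (ii) set $\eta$ and show $|I \setminus \bigcup (p \pm \eta)| \ge \sqrt n$; (iii) bound $\int_{I \setminus \bigcup} \Phi$; (iv) pigeonhole and check the constant.
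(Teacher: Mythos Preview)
Your approach is essentially the paper's: average $\Phi(y)=\sum_{p\in\mathcal{P}_x}\frac{1}{|p-y|}$ over $I=(x-\sqrt n,\,x+\sqrt n)$ after excising small neighborhoods of poles, then apply the pigeonhole principle. However, there is a concrete error in your final comparison. You obtain $\Phi(y)=O(1/k)$ and then write that since $|x|>k\sqrt n\log n$, the target $\Phi(y)\le \frac{4\sqrt n\log n}{|x|}$ is \emph{implied} by $\Phi(y)\le \frac{4}{k}$. The implication is reversed: $|x|>k\sqrt n\log n$ gives $\frac{4\sqrt n\log n}{|x|}<\frac{4}{k}$, so the target is \emph{stronger} than what you have established. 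When $|x|$ is of order $n$ the target is of order $\frac{\log n}{\sqrt n}$, far smaller than $O(1/k)$ for bounded $k$, and your argument as written fails there.

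The repair is simply not to weaken the pole-count bound prematurely. Lemma~\ref{trivial_upper_bound} gives $\#\mathcal{P}_x\le\frac{2(n+km)}{|x|}\le\frac{4n}{|x|}$; keep this in terms of $|x|$ rather than substituting $|x|>k\sqrt n\log n$. Each pole contributes $O(\log n)$ to the integral, and averaging over a surviving set of measure $\ge\sqrt n$ yields directly $\Phi(y)\le \frac{C\sqrt n\log n}{|x|}$, which is the required bound. The paper proceeds exactly this way, taking $\eta=k$ and deleting only the $k$-neighborhoods of poles lying in $I$ itself (there are at most $\frac{2.5\sqrt n}{k\log n}$ such poles, so the deleted length is at most $\frac{5\sqrt n}{\log n}\le\sqrt n$ for $n>e^5$); the explicit constant $4$ then falls out. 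Your proposed $\eta\sim\frac{|x|}{\#\mathcal{P}_x\log n}$ would give $2\eta\,\#\mathcal{P}_x\sim\frac{|x|}{\log n}$, which for $|x|$ near $n$ exceeds $|I|$, so that choice too needs adjustment.
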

\begin{proof}
 Let $I$ be the interval $I = \{y: |y-x| < \sqrt{n}\}$.  By Lemma
\ref{trivial_upper_bound}, the interval $I$ contains at most
\[\frac{n+km}{|x|-\sqrt{n}}
\leq \frac{2n}{|x|-\sqrt{n}} \leq
\frac{2.5\sqrt{n}}{k \log n}\] poles of $\mathcal{P}$.  Deleting the segment of
radius $k$ around each pole in $\mathcal{P}\cap I$ removes a set of total
length at most 
\[
 \frac{5 \sqrt{n}}{ \log n} \leq \sqrt{n}\]
leaving $I'$ with $|I'|\geq
\frac{|I|}{2} = \sqrt{n}$.  Now
\begin{align}\notag
\frac{1}{|I'|} \int_{y \in I'} \sum_{p \in \mathcal{P}, |p-x|< \frac{|x|}{2}}
\frac{dy}{|y-p|}&\leq \frac{1}{|I'|} \sum_{p \in \mathcal{P}, |p-x|<
\frac{|x|}{2}} \int_{u \in I, |u-p|\geq k} \frac{du}{|u-p|}\\ & \leq 
\frac{1}{|I'|} \sum_{p \in \mathcal{P}, |p-x|<
\frac{|x|}{2}}2 \times \int_{k}^{\sqrt{n}} \frac{du}{u} \notag \\&\leq
\frac{\log
n}{|I'|} \left|\left\{p \in \mathcal{P}: |p-x| < \frac{|x|}{2}\right\}\right|.
\label{average_of_p_to_y}
\end{align}
Applying the cardinality bound of Lemma \ref{trivial_upper_bound} a second time (recall
$|I'|\geq \sqrt{n}$) we obtain a bound for (\ref{average_of_p_to_y}) of
\[
 \frac{\log n}{\sqrt{n}} \frac{ (n + km)}{\frac{|x|}{2}} \leq \frac{4 \sqrt{n}
\log
n}{|x|}.
\]
It follows that a typical point $y \in I'$ is $4 \sqrt{n}\log n$-well-spaced
for $x$.  
\end{proof}

Recall that $k \leq n^{\frac{1}{2}-\epsilon}$.  We now assume, as we may, that
$n$ is sufficiently large to guarantee $k \leq \frac{\sqrt{n}}{(\log n)^2}$ and
we choose a sequence of well-spaced points at which we partition our integral.

To find our sequence of points, first set 
\[
 x_0 = \frac{k}{2}\sqrt{n}(\log n)^2
\]
 and
 \[
 x_j = x_0 + 2j \sqrt{n}, \qquad j = 1,2,...
\]
for those $j$ such that $x_j \leq n + 4 \sqrt{n}$.
  In each interval \[[x_j-\sqrt{n}, x_j +
\sqrt{n}), \qquad j = 0, 1, 2, ...\] apply Lemma \ref{point_spacing} to find
$y_j^+$, a 
$4\sqrt{n}\log n$-well-spaced point for $x_j$.
Also find a point $y_j^-$ in each interval $(-x_j-\sqrt{n}, - x_j +
\sqrt{n}]$ which is $4\sqrt{n}\log n$-well-spaced for $-x_j$.  Thus 
we have the
disjoint
intervals
\[I_0 = (y_0^-, y_0^+), \qquad I_j^+ = [y_{j-1}^+,y_j^+), \quad I_j^- =
(y_j^-, y_{j-1}^-], \quad j = 1, 2, 3, ...\] which together cover
\[(-n-\sqrt{n}, n+\sqrt{n}).\] Let $q_j^+$ (resp. $q_j^-, q_0$) be the number of
poles of $\mathcal{P}$ contained in $I_j^+$, (resp. $I_j^-, I_0$).

Around each interval $I_j^+,$ $j \geq 1$ we draw
a
rectangular box 
\[
 \cB_j^+ = \left\{z \in \bC: y_{j-1}^+ \leq \Re(z) \leq y_j^+, \; |\Im(z)|
\leq k q_j^+\right\}
\]
and similarly around $I_j^-$.  If $q_j^\pm = 0$ then the box may be
discarded.  We also draw a large box $\cB_0$ containing the origin, with
endpoints
at $y_0^- \pm i
k\sqrt{n}, y_0^+ \pm i k\sqrt{n}$.  A permissible contour with which to apply
integral formula (\ref{contour_short_form_again})  is given by 
\[
 \partial \cB_0  \cup \bigcup_{j\geq 1} \partial \cB_j^+ \cup \bigcup_{j\geq 1} \partial
\cB_j^-,
\]
each box being positively oriented, see figure \ref{contour_figure}.  We use a
somewhat shortened
version of this contour.

\begin{figure}

\caption{Schematic of the boxes $\cB_j$. Between
successive equally spaced points $x_j, x_{j+1}$ denoted by (+),  point $y_j$ is chosen to
be well-spaced from the set of poles (red).  Box
$\cB_j$ has vertical edges at $\Re(z) = y_{j-1}, y_j$,
and height proportional to the number of poles
contained.}\label{contour_figure} 
\centering
\includegraphics[width = 0.7 \textwidth]{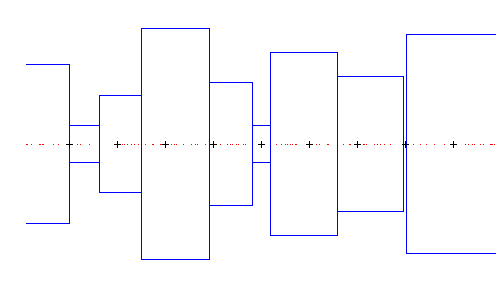}
\end{figure}

In what follows we treat only the integrals around the boxes $\cB_j^+$ and
$\cB_0$,
and we
drop superscripts (so we write e.g. $q_j$ for $q_j^+$, $I_j$ for $I_j^+$ etc).  The argument may be carried out symmetrically for $\cB_j^-$.
The main proposition is as follows.
\begin{proposition}
 Let $j \geq 1$.  There exists a piecewise linear contour $\mathcal{C}_j$,
having total length
 \[
  |\mathcal{C}_j|  \leq 12 k q_j n^{\frac{1}{4}},
 \]
 such that $\mathcal{C}_j$ has winding number 1 around each pole $p \in
\mathcal{P} \cap \cB_j$ and winding number 0 around all poles $p' \in
\mathcal{P}\setminus \cB_j$. For $z\in \mathcal{C}_j$,
$F_k^{a,b}(z)$ satisfies 
\begin{equation}\label{approximate_formula}
 F^{a,b}_k(z) =  \left( x_j +
\frac{k-1}{2}\right)^{\underline{k}} \prod_{p \in \mathcal{P} \cap \cB_j}
\frac{z-p- k}{z-p} +
O\left( k x_j^{k-1} \sqrt{n} \log n\right). 
\end{equation}

\end{proposition}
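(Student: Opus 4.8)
The plan is to identify the poles of $F_k^{a,b}$ lying in $\cB_j$, build $\mathcal{C}_j$ around them by a scale‑$kn^{1/4}$ clustering, and then verify (\ref{approximate_formula}) by comparing $F_k^{a,b}(z)$ with its proposed main term factor by factor. First I would locate the relevant poles. Since $x_0=\tfrac k2\sqrt n(\log n)^2$ exceeds $\tfrac k2+\sqrt n$ for large $n$, while every $b$‑pole $\tfrac k2-b_i$ lies in $\bigl(-\infty,\tfrac{k-1}2\bigr]$ and every $z\in\cB_j$ with $j\ge 1$ has $\Re z\ge y_{j-1}^+\ge y_0^+>x_0-\sqrt n$, the multiset $\mathcal{P}\cap\cB_j$ consists only of the simple poles $a_i-\tfrac k2$ of $F_k^a$. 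I would then use the exact factorisation
\[
F_k^{a,b}(z)=\Bigl(z+\tfrac{k-1}2\Bigr)^{\underline{k}}\,\Bigl[\prod_{p\in\mathcal{P}\cap\cB_j}\frac{z-p-k}{z-p}\Bigr]\,G_j(z),
\]
where $G_j$ collects all factors of $F_k^aF_k^b$ attached to poles outside $\cB_j$. The bracketed product is exactly the product in (\ref{approximate_formula}), so it remains to show, on a well‑chosen $\mathcal{C}_j$, that $\bigl(z+\tfrac{k-1}2\bigr)^{\underline{k}}G_j(z)=\bigl(x_j+\tfrac{k-1}2\bigr)^{\underline{k}}+O(kx_j^{k-1}\sqrt n\log n)$ and that $\prod_{p\in\mathcal{P}\cap\cB_j}\bigl|\tfrac{z-p-k}{z-p}\bigr|=O(1)$ there.

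Next I would build $\mathcal{C}_j$ by a dichotomy on the size of $q_j$. By Lemma \ref{trivial_upper_bound}, $q_j\ll\sqrt n/(k(\log n)^2)$; by the choice of $y_{j-1}^+,y_j^+$ in Lemma \ref{point_spacing}, these points lie at distance $\gg k\log n$ from every pole of $I_j$. If $q_j$ exceeds a fixed multiple of $n^{1/4}/k$, take $\mathcal{C}_j=\partial\cB_j$: its length is $O(\sqrt n+kq_j)\le 12kq_jn^{1/4}$, its horizontal edges sit at height $kq_j$ and its vertical edges at the well‑spaced points, and its winding numbers are correct. If $q_j$ is smaller (so in particular $k<n^{1/4}$), I would order the poles $p_1<\dots<p_{q_j}$ of $I_j$, break them into clusters — starting a new cluster whenever two consecutive poles are more than $4kn^{1/4}$ apart — and enclose each cluster by a rectangle of half‑height and horizontal margin $\tfrac12 kn^{1/4}$, clipped back to the line $\Re z=y_{j-1}^+$ or $\Re z=y_j^+$ whenever it would otherwise leave $\cB_j$. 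Letting $\mathcal{C}_j$ be the union of these disjoint positively oriented boundaries, the per‑cluster perimeters sum to at most $12kq_jn^{1/4}$; the clipping keeps every loop inside $\cB_j$ (so no pole of $I_{j\pm1}$ is enclosed) while still enclosing each pole of its cluster (those sit at distance $\gg k\log n$ from $y_{j-1}^+,y_j^+$); and the winding numbers are as required.

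To then establish (\ref{approximate_formula}) I would record two uniform facts on $\mathcal{C}_j$. (i) $|z-x_j|\ll\sqrt n$: in the box case because $|\Re z-x_j|\le 3\sqrt n$ and $|\Im z|\le kq_j=o(\sqrt n)$; in the clustered case because $z$ lies within $kn^{1/4}=O(\sqrt n)$ of a pole of $I_j$ and $k<n^{1/4}$. Hence $k|z-x_j|/x_j\ll(\log n)^{-2}$, and a telescoping estimate (using $|z|,|x_j|\gg k$) gives $\bigl(z+\tfrac{k-1}2\bigr)^{\underline{k}}=\bigl(x_j+\tfrac{k-1}2\bigr)^{\underline{k}}+O(kx_j^{k-1}|z-x_j|)=\bigl(x_j+\tfrac{k-1}2\bigr)^{\underline{k}}+O(kx_j^{k-1}\sqrt n)$. (ii) $z$ stays at distance $\gg k$ from every pole of $\mathcal{P}\cap\cB_j$, with $k\sum_{p\in\mathcal{P}\cap\cB_j}|z-p|^{-1}=O(1)$ — this comes from the margin $\gg kn^{1/4}$ together with the smallness of $q_j$ on interior cluster loops, from the height $kq_j$ on horizontal edges, and from the well‑spacedness of $y_{j-1}^+,y_j^+$ on vertical and clipped edges — while $\sum_{p\in\mathcal{P}\setminus\cB_j}|z-p|^{-1}\ll\sqrt n\log n/x_j$, obtained by splitting the sum at $|p-x_j|=\tfrac12|x_j|$, bounding the far part via Lemma \ref{trivial_upper_bound} (at most $2\sqrt n$ poles, each at distance $\gg x_j$ from $\mathcal{C}_j$) and the near part via the well‑spacedness of $y_{j-1}^+$ for $x_{j-1}$ and of $y_j^+$ for $x_j$. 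Together these give $\prod_{p\in\mathcal{P}\cap\cB_j}\bigl|\tfrac{z-p-k}{z-p}\bigr|=O(1)$ and $G_j(z)=1+O(k\sqrt n\log n/x_j)$, so multiplying and using $\bigl(x_j+\tfrac{k-1}2\bigr)^{\underline{k}}\ll x_j^k$ produces exactly (\ref{approximate_formula}).

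\emph{The main obstacle.} The delicate point is the construction of $\mathcal{C}_j$: it must carry the right winding numbers, be genuinely short (length $O(kq_jn^{1/4})$, far below the $\approx\sqrt n$ diameter of $I_j$ when $q_j$ is small), and yet keep enough distance from every pole that both $\prod_{p\in\mathcal{P}\cap\cB_j}\bigl|\tfrac{z-p-k}{z-p}\bigr|$ and $k\sum_{p\notin\cB_j}|z-p|^{-1}$ stay bounded. The dichotomy on $q_j$ is what reconciles these demands — when $q_j$ is large, $\partial\cB_j$ is already short and its height $kq_j$ plus the well‑spaced vertical edges keep it off the poles; when $q_j$ is small, a few tight clipped loops suffice, and the smallness of $q_j$ forces the enclosed‑pole product to be harmless — after which step (ii) above is just bookkeeping with Lemmas \ref{trivial_upper_bound} and \ref{point_spacing} and a Taylor expansion of the falling factorial.
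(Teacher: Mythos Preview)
Your proposal is correct and follows essentially the same strategy as the paper: build a short contour around the poles in $\cB_j$, then verify (\ref{approximate_formula}) by replacing the falling factorial at $z$ by its value at $x_j$, showing that the product over distant poles is $1+O(k\sqrt{n}\log n/x_j)$ via the good/bad (far/near) splitting and well-spacedness of $y_{j-1}^+,y_j^+$, and showing that the product over nearby poles is $O(1)$. The analytic steps in your (i) and (ii) match the paper's (\ref{first_approximation_j}), (\ref{distant_poles_bound}), (\ref{near_poles_bound}) almost verbatim.

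The one genuine difference is the construction of $\mathcal{C}_j$. The paper keeps a \emph{uniform} height $kq_j$ throughout: it starts from $\partial\cB_j$, covers each pole by a segment $S_p=[p-kq_j,p+kq_j]$, deletes any subinterval of $I_j$ not covered by $\bigcup S_p$, and bounds the resulting perimeter by $\min(8\sqrt{n}+4kq_j,\,8kq_j^2)\le 12kq_jn^{1/4}$ via $\min(a,b)\le\sqrt{ab}$. On horizontal edges this gives $|z-p|\ge kq_j$ for every $p\in\mathcal{P}\cap\cB_j$, whence $\sum_p 1/|z-p|\le 1/k$ directly, with no appeal to the size of $q_j$. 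Your dichotomy on $q_j\gtrless n^{1/4}/k$ reaches the same length bound by an explicit case split, using height $kq_j$ when $q_j$ is large and height $\tfrac12 kn^{1/4}$ with cluster loops when $q_j$ is small (and then invoking $q_j\ll n^{1/4}/k$ to control the nearby-pole product). Both work; the paper's is a bit slicker, yours perhaps more transparent.
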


\begin{proof}
 Recall that $\cB_j$ is a box containing $q_j$ poles and surrounding interval
$I_j = [y_{j-1}, y_j)$, with corners at
\[
 y_{j-1} \pm ikq_j, \qquad y_j \pm ikq_j.
\]
Recall also that $y_j \in [x_j - \sqrt{n}, x_j+\sqrt{n})$ and
$x_j -
x_{j-1} = 2\sqrt{n}$, so that $|y_{j} - y_{j-1}| \leq 4 \sqrt{n}$.

\begin{figure}
\caption{Schematic of contour $\cC_j$.  Poles in red, box $\cB_j$ in blue, $\cC_j$ in green. }\label{C_contour_figure} 
\centering
\includegraphics[width =  \textwidth]{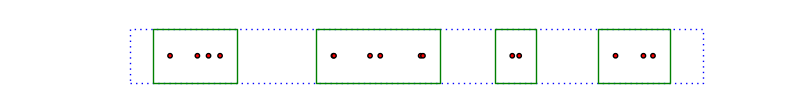}
\end{figure}

To form $\mathcal{C}_j$ we shorten the contour $\partial \cB_j$.  At each pole
$p
\in \cB_j$ consider the segment
\[S_p = [p-kq_j, p+kq_j].\]  If there exists  a subinterval $J$ of interval
$I_j$
that is not covered by $\cup_{p \in \mathcal{P}\cap \cB_j}S_p$  then such $J$
may
be discarded from the box $\cB_j$ by drawing vertical segments at the endpoints
of
$J$ and deleting the parts of $\cB_j$ vertically above and below $J$.
Deleting subintervals in this way if it reduces the total perimeter,
or allowing them to remain when the perimeter is increased, we arrive at a
contour $\mathcal{C}_j$ which is the union of some rectangles, and has total
length at most 
\[ |\mathcal{C}_j| \leq \min(8 \sqrt{n} + 4 k q_j, 8kq_j^2),\]
the first term bounding the result of doing nothing, and the second
bounding the length of a contour with an individual box around every pole. 
Using $\min(a,b) \leq
\sqrt{ab}$,
we have the bound
\[
 |\mathcal{C}_j| \leq 4k q_j + \min(8 \sqrt{n}, 8kq_j^2) \leq 4kq_j + 8 k q_j
n^{\frac{1}{4}}  \leq 12 k q_j n^{\frac{1}{4}}.
\]

We now prove the  formula (\ref{approximate_formula}).  Observe that
by Lemma \ref{trivial_upper_bound}, the number of poles $q_j$ in $\mathcal{P}
\cap \cB_j$ satisfies 
 $q_j \ll \frac{n}{x_j} \ll \frac{\sqrt{n}}{k}$, so that each point of
$\mathcal{C}_j$ has
distance $O(\sqrt{n})$ from $x_j$.  Thus
\[
\forall z \in \mathcal{C}_j, \qquad\left(z + \frac{k-1}{2}\right)^{\underline{k}} = \left( x_j +
\frac{k-1}{2}\right)^{\underline{k}}\left(1 +
O\left(\frac{k\sqrt{n}}{x_j}\right)\right)
.\]
In particular, it follows that
\begin{equation}\label{first_approximation_j}
 \forall z \in \mathcal{C}_j, \qquad F_k^{a,b}(z) = \left(1 + O \left(\frac{k
\sqrt{n}}{x_j}\right)\right) \left( x_j +
\frac{k-1}{2}\right)^{\underline{k}}F_k^a(z)F_k^b(z).
\end{equation}

We  prove 
\begin{equation}\label{distant_poles_bound}
 \forall z \in \mathcal{C}_j, \qquad F_k^a(z)F_k^b(z) = \left( 1 +
O\left(\frac{k\sqrt{n} \log n}{x_j}\right)\right) \cdot \prod_{p \in \mathcal{P}
\cap \cB_j}\frac{z-p-k}{z-p},
\end{equation}
and
\begin{equation}\label{near_poles_bound}
 \forall z \in \mathcal{C}_j, \qquad \prod_{p \in \mathcal{P}
\cap \cB_j}\frac{z-p-k}{z-p} = O(1).
\end{equation}
Inserted in (\ref{first_approximation_j}) these combine to prove the
 formula (\ref{approximate_formula}), (use $ \left( x_j +
\frac{k-1}{2}\right)^{\underline{k}} \leq x_j^k$).

We first consider (\ref{distant_poles_bound}).  This 
estimate is equivalent to \[
 \forall z \in \mathcal{C}_j, \qquad \prod_{p \in \mathcal{P} \setminus \cB_j}
 \frac{z - p \mp k}{z-p} = 1 + O\left(\frac{k\sqrt{n} \log n}{x_j}\right),
\]
the sign determined according as $p$ is a pole from $F_k^a$ or $F_k^b$.
Say a pole $p \in \mathcal{P} \setminus \cB_j$ is `good' if
$|p-x_j| > \frac{x_j}{2}$ and `bad' otherwise.  For $z \in \mathcal{C}_j$,
\[|z-x_j| = O(\sqrt{n}) < \frac{x_j}{3}\] if $n$ is sufficiently large, which
implies that for good $p$, $|z-p| \geq \frac{x_j}{6}$.   Since the total number
of poles is $O(\sqrt{n})$, the product over good poles evidently satisfies the
bound
\[
 \prod_{p \text{ good}} \left(1 \mp \frac{k}{z-p}\right) = 1 +
O\left(\frac{k\sqrt{n}}{x_j}\right).
\]

Since it is not contained in the box $\cB_j$, a bad pole $p$ necessarily lives
in
the complement $\bR
\setminus [y_{j-1},y_j]$, and either satisfies $p > y_j$ and $|p- x_j|\leq
\frac{x_j}{2}$ or
else $p < y_{j-1}$, which entails $\frac{x_j}{2} \leq p \leq x_j$.  Since
$\frac{x_{j-1}}{2} < \frac{x_j}{2}$, and $\frac{3 x_{j-1}}{2} > x_j$ (at least
if $n$ is sufficiently large), it follows that in the second case, $|p -
x_{j-1}| \leq \frac{x_{j-1}}{2}$. Thus, for $z \in
\mathcal{C}_j$, the fact that $y_{j-1} \leq \Re(z) \leq y_j$ implies
\[
 \sum_{p \text{ bad},\; p > y_j} \frac{1}{|z-p|} \leq \sum_{\substack{p > y_j\\
|p-x_j| < \frac{x_j}{2}}} \frac{1}{p - y_j} \leq \frac{4 \sqrt{n} \log n}{x_j}
\]
and 
\[
 \sum_{p \text{ bad},\; p< y_{j-1}} \frac{1}{|z-p|} \leq
\sum_{\substack{p<y_{j-1}\\ |p -x_{j-1}| < \frac{x_{j-1}}{2} }} \frac{1}{y_{j-1}
- p} \leq \frac{4 \sqrt{n}\log n}{x_{j-1}} \leq
\frac{8 \sqrt{n} \log n}{x_j},
\]
by invoking the $4 \sqrt{n}\log n$ well-spaced property of $y_{j-1}$ and
$y_j$. 
It follows 
\[
 \prod_{p \text{ bad}} \left(\frac{z-p \mp k}{z-p}\right) = 1 +
O\left(\frac{k\sqrt{n}\log n}{x_j}\right),
\]
proving (\ref{distant_poles_bound}).

To prove (\ref{near_poles_bound}) we consider two cases.  When either $\Re(z) =
y_{j-1}$ or $\Re(z) = y_j$, observe that for each $p \in \mathcal{P} \cap
\cB_j$,
\[|p - x_j| \leq 2 \sqrt{n} \leq \frac{x_j}{2}.\]  Also, since $x_j-x_{j-1} =
2\sqrt{n}$, \[|p - x_{j-1}| \leq 4 \sqrt{n} \leq \frac{x_{j-1}}{2} \qquad (n
\text{ large}).\]   Thus, invoking the well-spaced property of $y_{j-1}$ and
$y_j$,
\[
 \sum_{p \in \mathcal{P} \cap \cB_j} \frac{1}{|p-y_{j-1}|} \leq \frac{4
\sqrt{n}\log n}{x_{j-1}}, \qquad \sum_{p \in \mathcal{P} \cap \cB_j}
\frac{1}{|p - y_j|} \leq \frac{4 \sqrt{n} \log n}{x_j}
\]
so that if $\Re(z) = y_{j-1}$,
\[
 \prod_{p \in \mathcal{P} \cap \cB_j} \left(1 - \frac{k}{z-p}\right)
\leq \exp\left(O \left(\frac{k \sqrt{n}\log n}{x_{j-1}}\right)\right) = O(1),
\]
and similarly when $\Re(z) = y_j$.

When $\Re(z) \not \in \{y_{j-1}, y_j\}$ then $|z-p| \geq k q_j$ for each $p \in
\mathcal{P}\cap \cB_j$, so that in this case the bound
\[
 \prod_{p \in \mathcal{P} \cap \cB_j} \left(1 - \frac{k}{z-p}\right) = O(1)
\]
is immediate from $|\mathcal{P} \cap \cB_j| = q_j$.  Thus
(\ref{near_poles_bound}) holds in either case.

\end{proof}

We now bound integration around the box $\cB_0$. 
\begin{lemma}\label{cB_0_bound}  Assume $n > e^5$.
We have
\[
 \frac{-1}{k n^{\underline{k}}} \frac{1}{2\pi i} \oint_{\partial \cB_0}
F^{a,b}_k(z) dz = O\left(\sqrt{n} (\log
n)^2 \left(\frac{k\log^2
n}{n^{\frac{1}{2}}}\right)^k \right).
\]

\end{lemma}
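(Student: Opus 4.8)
The plan is to estimate the integral over $\partial\cB_0$ trivially, by bounding $|F_k^{a,b}(z)|$ pointwise along the contour and multiplying by its length. Recall $\cB_0$ is the rectangle with corners $y_0^{\pm}\pm ik\sqrt{n}$, where $|y_0^{\pm}|\in(x_0-\sqrt{n},x_0+\sqrt{n})$ and $x_0=\frac{k}{2}\sqrt{n}(\log n)^2$; since $n>e^5$ and $k\le\sqrt{n}/(\log n)^2$, the quantity $x_0$ dominates $\sqrt{n}+k\sqrt{n}+\frac{k}{2}$ by a wide margin. In particular every $z\in\partial\cB_0$ has $|z|\le\max(|y_0^+|,|y_0^-|)+k\sqrt{n}$, so $|z|+\frac{k}{2}\le 2x_0$; writing $F_k^{a,b}(z)=\bigl(z+\frac{k-1}{2}\bigr)^{\underline{k}}F_k^a(z)F_k^b(z)$, the polynomial factor then obeys $\bigl|\bigl(z+\frac{k-1}{2}\bigr)^{\underline{k}}\bigr|\le(2x_0)^k$ on the whole contour, while the perimeter $2(y_0^+-y_0^-)+4k\sqrt{n}$ is $\le 12x_0$.

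The substantive step is to check that $F_k^a(z)F_k^b(z)=O(1)$ on $\partial\cB_0$. I would split $\partial\cB_0$ into its horizontal and vertical edges. On a horizontal edge $|\Im z|=k\sqrt{n}$, and since every pole in $\mathcal{P}$ is real we have $|z-p|\ge k\sqrt{n}$ for all $p$; as $|\mathcal{P}|\le 2m\le 2\sqrt{n}$ this gives $\sum_{p\in\mathcal{P}}\frac{k}{|z-p|}\le 2$ and hence $\bigl|F_k^a(z)F_k^b(z)\bigr|=\bigl|\prod_{p}(1\mp\tfrac{k}{z-p})\bigr|\le e^{2}$. On a vertical edge $\Re z=y_0^{\pm}$ I would use that $y_0^{\pm}$ was chosen (via Lemma \ref{point_spacing}) to be $4\sqrt{n}\log n$-well-spaced for $\pm x_0$: the poles $p$ with $|p\mp x_0|<x_0/2$ contribute $\sum\frac{1}{|z-p|}\le\sum\frac{1}{|y_0^{\pm}-p|}\le\frac{4\sqrt{n}\log n}{x_0}$, while each of the remaining $O(\sqrt{n})$ poles has $|z-p|\gg x_0$ and so contributes $O(\sqrt{n}/x_0)$ in total. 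Thus $\sum_p\frac{k}{|z-p|}=O\bigl(\frac{k\sqrt{n}\log n}{x_0}\bigr)=O\bigl(\frac{1}{\log n}\bigr)$ by the definition of $x_0$, so again $|F_k^a(z)F_k^b(z)|=O(1)$.

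Assembling these bounds, and using $n^{\underline{k}}\ge(n-k)^k$, I get
\[
\Bigl|\frac{-1}{kn^{\underline{k}}}\frac{1}{2\pi i}\oint_{\partial\cB_0}F_k^{a,b}(z)\,dz\Bigr|\ll\frac{x_0}{k}\Bigl(\frac{2x_0}{n-k}\Bigr)^{k}.
\]
Then $\frac{x_0}{k}=\frac{1}{2}\sqrt{n}(\log n)^2$, while $\frac{2x_0}{n-k}=k(\log n)^2\cdot\frac{\sqrt{n}}{n-k}=\frac{k(\log n)^2}{\sqrt{n}}\bigl(1+O(\tfrac{k}{n})\bigr)$; since $k\le\sqrt{n}/(\log n)^2$ the factor $\bigl(1+O(\tfrac{k}{n})\bigr)^k$ is $1+o(1)$, so the right-hand side is $O\bigl(\sqrt{n}(\log n)^2\bigl(\frac{k\log^2 n}{\sqrt{n}}\bigr)^{k}\bigr)$, which is the claim.

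The only point needing care is the arithmetic of the various $C^k$ and $(1+o(1))^k$ factors — the loss of $2^k$ in replacing $(|z|+\frac{k}{2})^k$ by $(2x_0)^k$, the constant from the contour length, and the factor $(\frac{n}{n-k})^k$ — all of which must be absorbed into $\bigl(\frac{k\log^2 n}{\sqrt{n}}\bigr)^k$. The slack making this possible is exactly the factor $\frac{1}{2}$ in the definition $x_0=\frac{k}{2}\sqrt{n}(\log n)^2$: it is what ensures $\frac{2x_0}{n}\approx\frac{k\log^2 n}{\sqrt{n}}$. (That $\partial\cB_0$ meets no pole of $F_k^{a,b}$ is already guaranteed by the $4\sqrt{n}\log n$-well-spaced choice of $y_0^{\pm}$, which forces each nearby pole to lie at distance $\gg k$ from the vertical edges.)
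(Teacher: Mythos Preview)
Your proof is correct and follows essentially the same approach as the paper: bound the contour length by $O(x_0)$, the polynomial factor by $(2x_0)^k=(k\sqrt{n}(\log n)^2)^k$, and show $|F_k^a(z)F_k^b(z)|=O(1)$ on $\partial\cB_0$ via the well-spacedness of $y_0^{\pm}$. The only cosmetic difference is that the paper organizes the last step by calling a pole ``good'' when $|z-p|\ge k\sqrt{n}$ and ``bad'' otherwise (observing bad poles occur only on the vertical edges), whereas you split first by horizontal versus vertical edge; the arithmetic is the same.
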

\begin{proof}
Since $\cB_0$ is a box with corners at $y_0^{\pm} \pm ik\sqrt{n}$, the integral
has length $O(k\sqrt{n}(\log n)^2)$, so it will suffice to prove
\[
\forall z \in \partial \cB_0, \qquad \left|\frac{F_k^{a,b}(z)}{k
n^{\underline{k}}} \right| =
O\left(\frac{1}{k}\left(\frac{k\log^2
n}{n^{\frac{1}{2}}}\right)^k\right).
\]

Recall that 
\[
 F_{k}^{a,b}(z) =  \left(z +
\frac{k-1}{2}\right)^{\underline{k}} F_k^a(z)F_k^b(z).
\]
Recall also that $|y_0^{\pm} \mp x_0| \leq \sqrt{n}$ and $x_0 =
\frac{k}{2}\sqrt{n}(\log n)^2$.  Thus, on $\partial \cB_0$ we have $|\Re z| \leq
\frac{k}{2}\sqrt{n} (\log n)^2 + \sqrt{n}$.  Hence, using $k < \sqrt{n}$,
\begin{align*}\left(z + \frac{k-1}{2}\right)^{\underline{k}} &\leq
\left(|\Re(z)| + |\Im(z)| + \frac{k-1}{2} \right)^k  \\&\leq
\left(\frac{k}{2}\sqrt{n} (\log n)^2 + 2\sqrt{n} + k \sqrt{n}\right)^k \\&\leq
(k \sqrt{n}(\log n)^2)^k,\end{align*} the last bound requiring $k+2 \leq
\frac{k}{2}(\log n)^2$, which plainly holds for $n > e^5$. 
Since $n^{\underline{k}} \gg n^k$ when $k < \sqrt{n}$, we obtain the required
bound for $\sup_{z \in \partial \cB_0}
\left|\frac{F_{k}^{a,b}(z)}{kn^{\underline{k}}}\right|$ by checking that
\[\sup_{z \in
\partial \cB_0}|F_k^a(z)F_k^b(z)|
= O(1).\]

To do so, for each $p
\in \mathcal{P}$ write the corresponding
factor of $F^a_k(z)$ or $F^b_k(z)$ as 
\[
\frac{ z-p\pm k}{z-p} = 1 \pm \frac{k}{z-p}.
\]
For a given $z \in \partial \cB_0$, say the pole $p \in \mathcal{P}$ is `good'
for $z$ if $|z-p| \geq k \sqrt{n}$.  Since the total number of poles in
$\mathcal{P}$ is $O(\sqrt{n})$, the part of the product in $F^a_k(z)F^b_k(z)$
contributed by good poles is bounded by $O(1)$, so we may consider only the
bad poles. 

Bad poles exist only if $z$ is on one of the two vertical sides
of the box, that is $\Re(z) = y_0^{\pm}$.  Notice that, e.g.
\[
 |p-y_0^+| < k\sqrt{n} \qquad \Rightarrow \qquad |p - x_0| < (k+1)\sqrt{n} <
\frac{k}{4}\sqrt{n}(\log n)^2 = \frac{x_0}{2}
\]
(similarly if $p$ is near $y_0^-$), and so bad poles satisfy
$\min(|p + x_0|, |p - x_0|) <
\frac{x_0}{2}$. 

By the well-spaced property of $y_0^\pm$, when $\Re(z) = y_0^\pm$ we have
\begin{align*}
 \sum_{p \text{ bad}} \frac{1}{|z - p|} &\leq \sum_{p \text{ bad}, p>0}
\frac{1}{|y_0^+ - p|} + \sum_{p \text{ bad}, p < 0} \frac{1}{|y_0^- - p|}\\&
\leq 
\frac{8 \sqrt{n} \log n}{x_0}  \leq \frac{16}{k \log n}.
\end{align*}
It follows that 
\[
 \prod_{p \text{ bad}}\left(1 +  \frac{1}{|z-p|}\right) = O(1).
\]

\end{proof}

We  now have all of the estimates that we need to complete the proof of
Theorem \ref{character_ratio_theorem}.  We  use the following 
integral formula.

\begin{lemma}\label{exact_integral}
 Let $p_1, p_2, ..., p_s \in \bC$ be a sequence of poles and let $d$ be an
arbitrary
complex number.  Then
\[
  I(p_1, ..., p_s, d) = \frac{1}{2\pi i}\oint \prod_{j=1}^s \frac{z-p_j
+d}{z-p_j} = sd
\]
where the contour is such that it has winding number one about each
pole.
\end{lemma}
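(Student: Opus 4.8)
The plan is to treat the integrand as a rational function and reduce the integral to reading off a single Laurent coefficient at infinity. Write $P(z) = \prod_{j=1}^s (z - p_j)$ and $N(z) = \prod_{j=1}^s(z - p_j + d)$, so that the integrand is $N(z)/P(z)$. Since $N$ and $P$ are monic of the same degree $s$, polynomial division gives $N(z)/P(z) = 1 + Q(z)/P(z)$ with $Q(z) = N(z) - P(z)$ a polynomial of degree at most $s-1$. The only singularities of the integrand lie among the points $p_1, \dots, p_s$, and by hypothesis the contour has winding number $1$ about each of them; it is therefore homologous, in $\bC$ with these points removed, to a single large circle $|z| = R$ enclosing all of the $p_j$. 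The constant term $1$ contributes nothing to a closed-contour integral, so
\[
 I(p_1, \dots, p_s, d) = \frac{1}{2\pi i}\oint_{|z| = R} \frac{Q(z)}{P(z)}\, dz.
\]

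Next I would extract the value of this integral from the behaviour at infinity. Because $\deg Q \le s - 1$ while $P$ is monic of degree $s$, we have $Q(z)/P(z) = c\, z^{-1} + O(z^{-2})$ as $z \to \infty$, where $c$ is the coefficient of $z^{s-1}$ in $Q$. Integrating term by term over $|z| = R$ and sending the lower-order terms to zero as $R \to \infty$, the integral equals exactly $c$.

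It then remains to compute $c$, the coefficient of $z^{s-1}$ in $Q = N - P$. Expanding the monic products, $N(z) = z^s - \bigl(\sum_{j=1}^s (p_j - d)\bigr) z^{s-1} + \cdots$ and $P(z) = z^s - \bigl(\sum_{j=1}^s p_j\bigr)z^{s-1} + \cdots$; subtracting, the $z^s$ terms cancel and the coefficient of $z^{s-1}$ in $Q$ is $-\bigl(\sum_j p_j - sd\bigr) + \sum_j p_j = sd$. Hence $I(p_1,\dots,p_s,d) = sd$, as claimed.

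There is essentially no serious obstacle here; the only point needing care is the first step, namely observing that the winding-number hypothesis — together with the fact that the $p_j$ exhaust the poles of the integrand even when some of them coincide — makes the contour equivalent to a single large circle, so that the full sum of residues is captured. Everything else is a one-line computation. As an alternative to the polynomial division, one may instead expand $\log\bigl(N(z)/P(z)\bigr) = \sum_{j=1}^s \log\bigl(1 + d/(z-p_j)\bigr) = sd/z + O(z^{-2})$, so that $N(z)/P(z) = 1 + sd/z + O(z^{-2})$, and read off the residue at infinity directly.
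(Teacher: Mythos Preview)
Your proof is correct but proceeds differently from the paper. The paper argues by differentiating the integrand $\prod_j\bigl(1 + \frac{d}{z-p_j}\bigr)$ with respect to each $p_j$; the resulting rational function has total degree $-2$, so its integral over a large loop vanishes, whence $I$ is independent of the $p_j$ and may be evaluated at $p_1 = \cdots = p_s = 0$, where the single residue is visibly $sd$. You instead compute the residue at infinity directly, via the $z^{s-1}$ coefficient of $N - P$ (equivalently, via the first-order term of the Laurent expansion of $N/P$). Your route is more elementary and self-contained, needing only Vieta's formulas; the paper's route is slightly more conceptual in that it explains \emph{a priori} why the pole locations play no role. Both arguments are short, and your care about possibly coincident $p_j$ is appropriate and handled correctly by passing to a large circle.
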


\begin{proof}
 The integral is independent of the appropriate contour, so take the contour to
be a large loop containing the poles. We may write the integrand as
$\prod_{j=1}^s \left(1 + \frac{d}{z-p_j}\right).$ Differentiating the integrand
with respect to $p_j$ results in a rational function having total degree $-2$. 
It follows that $\frac{\partial}{\partial p_j} I(p_1, ..., p_s, d) = 0$ for each
$j$, since the integral can be made made arbitrarily small by taking the contour
to be sufficiently large.  Taking $p_j = 0$ for each $j$, we find that the
residue at 0 is $sd$.
\end{proof}

\begin{proof}[Proof of Theorem \ref{character_ratio_theorem} part c.]
Write
\[
 \frac{\chi^\lambda(C)}{f^\lambda} = \frac{-1}{kn^{\underline{k}}} \frac{1}{2\pi
i}\left\{
\oint_{\partial \cB_0} + \sum_{j \geq 1} \oint_{\mathcal{C}_j^+} + \sum_{j \geq
1} \oint_{\mathcal{C}_j^-}\right\} F_k^{a,b}(z) dz.
\]

The contribution from $\partial \cB_0$ is bounded in Lemma \ref{cB_0_bound}
by
\[\frac{-1}{kn^{\underline{k}}} \frac{1}{2\pi i} \oint_{\partial \cB_0}
F^{a,b}_k(z) dz = O\left(\sqrt{n} (\log
n)^2 \left(\frac{k\log^2
n}{n^{\frac{1}{2}}}\right)^k \right).
\]
which may be absorbed into the error term of the theorem.

For each $j \geq 1$, write the contribution from  the $q_j^+$ poles inside $\mathcal{C}_j^+$ as 
\[
 \frac{1}{2\pi i} \int_{\mathcal{C}_j^+} \left[  \frac{-( x_j +
\frac{k-1}{2})^{\underline{k}}}{k
n^{\underline{k}}} \prod_{p \in \mathcal{P} \cap \cB_j} \frac{z-p- k}{z-p} +
O\left( \left(\frac{x_j}{n}\right)^{k-1} \frac{\log n}{\sqrt{n}}\right)\right]
dz.
\]
For the main term of this integral,   Lemma \ref{exact_integral} gives the 
evaluation 
\begin{align*}
 \frac{1}{2\pi i} \int_{\mathcal{C}_j^+} \frac{-( x_j +
\frac{k-1}{2})^{\underline{k}}}{k
n^{\underline{k}}}& \prod_{p \in \mathcal{P} \cap \cB_j} \frac{z-p- k}{z-p} dz 
=
q_j \frac{\left(x_j + \frac{k-1}{2}\right)^{\underline{k}}}{n^{\underline{k}}}
\\&= \sum_{a_i: a_i - \frac{k}{2} \in \cB_j^+} \left(\frac{a_i}{n}\right)^k
\left(1 +
O\left(\frac{k\sqrt{n}}{a_i}\right)\right),
\end{align*}
the error resulting since each of the $k$ terms of
$\left(x+\frac{k-1}{2}\right)^{\underline{k}}$ is equal to $a_i + O(\sqrt{n})$
-- we use that $a_i = \Omega(k\sqrt{n})$ here.
Had we considered $\mathcal{C}_j^-$ rather than $\mathcal{C}_j^+$ this main term
would involve a
sum over the $b_i$, with an appropriate sign factor.

The integral of the error term  over
$\mathcal{C}_j$ is bounded trivially by using $|\mathcal{C}_j| = O(k q_j
n^{\frac{1}{4}})$, which gives
\[
 O\left(k q_j n^{\frac{1}{4}} \cdot  \left(\frac{x_j}{n}\right)^{k-1} \frac{\log
n}{\sqrt{n}} \right).
\] The ratio of the
error from $\mathcal{C}_j$ 
to the corresponding main term is
\[
 O\left(\frac{k n^{\frac{3}{4}} \log n}{x_j}\right),
\]
and since each $a_i$ in the sum above is within constants of $x_j$, this
proves the Theorem.

Note that in the
claimed formula of the Theorem, the sums over $a_i$ and $b_j$ contain terms
for which $k\sqrt{n} < a_i  \leq y_0^+ + \frac{k}{2}$
and $k \sqrt{n} < b_j \leq -y_0^- + \frac{k}{2}$, which do not appear in
our
evaluation. However, only a bound, rather than an asymptotic, is claimed in this
range, so that the theorem is valid with the extra terms discarded.
\end{proof}

\section{Proof of upper bound for Theorem
\ref{main_theorem}}\label{upper_bound_section}
We now prove the upper bound on mixing time from Theorem \ref{main_theorem} by
proving Proposition \ref{char_ratio_prop}.  Recall that this proposition is a
bound for character ratios at class $C$ of $k$ cycles,
\begin{equation}\label{target_char_ratio_bound}
 \left|\frac{\chi^\lambda(C)}{f^\lambda}\right|^{\frac{n}{k}(\log n + c)}
\leq \frac{1}{f^\lambda},
\end{equation}
uniformly for $k$ less than a fixed constant times $ n$ and for all $\lambda
\vdash n$.

Before proving the estimate, we collect together several 
observations.  First note that the character ratio bound is trivial for the
one-dimensional representations $\lambda = (n), (1^n)$.  Also, exchanging
$\lambda$ with its dual $\lambda'$ leaves the dimension $f^\lambda$ unchanged,
and at most changes the sign of $\chi^\lambda(C)$, so we will assume
without loss of generality that $\lambda$ has $a_1 \geq b_1$.  We set $r =
n-\lambda_1 = n-a_1 - \frac{1}{2}$.

\begin{lemma}[Criterion lemma]\label{the_target_lemma} To prove the character ratio bound (\ref{target_char_ratio_bound}), it suffices to prove that for all $n$ larger than the
maximum of $k$ and a fixed constant,  and for
a
sufficiently large $c > 0$, that for all non-trivial $\lambda$ with $a_1 \geq
b_1$,
 \[
  \log \left|\frac{\chi^\lambda(C)}{f^\lambda}\right| \leq \max \left(
\frac{-kr}{n} + \frac{kr \log r}{2n \log n} + \frac{ckr}{n\log n},
\frac{-k}{2} + \frac{ck}{\log n}\right).
 \]
\end{lemma}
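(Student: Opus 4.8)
The plan is to pass to logarithms and reduce the claimed implication to two elementary inequalities. Since $0\le\bigl|\chi^\lambda(C)/f^\lambda\bigr|\le 1$, the bound (\ref{target_char_ratio_bound}) written with a constant $c$ is equivalent to the single inequality $\log\bigl|\chi^\lambda(C)/f^\lambda\bigr|\le -k\log f^\lambda/(n(\log n+c))$; the cases $\chi^\lambda(C)=0$ and $\lambda\in\{(n),(1^n)\}$ are trivial, and replacing $\lambda$ by its dual if necessary we may assume $a_1\ge b_1$, whence $\lambda_1\ge\sqrt n$ and $1\le r=n-\lambda_1<n$. Thus, if the displayed bound of the lemma holds with some constant $c_0$, it suffices to show that \emph{each} entry of the maximum is at most $-k\log f^\lambda/(n(\log n+c))$ for a suitable fixed $c$; I would take $c=2c_0+2$. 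Write $L=\log n$ and suppose $n$ exceeds a fixed constant depending only on $c_0$, so that quantities such as $c_0c/L$ do not exceed $\tfrac12$.

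For the first entry I would substitute the Diaconis--Shahshahani estimate (\ref{diaconis_shahshahni_dim_bound}) in the form $\log f^\lambda\le r\log n-\tfrac12\log r!$ (using $\binom nr\le n^r/r!$) together with the elementary bound $\log r!\ge r\log r-r$. Dividing the required inequality by $k/n$ and clearing the positive factor $L+c$, it reduces to
\[ (c_0-c)\,r+\frac{c\,r\log r}{2L}+\frac{c_0 c\,r}{L}\ \le\ \tfrac12\log r!-\tfrac{r\log r}{2}. \]
The right side is at least $-r/2$ by Stirling, while (using $\log r\le L$) the left side is at most $r\bigl(c_0-\tfrac c2+\tfrac12\bigr)$, which is $\le -r/2$ precisely when $c\ge 2c_0+2$; hence this entry is dominated uniformly over $1\le r<n$.

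For the second entry I would use only the universal bound $\log f^\lambda\le\tfrac12\log n!\le\tfrac12 nL$. The inequality $-\tfrac k2+c_0k/L\le -k\log f^\lambda/(n(L+c))$ then follows from $\tfrac12-c_0/L\ge L/(2(L+c))$, i.e.\ from $c(L-2c_0)\ge 2c_0 L$, which again holds for $c=2c_0+2$ once $L$ exceeds a fixed constant. Combining the two cases, $\log\bigl|\chi^\lambda(C)/f^\lambda\bigr|\le\max(\cdots)\le -k\log f^\lambda/(n(L+c))$ for every non-trivial $\lambda$ and every $n$ above $\max(k,N_0)$ with $N_0$ fixed. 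Since $k<\delta n$ forces $n>k$, only finitely many small $n$ remain uncovered, and on those $\bigl|\chi^\lambda(C)\bigr|<f^\lambda$ strictly, so a final enlargement of $c$ completes (\ref{target_char_ratio_bound}).

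The only genuine obstacle should be the first entry: one has to verify that the correction $\tfrac{kr\log r}{2nL}$ built into the hypothesised bound matches the $\tfrac12\log r!$ saving in the dimension estimate across the entire range of $r$, the residual $O(r)$ discrepancy being exactly what forces the constant in (\ref{target_char_ratio_bound}) to be taken a bounded amount larger than the constant in the hypothesis. The second entry and the treatment of small $n$ are routine.
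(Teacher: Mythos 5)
Your proof is correct and follows essentially the same route as the paper: bound $\log f^\lambda$ by $r\log n-\tfrac12\log r!$ via (\ref{diaconis_shahshahni_dim_bound}) and Stirling for the first entry of the maximum, and by $\tfrac12\log n!$ for the second, then compare with $-k\log f^\lambda/(n(\log n+c))$. Your explicit bookkeeping of the two constants (taking the exponent constant $2c_0+2$) and the remark on finitely many small $n$ simply make precise what the paper leaves implicit in the phrase ``for a sufficiently large $c$.''
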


\begin{proof}
 By the bound
(\ref{diaconis_shahshahni_dim_bound}) for the dimension, we have (use $\log r!
\geq r \log \frac{r}{e}$)
\[
 f^\lambda \leq \binom{n}{r} \sqrt{r!} \leq \frac{n^r}{\sqrt{r!}}\leq 
\exp\left(r \log n -
\frac{1}{2}\left(r \log \frac{r}{e}\right)\right)
\]
so that a bound of 
\begin{equation*}\label{long_target}
 \log \left|\frac{\chi^\lambda(C)}{f^\lambda}\right| \leq \frac{-kr}{n} +
\frac{kr \log r}{2 n \log n} + \frac{c kr}{n \log n}
\end{equation*}
is sufficient.

On the other hand, for all $\lambda$, $f^\lambda \leq \sqrt{n!}$, so that a
bound of 
\[
 \log \left|\frac{\chi^\lambda(C)}{f^\lambda}\right| \leq \frac{-k}{2} +
\frac{ck}{\log n}
\]
also suffices.
\end{proof}

Our proof splits into two cases depending upon whether $k \geq 6\log n$.  The
essential tool in both cases will be the evaluation of the character ratio from
Theorem
\ref{character_ratio_theorem}.  
For large $k$ we will use only parts a. and b.
of that theorem.  Recall that part a. had a main term equal to
\[
 \MT := \frac{(a_1 -
\frac{1}{2})^{\underline{k}}}{n^{\underline{k}}} \prod_{j=2}^m \frac{a_1 -a_j -
k}{a_1 - a_j} \prod_{j=1}^m \frac{a_1 + b_j}{a_1 + b_j-k}.
\]
\begin{lemma}\label{MT_lemma} Assume $k + r+1 <
\frac{n}{2}$. We have the bound
$
 \MT \leq \exp\left(\frac{-kr}{n}\right).
$
\end{lemma}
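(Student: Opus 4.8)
The plan is to estimate $\log \MT$ by handling its three factors separately. Write $a_1 = n - r - \frac12$, so that $a_1 - \frac12 = n - r - 1$, and note that by hypothesis $a_1 - \frac k2 > \frac n2$, so every denominator appearing in $\MT$ is positive and bounded away from $0$ by a constant multiple of $n$. For the first factor, $\frac{(a_1 - \frac12)^{\underline k}}{n^{\underline k}} = \prod_{i=0}^{k-1} \frac{a_1 - \frac12 - i}{n - i} = \prod_{i=0}^{k-1}\left(1 - \frac{r + \frac12}{n-i}\right) \le \left(1 - \frac{r}{n}\right)^k \le \exp\left(-\frac{kr}{n}\right)$, using $a_1 - \frac12 - i = n - i - (r+\frac12) \le (n-i)(1 - \frac r n)$ termwise. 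So it suffices to show the product of the other two factors is at most $1$.

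For the remaining two products, I would pair them up and show each combined factor is $\le 1$. The $a$-product is $\prod_{j=2}^m \frac{a_1 - a_j - k}{a_1 - a_j}$; since $a_1 > a_j$ and $a_1 - a_j - k$ could be negative, but $|a_1 - a_j - k| \le a_1 - a_j$ precisely when $0 \le k \le 2(a_1 - a_j)$, which follows from $a_1 - \frac k2 > \frac n2 > r + \frac12 \ge a_j$, i.e. $a_1 - a_j > \frac k2$. Hence each factor of the $a$-product has absolute value $\le 1$. The $b$-product is $\prod_{j=1}^m \frac{a_1 + b_j}{a_1 + b_j - k}$; here each factor is $> 1$. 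The idea is to absorb the $b$-product into the slack in the first factor and the $a$-product, or more cleanly: since $a_1 - a_j > \frac k2$ and $a_1 + b_j - k > a_1 - k > \frac k2 > 0$, one checks that $\frac{a_1 - a_j - k}{a_1 - a_j} \cdot \frac{a_1 + b_j}{a_1 + b_j - k} \le 1$ is \emph{not} generally true factor-by-factor since the index sets differ, so instead I would bound $\log$ of the $b$-product by $\sum_j \log\left(1 + \frac{k}{a_1 + b_j - k}\right) \le \sum_j \frac{k}{a_1 - k} \le \frac{m k}{a_1 - k}$ and compare against the negative contribution $\sum_{j \ge 2}\log\left(1 - \frac{k}{a_1 - a_j}\right)$, using $\sum_{j\ge 2} a_j + \sum_j b_j = r + \frac12$ and $a_1 - a_j \ge a_1 - r \gg n$, $a_1 - k \gg n$, together with $m \le \sqrt n$, to see both are $O(k\sqrt n / n) = O(k/\sqrt n)$ — which is \emph{not} obviously $\le 0$.

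The main obstacle is exactly this last point: the $b$-product exceeds $1$, and one must show the first factor's bound $\exp(-kr/n)$ has enough room to absorb it, i.e. prove the sharper statement $\frac{(a_1-\frac12)^{\underline k}}{n^{\underline k}} \cdot \prod_{j=1}^m \frac{a_1+b_j}{a_1+b_j-k} \le \exp(-kr/n)$ and separately $\prod_{j=2}^m \frac{a_1 - a_j - k}{a_1-a_j}$ has absolute value $\le 1$ (which is clean, as above). For the first, I would write the left side as $\prod_{i=0}^{k-1}\frac{n-i-(r+\frac12)}{n-i} \cdot \prod_{j=1}^m\frac{a_1+b_j}{a_1+b_j-k}$ and instead combine at the level of the full rational function: bound $\frac{(a_1 - \frac12)^{\underline k}}{n^{\underline k}} \prod_j \frac{a_1+b_j}{a_1+b_j-k} \le \prod_{i=0}^{k-1}\frac{a_1 - \frac12 - i}{n-i} \cdot \left(\frac{a_1 + b_1}{a_1 + b_1 - k}\right)^m$ is too lossy; the right approach is to use that each factor $\frac{a_1+b_j}{a_1+b_j-k} \le 1 + \frac{k}{a_1+b_j-k} \le \exp\left(\frac{k}{a_1 - k}\right)$ and $m \le \sqrt{n}$ while $\frac{a_1-\frac12-i}{n-i} \le 1 - \frac{r+1/2}{n}$, so the total log is $\le -\frac{k(r+1/2)}{n} + \frac{k\sqrt n}{a_1 - k}$; since $a_1 - k > \frac n2 - \frac k2 \gg n$, this is $\le -\frac{kr}{n} + O\left(\frac{k}{\sqrt n}\right)$, and one needs $\frac{k}{\sqrt n} \ll \frac{k}{n} \cdot$ (something) — this still does not close, so the genuinely correct move must be that the $a$-product contributes a negative term of size $\gg k\sqrt n/n$ that cancels the $b$-product, OR that $m \le \sqrt{2r}$ (since $\sum a_i + \sum b_i \le r + 1$ off the first row forces $m^2 \lesssim r$), making the $b$-product's log at most $O(k\sqrt r/n)$, which combined with the main $-kr/n$ term and $r < n$ does close once $r$ is bounded below; for small $r$ one uses the error-is-zero clause or a direct finite check. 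I would carry out the bookkeeping with $m \le \sqrt{2r+1}$ as the key quantitative input.
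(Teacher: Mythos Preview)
Your reduction ``it suffices to show the product of the other two factors is at most $1$'' is already false: take $m=1$, so the $a$-product is empty and the $b$-product is $\frac{a_1+b_1}{a_1+b_1-k}=\frac{n}{n-k}>1$. So none of the subsequent attempts to prove that the $a$- and $b$-products multiply to at most $1$ can succeed, and indeed you discover this yourself when each approach fails to close. The final suggestion of using $m\le C\sqrt{r}$ together with the extra $\tfrac{k}{2n}$ from the $r+\tfrac12$ in the first factor does not work either: you would need $C\sqrt{r}\cdot \tfrac{k}{n}\le \tfrac{k}{2n}$, i.e.\ $r=O(1)$, and the ``error-is-zero'' clause you invoke belongs to Theorem~\ref{character_ratio_theorem}(a), not to this lemma.

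The missing idea is an exact cancellation. Bound each $a$-factor uniformly by $1-\tfrac{k}{n-r}$ (since $a_1-a_j\le n-r$) and each $b$-factor uniformly by $1+\tfrac{k}{n-k-r}$ (since $a_1+b_j-k\ge n-r-k$). The point is that
\[
\Bigl(1-\tfrac{k}{n-r}\Bigr)\Bigl(1+\tfrac{k}{n-k-r}\Bigr)=1
\]
\emph{exactly}, so the $m-1$ terms of the $a$-product cancel $m-1$ terms of the $b$-product, leaving a single surplus factor $\tfrac{n-r}{n-k-r}$. This factor then telescopes against the exact first factor $\prod_{i=1}^{k}\tfrac{n-r-i}{n-i+1}$ to give $\prod_{i=1}^{k}\tfrac{n-r-i+1}{n-i+1}\le (1-\tfrac{r}{n})^k\le e^{-kr/n}$. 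The lesson is that you must not bound the first factor by $e^{-kr/n}$ prematurely; keeping it exact is what absorbs the genuine excess from the $b$-product.
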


\begin{proof}
Recall $a_1 = n-r-\frac{1}{2}$. We estimate
 \begin{align*}
\MT&\leq \prod_{i=1}^k \left(\frac{n-r-i}{n+1-i}\right)\prod_{j=2}^m
\left(1 -
\frac{k}{n-r}\right)\prod_{j=1}^m\left(1 + \frac{k}{n-k-r}\right) 
\\& \leq \frac{n-r}{n-k-r} \prod_{i=1}^k \frac{n-r-i}{n-i+1} \\&= \prod_{i=1}^k
\frac{n-r-i+1}{n-i+1} \leq \left(1-\frac{r}{n}\right)^k \leq
\exp\left(\frac{-kr}{n}\right).
\end{align*}
\end{proof}

\begin{proof}[Proof of Proposition \ref{char_ratio_prop} when $6 \log n \leq k
\leq \delta n$]

When $r > 0.49 n$ we have $a_1 < 0.51 n$.  Let $\theta = 0.67$, and note that
$e^{-\theta} > 0.511$.  Thus $a_1  < e^{-\theta}n$ so that part
b of Theorem \ref{character_ratio_theorem}  guarantees that there exists
$\delta =\epsilon(0.67)>0$, such that, for $n$ larger than a fixed constant, for
all $6 \log n \leq k \leq
\delta n$,
$\left|\frac{\chi^\lambda(C)}{f^\lambda}\right| \leq e^{\frac{-k}{2}}.$ Thus the
second condition of Lemma \ref{the_target_lemma} is satisfied.

So we may suppose that $r \leq 0.49 n$ and appeal to part a. of Theorem
\ref{character_ratio_theorem}. Suppose that $\delta$ is sufficiently small so
that   that $r + k + 1 <
\frac{n}{2}$. If $r < k$ then the error term of part a. of Theorem
\ref{character_ratio_theorem} is
zero so that the previous lemma implies 
\[
 \left|\frac{\chi^\lambda(C)}{f^\lambda}\right| = \MT \leq
\exp\left(\frac{-kr}{n}\right).
\]
Thus the first criterion of Lemma \ref{the_target_lemma} is satisfied with $c =
0$.

Assume now that $r \geq k$.  Let now $\delta$  sufficiently small
so that we may choose $\epsilon = \frac{1}{200}$ in part a, that is, $r + k +
1 \leq \left(\frac{1}{2} - \frac{1}{200}\right)n$, and also assume that
$\frac{(1 + \epsilon)(k + r+1)}{n-k} < 0.5 - \eta$ for some fixed $\eta
> 0$.  
Then  part a. gives $\left|\frac{\chi^\lambda(C)}{f^\lambda}\right| \leq \MT +
\ET$ with 
\[
 \ET  \ll \exp \left(k \left[ \log \frac{(1 + \epsilon) (k+r+1)}{n-k} +
O_\epsilon\left(r^{\frac{-1}{2}}\right)\right]\right) \leq 2^{-k} \; (n
\text{ large}).
\]
Since $r < \frac{n}{2}$ we deduce that
\begin{align*}
 \MT + \ET &\leq \exp\left(\frac{-kr}{n}\right) + \exp\left(-k \log 2\right)
\\&\leq \exp\left(\frac{-kr}{n}\right) \left(1 + \exp\left(-k\left(\log 2 -
\frac{1}{2}\right)\right)\right).
\end{align*}
Since $k \geq 6 \log n$ and $6(\log 2 - .5) >1.15$ we deduce
\[
\log \left|\frac{\chi^\lambda(C)}{f^\lambda}\right| \leq \log(\MT + \ET) \leq
\frac{-kr}{n} + O(n^{-1.15}),
\]
so that the first criteria of Lemma \ref{the_target_lemma} is satisfied.

\end{proof}

When $k \leq 6\log n$ we make essential use of the asymptotic evaluation of the
character ratio proved in part c. of Theorem \ref{character_ratio_theorem}.  The
next lemma shows that we may restrict attention to only the main term of that
evaluation.

\begin{lemma}[Small $k$ criterion lemma]\label{condition_lemma}
 Let $2 \leq k \leq 6 \log n$.  We have the bound
 \begin{align*}
  &\left|\frac{\chi^\lambda(C)}{f^\lambda}\right|\leq\\ &\quad \left(1 + O
\left(\frac{\log
n}{n^{\frac{1}{4}}}\right)\right)\left[\sum_{a_i > k
n^{\frac{1}{2}}} \frac{a_i^k}{n^k} + \sum_{b_i > k n^{\frac{1}{2}}}
\frac{b_i^k}{n^k}\right] +
O\left(\frac{e^{-k}(\log n)^4}{n^{\frac{1}{4}}}\right).
 \end{align*}
In particular, if $r > n^{\frac{5}{6}}$ and $n$ is sufficiently large, and if
\begin{align*}
& \log \left(\sum_{a_i > k n^{\frac{1}{2}}} \frac{a_i^k}{n^k} + \sum_{b_i > k
n^{\frac{1}{2}}} \frac{b_i^k}{n^k}\right)\\ & \qquad\qquad\qquad\qquad\leq
\max\left(
\frac{-kr}{n} + \frac{kr \log r}{2n \log n} + \frac{ckr}{n \log n}, \frac{-k}{2}
+ \frac{ck}{\log n}\right)
\end{align*}
then after changing constants, the same estimate holds for
$\log \left| \frac{\chi^\lambda(C)}{f^\lambda}\right|$, so that the condition of
Lemma \ref{the_target_lemma} is satisfied.
\end{lemma}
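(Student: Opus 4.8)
First I would note that both assertions follow from (the proof of) part~c of Theorem~\ref{character_ratio_theorem}, which is available here since $2\le k\le 6\log n$ forces $k\le n^{1/2-\epsilon}$ for any fixed $\epsilon\in(0,\tfrac12)$ once $n$ is large. Write $S=\sum_{a_i>kn^{1/2}}\tfrac{a_i^k}{n^k}+\sum_{b_i>kn^{1/2}}\tfrac{b_i^k}{n^k}$ for the main sum; note $S\le 1$, since $\sum_i a_i+\sum_i b_i=n$ gives $\sum_i a_i^k+\sum_i b_i^k\le n^k$. The proof of part~c presents $|\chi^\lambda(C)/f^\lambda|$ as $S$, plus a per-term relative error $O(kn^{3/4}\log n/a_i)$ attached to each $\tfrac{a_i^k}{n^k}$ (and likewise for the $b_i$), plus one additive term $O\!\left(n^{1/2}(\log n)^2(k\log^2n/\sqrt n)^k\right)$ coming from the box $\cB_0$. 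The $\cB_0$ term is harmless: since $(ek\log^2n/\sqrt n)^k\le n^{-k/3}$ for $n$ large, it is $\ll n^{1/2-k/3}(\log n)^2$, which for every $2\le k\le 6\log n$ is bounded by $e^{-k}(\log n)^4/n^{1/4}$ with room to spare.

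For the per-term errors I would split the relevant $a_i$ into dyadic blocks $a_i\asymp 2^j$. Writing $\tfrac{a_i^k}{n^k}\cdot\tfrac{kn^{3/4}\log n}{a_i}=kn^{-1/4}\log n\cdot\tfrac{a_i^{k-1}}{n^{k-1}}$ and using the elementary inequality $\sum_i a_i^{k-1}\le(\sum_i a_i)^{k-1}\le n^{k-1}$ (valid since $k\ge 2$), one sees that the relative error on a block is of order $kn^{3/4}\log n\cdot 2^{-j}$: for blocks with $2^j$ comparable to $n$ this is $O(\log n/n^{1/4})$ up to fixed powers of $\log n$, and contributes to the multiplicative factor times $S$, while for blocks with small $2^j$ the relative error is large but the terms are correspondingly tiny, and one checks that their total contribution is $\ll e^{-k}(\log n)^4/n^{1/4}$, so can be folded into the additive error. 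Running the same analysis for the $b_i$ and reassembling the blocks yields the displayed bound. This balancing — weighing each block's share of $S$ against the size of its relative error, and checking that the residue genuinely fits inside the exponentially small $e^{-k}$ when $k$ is as large as $6\log n$ — is the main obstacle.

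Finally, for the ``in particular'' clause, put $A=\tfrac{-kr}{n}+\tfrac{kr\log r}{2n\log n}+\tfrac{ckr}{n\log n}$ and $B=\tfrac{-k}{2}+\tfrac{ck}{\log n}$, and let $A',B'$ be the same with a larger constant $c'$. With $t=r/n\in(0,1)$ one computes $\tfrac1k(A-B)=\tfrac{1-t}{2}\bigl(1-\tfrac{2c}{\log n}\bigr)-\tfrac{t|\log t|}{2\log n}$; since $t|\log t|<1-t$ on $(0,1)$ this exceeds $\tfrac{1-t}{2}\bigl(1-\tfrac{2c+1}{\log n}\bigr)>0$ for $n$ large, so $\max(A,B)=A$, and (once $c'-c<\tfrac12\log n$) also $\max(A',B')=A'$. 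Thus the hypothesis reads $S\le e^A$, and since $A\ge-\tfrac{kr}{n}\ge-k$ we have $e^{-k}\le e^A$. Substituting into the displayed bound,
\[
\left|\frac{\chi^\lambda(C)}{f^\lambda}\right| \le e^A\left(1 + O\left(\frac{\log n}{n^{1/4}}\right)\right) + O\left(\frac{e^{-k}(\log n)^4}{n^{1/4}}\right) \le e^A\left(1 + O\left(\frac{(\log n)^4}{n^{1/4}}\right)\right).
\]
Here the hypothesis $r>n^{5/6}$ enters: the gap $A'-A=\tfrac{(c'-c)kr}{n\log n}\ge\tfrac{2(c'-c)}{n^{1/6}\log n}$ dominates $O((\log n)^4/n^{1/4})$ for $n$ large, because $n^{1/12}$ beats any fixed power of $\log n$. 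Hence $e^A\bigl(1+O((\log n)^4/n^{1/4})\bigr)\le e^{A'}=e^{\max(A',B')}$ for a suitable fixed $c'>c$, and taking logarithms gives $\log|\chi^\lambda(C)/f^\lambda|\le\max(A',B')$, which is exactly the criterion of Lemma~\ref{the_target_lemma}.
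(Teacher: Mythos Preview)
Your approach is essentially the paper's, with two local differences worth noting.

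First, your treatment of the $\cB_0$ term has a computational slip. The intermediate bound $(ek\log^2 n/\sqrt n)^k\le n^{-k/3}$ is too crude: at $k=2$ it yields a term of order $e^{-2}n^{-1/6}(\log n)^2$, and this is \emph{not} bounded by $e^{-2}(\log n)^4/n^{1/4}$ since $n^{1/12}\gg(\log n)^2$. The conclusion is nonetheless true; one simply compares directly, or uses the sharper $(k\log^2 n/\sqrt n)^k\ll (\log n)^{3k}n^{-k/2}$, which at $k=2$ gives $(\log n)^8/\sqrt n\ll e^{-2}(\log n)^4/n^{1/4}$.

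Second, for the per-term errors the paper avoids dyadic blocks altogether. It makes a single cut at $a_i=n/e^2$: for $a_i\ge n/e^2$ the relative error $kn^{3/4}\log n/a_i$ is $O((\log n)^2/n^{1/4})$ and feeds the multiplicative factor, while for $a_i<n/e^2$ one writes the absolute error as $\dfrac{e^{-k}}{n^{1/4}}\cdot\dfrac{k(ea_i)^{k-1}\log n}{n^{k-1}}$ and observes that $k(ea_i/n)^{k-1}\le 2ea_i/n$ for such $a_i$ (the function $k\mapsto kx^{k-1}$ is decreasing on $k\ge 2$ when $x<1/e$), so the sum is $\le 2e\sum a_i/n=O(1)$. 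Your dyadic scheme can be made to work, but your ``one checks'' hides exactly this extraction of the $e^{-k}$ factor, which is the crux when $k$ is near $6\log n$.

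For the ``in particular'' clause your argument is actually more explicit than the paper's. The paper simply remarks that one may assume $S\ge e^{-k/2}$ without loss of generality, so that the additive error becomes multiplicative of relative size $O(e^{-k/2}(\log n)^4/n^{1/4})$, and then absorbs both logarithmic corrections into $c$. You instead verify directly that $\max(A,B)=A$ via the inequality $t|\log t|<1-t$ and use $A\ge -k$ to dominate $e^{-k}$; this is a clean route and avoids the case split.
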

\begin{proof}
 To deduce  the second statement from the first, note that
\[\log\left(1 + O\left(\frac{\log n}{n^{\frac{1}{4}}}\right)\right) =
O\left(\frac{\log n}{n^{\frac{1}{4}}}\right),\] which, for $r >
n^{\frac{5}{6}}$ may be absorbed into the RHS of the second statement by
increasing the value of $c$.   Regarding
the error of $O\left(\frac{e^{-k}(\log n)^4}{n^{\frac{1}{4}}}\right)$, it is no
loss of generality to assume that \[\sum\frac{a_i^k}{n^k} +
\sum\frac{b_i^k}{n^k} \geq e^{\frac{-k}{2}},\] so that this error term has
relative size $1 + O\left(\frac{(\log
n)^4e^{-\frac{k}{2}}}{n^{\frac{1}{4}}}\right)$.
Again, the logarithm  of this error may be absorbed by increasing $c$.

To prove the first statement, part c. of Theorem \ref{character_ratio_theorem}
gives
\begin{align*}
 \left|\frac{\chi^\lambda(C)}{f^\lambda}\right| \leq& \sum_{a_i >
kn^{\frac{1}{2}}} \frac{a_i^k}{n^k}\left(1 +
O\left(\frac{kn^{\frac{3}{4}}}{a_i}\right)\right) + \sum_{b_i > k
n^{\frac{1}{2}}} \frac{b_i^k}{n^k}\left(1 + O
\left(\frac{kn^{\frac{3}{4}}}{b_i}\right)\right)\\ & \quad +
O\left(n^{\frac{1}{2}}(\log n)^2\left(\frac{k \log n}{\sqrt{n}}\right)^k\right).
\end{align*}
For $2 \leq k \leq 6 \log n$, the last error term is plainly
$O\left(\frac{e^{-k}(\log n)^4}{\sqrt{n}}\right)$.  Split the sum over $a_i$
according to
$a_i \geq \frac{n}{e^2}$.  Thus the sum over $a_i$ is bounded by
\begin{align*}
& \left(1 + O\left(\frac{\log n}{n^{\frac{1}{4}}}\right)\right) \sum_{a_i \geq
\frac{n}{e^2}} \frac{a_i^k}{n^k} + \sum_{kn^{\frac{1}{2}} < a_i <
\frac{n}{e^2}} \frac{ a_i^k}{n^k} \\ &\qquad \qquad\qquad\qquad+ O\left(\frac{
e^{-k}}{n^{\frac{1}{4}}}
\sum_{kn^{\frac{1}{2}} < a_i < \frac{n}{e^2}} \frac{k(e
a_i)^{k-1}}{n^{k-1}}\right).
\end{align*}
In the last sum, $k\left(\frac{ e a_i}{n}\right)^{k-1}$ is minimized at $k=2$. 
Thus
\[
 \sum_{kn^{\frac{1}{2}} < a_i < \frac{n}{e^2}} \frac{k(e
a_i)^{k-1}}{n^{k-1}} \leq 2e \sum_{a_i} \frac{a_i}{n} = O(1).
\]
Handling the sum over $b_i$ in the same way proves the lemma.
\end{proof}

Recall that we require $a_1 \geq b_1$ and that $a_1 = n-r-\frac{1}{2}$. 
Thinking of $r$ as fixed, set $\delta := \frac{r}{n}$.  Since $\sum a_i + \sum
b_i = n$, an upper bound for $\sum \frac{a_i^k}{n^k} + \sum \frac{b_i^k}{n^k}$
is given by the solution to the following optimization problem.

Let $x_1, x_2, x_3, ...$ be real variables and let $k \geq 2$.
\begin{align*}
 \text{maximize}:& \qquad \sum x_i^k \\
 \text{subject to}:& \qquad 0 \leq x_i \leq 1-\delta, \; \sum x_i = 1 .
\end{align*}

Let $\ell = \left \lfloor (1-\delta)^{-1} \right \rfloor.$  It is easily
checked by varying parameters that an optimal solution of this problem has $x_1
= ... = x_\ell = 1-\delta$, and $x_{\ell+1} = 1 - \ell (1-\delta)$, $x_i = 0$
for $i > \ell+1$, which yields the maximum
\[
 \ell (1-\delta)^k + (1- \ell(1-\delta))^k \leq
(1-\delta)^{k-1}
\]
$(1-\delta)^{k-1}$ being the solution of the continuous analogue of the
optimization problem 
\begin{align*}
 \text{maximize}:& \qquad \|f\|_{L^k(\bR)}^k\\
 \text{subject to}:& \qquad \|f\|_{L^1(\bR)} =1, \; \|f\|_{L^\infty(\bR)} \leq 1-\delta,
\end{align*}
which is less constrained.
\begin{lemma}\label{calculus_lemma}
 Let $k \geq 2$ and let $r = n-a_1 -\frac{1}{2}$.  Assume $a_1
\geq b_1$.  There exists a $c_0 > 0$ such that if $r \leq c_0 n$ then
 \[
 \sum \frac{a_i^k}{n^k} + \sum \frac{b_i^k}{n^k} \leq e^{-\frac{kr}{n} +
\frac{kr^2}{n^2}}.
\]
For all $r \leq n$ we have 
\[
 \sum \frac{a_i^k}{n^k} + \sum \frac{b_i^k}{n^k} \leq e^{-\frac{kr}{2n}}.
\]
\end{lemma}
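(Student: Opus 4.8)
The statement is essentially a calculus consequence of the optimization bound set up in the paragraph just before the lemma, so the plan is to unwind that estimate carefully in the two regimes. Throughout write $\delta = r/n$ and $\ell = \lfloor (1-\delta)^{-1}\rfloor$, and recall that the preceding discussion supplies
\[
\sum \frac{a_i^k}{n^k} + \sum \frac{b_i^k}{n^k} \leq \ell(1-\delta)^k + \bigl(1 - \ell(1-\delta)\bigr)^k \leq (1-\delta)^{k-1}.
\]
Everything then reduces to elementary inequalities in $\delta \in [0,1]$.

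For the second assertion, which is claimed for all $r \leq n$, I would simply use $1-\delta \leq e^{-\delta}$ to get $(1-\delta)^{k-1} \leq e^{-(k-1)\delta}$, and then observe that $k \geq 2$ gives $(k-1)\delta \geq \tfrac{k}{2}\delta$, so that $(1-\delta)^{k-1} \leq e^{-k\delta/2} = e^{-kr/(2n)}$. (For $\delta = 1$ the left side is $0$, so there is nothing to prove.)

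For the first, sharper assertion I would take $c_0 = \tfrac13$ — in fact any fixed $c_0 < \tfrac12$ works — so that $r \leq c_0 n$ forces $\ell = 1$ and the bound above becomes $(1-\delta)^k + \delta^k$. Since then $\delta < \tfrac12 \leq 1-\delta$ and $k \geq 2$, we have $\delta^k \leq \delta^2 (1-\delta)^{k-2}$, and hence
\[
(1-\delta)^k + \delta^k \leq (1-\delta)^{k-2}\bigl((1-\delta)^2 + \delta^2\bigr) = (1-\delta)^{k-2}\bigl(1 - 2\delta + 2\delta^2\bigr).
\]
I would then bound $(1-\delta)^{k-2} \leq e^{-(k-2)\delta}$ and $1 - 2\delta + 2\delta^2 \leq e^{-2\delta + 2\delta^2}$ (the latter being just $e^x \geq 1+x$ with $x = -2\delta + 2\delta^2$), multiply the two, and invoke $k \geq 2$ once more to pass from $e^{-k\delta + 2\delta^2}$ to $e^{-k\delta + k\delta^2} = e^{-kr/n + kr^2/n^2}$, as desired.

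There is no genuinely difficult step here. The one point worth flagging is that the coarse bound $(1-\delta)^{k-1}$ is too lossy for the sharper estimate when $\delta$ is small: already for $k = 2$ and $\delta < \tfrac14$ one has $(1-\delta)^{k-1} = 1-\delta > e^{-k\delta + k\delta^2}$. So it is essential to retain the exact value $(1-\delta)^k + \delta^k$ of the discrete optimum in the regime $\ell = 1$ rather than simplifying prematurely; once that is done, the rest is routine.
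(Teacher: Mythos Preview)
Your argument is correct, and for the second statement it is exactly the paper's proof. For the first statement you take a slightly different route than the paper: the paper observes that $\bigl((1-\delta)^k + \delta^k\bigr)^{1/k}$ is decreasing in $k$, reduces to $k=2$, and then compares $1 - 2\delta + 2\delta^2$ with $e^{-2\delta + 2\delta^2}$ by Taylor expansion, concluding only that the inequality holds for $\delta$ sufficiently small (hence some $c_0>0$ exists). Your algebraic factorization $\delta^k \leq \delta^2(1-\delta)^{k-2}$ avoids the monotonicity-in-$k$ step and the asymptotic comparison, and in fact yields the inequality for every $\delta < \tfrac12$ with an explicit $c_0$; the paper's version is a hair more conceptual, yours more constructive. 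Your closing remark about why one must keep $(1-\delta)^k + \delta^k$ rather than the coarser bound $(1-\delta)^{k-1}$ is a useful observation that the paper does not make explicit.
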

\begin{proof}
  Set $\delta =\frac{r}{n} \leq c_0$. We may assume that $c_0 \leq
\frac{1}{2}$.  Then the first bound for the maximum reduces to $\delta^k +
(1-\delta)^k$ and we require the statement
\[
\forall\; 0 \leq \delta \leq c_0, \quad (\delta^k + (1-\delta)^k)^{\frac{1}{k}}
\leq e^{-\delta + \delta^2}.
\]
Write the left hand side as $(1-\delta) \left(1 +
\left(\frac{\delta}{1-\delta}\right)^k\right)^{\frac{1}{k}}$.  Then it is
readily checked with calculus that this is decreasing in $k$, hence maximized at
$k = 2$. Since
\[
(1-\delta)^2 + \delta^2 = 1 - 2\delta + 2 \delta^2, \qquad e^{-2\delta +2
\delta^2} = 1-2\delta +4\delta^2 + O(\delta^3)
\]
it follows that the left is bounded by the right for $\delta$ sufficiently
small.

For the second statement, we use the second bound of the maximum, so that we
need to show $(1-\delta)^{k-1} \leq e^{-\frac{k\delta}{2}}$.  The worst case is
$k = 2$, which reduces to the true inequality $(1-\delta) \leq e^{-\delta}$ for
$0 \leq \delta \leq 1$.
\end{proof}

We can now prove the case $2 \leq k \leq 6\log n$ of Proposition
\ref{char_ratio_prop}.

\begin{proof}[Proof of Proposition \ref{char_ratio_prop} for $k \leq 6\log n$]
As usual, assume $a_1 \geq b_1$.  As in the proof of the
Proposition in the case $k > 6 \log n$, we write $\lambda_1 = a_1 + \frac{1}{2}
= n-r$.

For $r \leq n^{\frac{5}{6}}$ we appeal to part a. of Theorem
\ref{character_ratio_theorem}, with the main term $\MT$ from Lemma
\ref{MT_lemma}.
 This gives
 \[
  \left|\frac{\chi^\lambda(C)}{f^\lambda}\right| \leq e^{\frac{-kr}{n}} + O
\left(\exp\left(-k\left(\log \frac{n}{r} +O(1)\right)\right)\right).
 \]
In this range, $\exp(-\frac{kr}{n}) = 1 - \frac{kr}{n} +
O\left(\left(\frac{kr}{n}\right)^2\right) = 1-o(1)$, so that, since $k \geq 2$,
the error term is
negligible, and the condition of the first criterion lemma, Lemma
\ref{the_target_lemma}, is satisfied.

Now suppose $r > n^{\frac{5}{6}}$.  
Let $c_0$ be the constant from Lemma \ref{calculus_lemma}.  Let $c_1$ be a
constant, such that, for $r \leq c_1 n$, $\frac{r}{\log r} \leq \frac{n}{2\log
n}$. Notice that this implies $\frac{r^2}{n^2} \leq \frac{r \log r}{2n \log n}$.
 For $r \leq \min(c_0, c_1) n$, Lemma \ref{calculus_lemma} implies that 
\[
 \log\left(\sum \frac{a_i^k}{n^k} + \sum \frac{b_i^k}{n^k}\right) \leq
\frac{-kr}{n} + \frac{k
r^2}{n^2} \leq \frac{-kr}{n} + \frac{kr\log r}{2n\log n}, 
\]
so that this quantity is bounded by the first term in the maximum of the Small
$k$ criterion lemma, Lemma
\ref{condition_lemma}. Thus
we may assume that $r > \min(c_0, c_1)n$.  In this case, using that $\log r =
\log n + O(1)$, the second bound of
Lemma \ref{calculus_lemma} implies that for a sufficiently large constant $c$
\[
 \log\left(\sum \frac{a_i^k}{n^k} + \sum \frac{b_i^k}{n^k}\right) \leq
\frac{-kr}{2n} \leq
\frac{-kr}{n} + \frac{kr \log r}{2 n \log n} + \frac{ c kr}{n \log n}.
\]
Thus again this is bounded by the first term in the maximum of Lemma
\ref{condition_lemma}.

\end{proof}

\appendix

\section{Proof of the lower bound in Conjecture
\ref{main_conjecture}}\label{lower_bound_appendix}
In this appendix we show a proof of the lower bound in Conjecture
\ref{main_conjecture} for any conjugacy class $C$ on $S_n$ having $k = k_n =
o(n)$ non-fixed points. 

Let the number of 2-cycles in $C$ be 
$j\leq \frac{k}{2}$. The proof goes by comparing the distribution
of
the number of fixed points in a randomly chosen permutation, chosen either
according to
 uniform measure or to $\mu_C^{*t}$. 
 
 Note that expectation against uniform measure at step $t$ is the same as
calculating the $L^2$ inner product with $\chi^n +\sgn(C)^t
\chi^{1^n}$.  By the formulas in
(\ref{small_chars}), $\chi^{n-1,1}(\sigma)$ is one less than the number of
fixed points in $\sigma$, and 
\begin{align*}\E_{U_t}(\chi^{n-1,1}) &= 0\\
 \E_{U_t}((\chi^{n-1,1})^2) &=
\E_{U_t}(\chi^n +
\chi^{n-1,1} + \chi^{n-2, 2} + \chi^{n-2,1,1}) = 1.
\end{align*}
Thus according to uniform measure $\chi^{n-1,1}$ has mean 0 and variance
$1$. 

With respect to $\mu_C^{*t}$ we readily check that for any $\lambda \vdash n$,
(see (\ref{fourier_expansion}) and (\ref{convolution}))
\[
 \E_{\mu_C^{*t}}(\chi^\lambda) = f^\lambda
\left(\frac{\chi^{\lambda}(C)}{f^\lambda}\right)^{t}.
\]
Thus,
\begin{align*}
  \E_{\mu_C^{*t}}(\chi^{n-1,1}) &= \sum_{\sigma \in S_n}
\mu_C^{*t}(\sigma)\chi^{n-1,1}(\sigma) =
f^{n-1,1}\left(\frac{\chi^{n-1,1}(C)}{f^{n-1,1}}\right)^t \\&= (n-1)\left(1 -
\frac{k}{n-1}\right)^t
\end{align*}
and similarly the second moment generates contributions from $\chi^n$,
$\chi^{n-1,1}$, $\chi^{n-2,2}$ and $\chi^{n-2,1,1}$,
\begin{align*}
 \E_{\mu_C^{*t}}&((\chi^{n-1,1})^2) = 1 + (n-1)\left(1-\frac{k}{n-1}\right)^t
\\&+ \frac{(n-1)(n-2)}{2}\left(\frac{(n-1-k)(n-2-k)-2j}{(n-1)(n-2)}\right)^t\\&
+ \frac{(n-1)(n-2)-2}{2}\left(\frac{(n-1-k)(n-2-k)+2j-2}{(n-1)(n-2)-2}\right)^t.
\end{align*}
Obviously
\[
 \left(\frac{(n-1-k)(n-2-k)-2j}{(n-1)(n-2)}\right)^t \leq \left(1 -
\frac{k}{n-1}\right)^{2t}.
\]
Also, 
since $2j \leq k$, 
\[
 \left(\frac{(n-1-k)(n-2-k)+2j-2}{(n-1)(n-2)-2}\right)^t \leq \left(1 -
\frac{k}{n-1}\right)^{2t} \exp\left(O\left(\frac{tk}{n^2}\right)\right).
\]

For $t = (1-\varepsilon)\frac{n}{k}\log n$, $\varepsilon > 0$ we deduce
\begin{align*}
 &\E_{\mu_C^{*t}}(\chi^{n-1,1}) = \exp\left(\varepsilon \log n + O\left(\frac{k
\log n}{n}\right)\right),\\
 &\Var_{\mu_C^{*t}} = \E_{\mu_C^{*t}}((\chi^{n-1,1})^2) -
\E_{\mu_C^{*t}}(\chi^{n-1,1})^2 \\& \qquad \qquad \leq 1+
\E_{\mu_C^{*t}}(\chi^{n-1,1}) \\ & \qquad\qquad\qquad+ (n-1)^2 \left(1 -
\frac{k}{n-1}\right)^{2t}\left(\exp\left(O\left(\frac{tk}{n^2}\right)\right) -
1\right)\\& \qquad \qquad \leq 1+\E_{\mu_C^{*t}}(\chi^{n-1,1}) +
O\left(\frac{\log n}{n^{1-2\varepsilon}}\right).
\end{align*}
Let 
\[
A = \left\{\sigma \in  S_n: \chi^{n-1,1}(\sigma) >
\left(\E_{\mu^{*t}_C}(\chi^{n-1,1}) \right)^{\frac{1}{2}}\right\}.
\]  
Applying Chebyshev's inequality, first with measure $U_t$ and then with measure
$\mu^{*t}_C$, we deduce that
\begin{align*}
\|\mu^{*t}_C - U_t\|_{T.V.} &\geq  P_{\mu^{*t}_C}(A) - P_{U_t}(A) \\&\geq 1 -
(2+o(1))
\exp\left(-\varepsilon \log n + O\left(\frac{k \log n}{n}\right)\right),
\end{align*}
which completes the proof.  Note that if $k = o\left(\frac{n}{\log n}\right)$
then we may take $\varepsilon$ on the scale of $\frac{1}{\log n}$.

\section{The bound for dimension}\label{dimension_bound_appendix}

Recall the dimension formula ($\mu_i = \lambda_i + n - i$)
\[
 f^\lambda = \frac{n!}{\mu_1!\mu_2!...\mu_n!}\prod_{i < j} (\mu_i - \mu_j).
 \]
In our character ratio bounds we set apart the terms pertaining to
$\lambda_1$ to find
\[ 
 f^\lambda = \binom{n}{\lambda_1}\prod_{j=2}^n \frac{\lambda_1-\lambda_j +
j-1}{\lambda_1 + j-1} f^{\lambda-\lambda_1}.
\]
Of course, one could extract more rows from the partition $\lambda$, and, since
$f^\lambda = f^{\lambda'}$, removing columns is also possible.  This makes
plausible the following useful result of Larsen-Shalev concerning
dimensions.
\begin{theorem}[\cite{larsen_shalev} Theorem 2.2]\label{larsen_shalev_theorem}
 Write $\lambda \vdash n$ in Frobenius notation with diagonal of length $m$, and
set $a_i' = \lambda_i - i = a_i - \frac{1}{2}$, $b_i' =\lambda_i'-i = b_i -
\frac{1}{2}$.  Then
\[ 
 \log f^\lambda = (1 + o(1)) \log D(\lambda)
\]
where 
\[
 D(\lambda) = \frac{(n-1)!}{\prod_{i = 1}^m a_i'!b_i'!}.
\]
The error term holds as $n \to \infty$ and is independent of the partition
$\lambda\vdash n$.
\end{theorem}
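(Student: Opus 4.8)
The plan is to extract a closed form for $f^\lambda$ from the hook length formula and then to control its discrepancy with $D(\lambda)$ by elementary estimates. Writing the hook lengths of $\lambda$ in Frobenius coordinates yields the identity
\[
\prod_{\square\in\lambda}h(\square)=\frac{\prod_{i=1}^{m}a_i'!\,b_i'!\;\prod_{i,j=1}^{m}(a_i'+b_j'+1)}{\prod_{1\le i<j\le m}(a_i'-a_j')(b_i'-b_j')},
\]
which is equivalent to the Frobenius determinant formula for the dimension (see \cite{macdonald}). Setting $V=\prod_{i<j}(a_i'-a_j')(b_i'-b_j')$ and $W=\prod_{i,j}(a_i'+b_j'+1)$, this says exactly that $f^\lambda=n\,D(\lambda)\,V/W$, so the theorem is equivalent to the uniform estimate $\log n+\log V-\log W=o(\log D(\lambda))$; the whole task is to locate the size of $W/V$.

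For the upper direction I would pair the $m^2$ factors of $W$ against those of $V$: for each $i<j$ pair $a_i'+b_j'+1$ with $a_i'-a_j'$ and $a_j'+b_i'+1$ with $b_i'-b_j'$, leaving unpaired only the $m$ diagonal factors $h_i:=a_i'+b_i'+1$ (which satisfy $\sum_i h_i=n$, with $m\le\sqrt n$). Since $a_i'-a_j'<a_i'+b_j'+1$ and $b_i'-b_j'<a_j'+b_i'+1$, this gives $V/W<\prod_i h_i^{-1}\le1$, hence $f^\lambda<n\,D(\lambda)$ and $\log f^\lambda\le\log D(\lambda)+\log n$. This settles the theorem whenever $\log D(\lambda)/\log n\to\infty$. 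The exceptional shapes, those with $\log D(\lambda)=O(\log n)$, are precisely those with $\lambda_1$ or $\lambda_1'$ within $O(1)$ of $n$: from $D(\lambda)\ge(n-1)^{\underline{m-1}}$ one first deduces that $m$ is bounded, and then, since $\prod_i a_i'!b_i'!=(n-1)!/D(\lambda)$ must be within a polynomial factor of $(n-1)!$ while $\sum_i(a_i'+b_i')=n-m$, that $\max(\lambda_1,\lambda_1')=n-O(1)$. For these few shapes I would evaluate $f^\lambda$ directly from the row-peeling identity $f^\lambda=\binom{n}{\lambda_1}\bigl(\prod_{j\ge2}\tfrac{\lambda_1-\lambda_j+j-1}{\lambda_1+j-1}\bigr)f^{\lambda\setminus\lambda_1}$ and check $\log f^\lambda=(1+o(1))\log D(\lambda)$ by hand.

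The harder, decisive step is the lower bound, which after the same pairing reads
\[
\log W-\log V=\sum_{i<j}\log\frac{(a_i'+b_j'+1)(a_j'+b_i'+1)}{(a_i'-a_j')(b_i'-b_j')}+\sum_i\log h_i\le\log n+o(\log D(\lambda)).
\]
Here I would use that $a_1'>a_2'>\cdots\ge0$ and $b_1'>b_2'>\cdots\ge0$ are integers, so that $a_i'-a_j'\ge j-i$ and $b_i'-b_j'\ge j-i$, together with $\sum_i h_i=n$. The crux is that the super-factorial $\prod_{i<j}(j-i)^2=\bigl(\prod_{k=1}^{m-1}k!\bigr)^2$ thereby placed in the denominator essentially cancels the numerator $\prod_{i<j}(a_i'+b_j'+1)(a_j'+b_i'+1)$ --- in the balanced regime, where $m\asymp\sqrt n$ and both sides of the theorem are of order $n\log n$, one has $a_i'-a_j'\asymp j-i$ and the two logarithms are genuinely close, each of order $m^2\log m$ --- so that the true remainder is of lower order; one must then carry out a careful Stirling expansion and bound the remainder against $\log D(\lambda)=\log(n-1)!-\sum_i(\log a_i'!+\log b_i'!)$ using the same structural inequalities. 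I expect this cancellation argument, and in particular its uniformity over all $\lambda$ at once --- simultaneously over the balanced shapes and, at the other extreme, over shapes with one very long Frobenius hook, where $\log D(\lambda)$ is small and one again falls back on the explicit row-peeling evaluation --- to be the main obstacle of the proof.
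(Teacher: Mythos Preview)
The paper does not prove this statement. Theorem~\ref{larsen_shalev_theorem} is quoted verbatim from \cite{larsen_shalev} and used as a black box in the proof of Proposition~\ref{dimension_prop}; the surrounding appendix contains no argument for it. So there is no ``paper's own proof'' to compare your proposal against.

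On the proposal itself: your starting identity $f^\lambda=n\,D(\lambda)\,V/W$ is correct, and your pairing argument cleanly gives $f^\lambda<nD(\lambda)$, which is already one half of the theorem. Your classification of the exceptional shapes with $\log D(\lambda)=O(\log n)$ is also right. But the substantive content of the theorem is the lower bound $\log(W/V)=o(\log D(\lambda))$, and here you have only a plan, not a proof. You correctly identify the mechanism --- the superfactorial lower bound $V\ge\bigl(\prod_{k<m}k!\bigr)^2$ against $W$ --- but you do not carry out the cancellation, and you explicitly flag its uniformity over all $\lambda$ as ``the main obstacle.'' That is an honest assessment: bounding $\sum_{i<j}\log\frac{(a_i'+b_j'+1)(a_j'+b_i'+1)}{(a_i'-a_j')(b_i'-b_j')}$ by $o(\log D(\lambda))$ uniformly is genuinely delicate, since for shapes with several long Frobenius arms or legs the individual ratios can be large while $\log D(\lambda)$ is only moderately so. Larsen and Shalev's own argument (their Section~2) proceeds along broadly similar lines but organises the estimate differently; if you want to complete your version you should consult their paper for the bookkeeping that makes the bound uniform.
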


Using this theorem, we now prove Proposition \ref{dimension_prop}.

\begin{proof}[Proof of Proposition \ref{dimension_prop}]
 In this proof we suppress the $\prime$ on $a_i$ and $b_i$.  Also, we write
$\bN^{\underline{m}}$ to indicate strictly decreasing length $m$ vectors of
non-negative
natural numbers, and given such a vector $\underline{a}$ we
write $|\underline{a}| = \sum a_i$.

Recall that we intend to show that for a sufficiently large fixed $c > 0$,
uniformly in $n$ we have the estimate
\[
 \sum_{\lambda \vdash n} (f^\lambda)^{-\frac{c}{\log n}} = O(1).
\]
In view of Theorem
\ref{larsen_shalev_theorem}, it suffices to prove instead that, for a possibly
larger value of $c$,
\[ \sum_{\lambda \vdash n} D(\lambda)^{-\frac{c}{\log n}}=
(n-1)!^{-\frac{c}{\log n}} \sum_{m =
1}^{\sqrt{n}} \sum_{\substack{\underline{a}, \underline{b} \in
\bN^{\underline{m}}\\ |\underline{a}|+ |\underline{b}| =
n-m}}\prod_{i=1}^m(a_i!b_i!)^{\frac{c}{\log n}} = O(1).\] 
By Stirling's approximation, we reduce to showing that (the $\prime$ on the sum
indicates that if either $a_m = 0$ or $b_m = 0$, its contribution is
excluded)
\begin{align*}
 &\sum_{m=1}^{\sqrt{n}} \sum_{\substack{\underline{a}, \underline{b} \in
\bN^{\underline{m}}\\ |\underline{a}|+ |\underline{b}|\\= n-m}}
\exp\left(\frac{c}{\log n}{\sum_{i=1}^m}' \left(\left(a_i +
\frac{1}{2}\right)\log
\frac{a_i}{e} +
\left(b_i + \frac{1}{2}\right)\log \frac{b_i}{e} + O(1)\right)\right)\\ &
\qquad\ll
\exp\left(\frac{c}{\log n}\left(\left(n -\frac{1}{2}\right) \log n -n +
O(1)\right)\right).
\end{align*}
Use that $\sum(a_i + \frac{1}{2}) + \sum(b_i + \frac{1}{2}) = n$ to cancel the
$-n$ term.  Thus we seek a bound of $O(\exp(cn))$ for 
\begin{align}\notag
 &\sum_{m=1}^{\sqrt{n}}\exp\left(\frac{O(cm)}{\log n}\right)
\\\label{ab_sum}&{\sum_{\substack{\underline{a}, \underline{b} \in
\bN^{\underline{m}}\\ |\underline{a}|+ |\underline{b}| = n-m}}}'
\exp\left(\frac{c}{\log n}\sum_{i=1}^m \left(\frac{2a_i + 1}{2}\log a_i +
\frac{2b_i + 1}{2}\log b_i\right)\right).
\end{align}

By symmetry we may assume that $a_1 \geq b_1$.  Set $a_1+1 =
\lambda_1 = M$, and $\Delta = n-m^2 -(M-m)$. Thus $\Delta$ is the number
of
boxes of partition $\lambda$ that are neither contained in the first row, nor
contained in the square determined by the diagonal. We consider
separately
the cases $\Delta \leq n^{\frac{3}{4}}$ and $\Delta > n^{\frac{3}{4}}$.

First consider $\Delta \leq n^{\frac{3}{4}}$.  Evidently $\max(b_1, a_2) \leq m
+
\Delta$ so that, for fixed $\underline{a}, \underline{b}$ satisfying $\Delta
\leq
n^{\frac{3}{4}}$, the sum
inside the exponential of (\ref{ab_sum}) is bounded by
\begin{align*}
 &\left(M-\frac{1}{2}\right)\log (M-1) + \log(m+\Delta)\left(\sum_{j=2}^m
\left(a_j + \frac{1}{2}\right) +
\sum_{j=1}^m
\left(b_j+\frac{1}{2}\right) \right) \\& \quad\leq (n-m^2 + m -
\Delta)\log(n-m^2 + m-\Delta)\\ & \qquad\qquad \qquad
\qquad\qquad\qquad\qquad\qquad + (m^2
- m
+\Delta)\log(m+\Delta).
\end{align*}
Thinking of $a_1$ as fixed, we wish to bound the number of possible strings
$a_2, ..., a_m$, $b_1, ...,
b_m$.  To do this, observe that the numbers $\lambda_2 - m, \lambda_3-m, ...,
\lambda_m - m$ together with $\lambda_1' -m, \lambda_2'-m, ..., \lambda_m'-m $
are a pair of partitions of combined length $\Delta$.  From the well-known
bound for $p(L)$ the number of partitions of an integer $L$, it follows that the
number
of possible choices for $\lambda_2, ..., \lambda_m$, $\lambda_1', ...,
\lambda_m'$ (or, equivalently $a_2, ..., a_m, b_1, ..., b_m$) is at most
\begin{align*}
 \sum_{\Delta_1 + \Delta_2 = \Delta} p(\Delta_1)p(\Delta_2) &\leq \sum_{\Delta_1
+ \Delta_2 = \Delta} \exp\left(O\left(\sqrt{\Delta_1}+
\sqrt{\Delta_2}\right)\right)\\& \leq
\exp\left(O\left(\sqrt{\Delta}\right)\right).
\end{align*}
  
We can now bound the contribution to (\ref{ab_sum}) from $\Delta
\leq n^{\frac{3}{4}}$. It
is
bounded by
\begin{align*}
 &\sum_{m=1}^{\sqrt{n}}\exp\left(\frac{O(cm)}{\log n}\right)\\ &
\quad \sum_{\Delta =
0}^{n^{\frac{3}{4}}}
\exp\biggl(O(\sqrt{\Delta}) +\frac{c}{\log n}\Bigl((n-m^2 +
m-\Delta)\log(n-\Delta)\\&\qquad \qquad\qquad\qquad +
(m^2-m+\Delta)\log(m+\Delta)\Bigr)\biggr).
\end{align*}
Exchanging the order of summation, this has the bound
\begin{align*}
 &\sum_{\Delta = 0}^{n^{\frac{3}{4}}}\exp\left( \frac{c}{\log n}
(n-\Delta)\log(n-\Delta) + O(\sqrt{\Delta})\right) 
\\
& \qquad \qquad \sum_{m=1}^{\sqrt{n}} \exp\left(\frac{c}{\log n}\left((-m^2 +
m)\log \frac{n-\Delta}{m+\Delta} + O(m)\right)\right)
\\& \ll \sum_{\Delta = 0}^{n^{\frac{3}{4}}}\exp\left( \frac{c}{\log n}
(n-\Delta)\log(n-\Delta) + O(\sqrt{\Delta})\right)  \ll \exp(cn).
\end{align*}

When $\Delta > n^{\frac{3}{4}}$ we bound the sum inside the exponential of
(\ref{ab_sum})
crudely.  Notice that $\max(a_i, b_i) \leq a_1 < n-\Delta$, so that  $\log a_i,
\log b_i \leq \log (n-
n^{\frac{3}{4}})$.  Thus the sum inside the exponential of (\ref{ab_sum})
is bounded by
\[
 \log(n-n^{\frac{3}{4}}) \sum_{i =1}^m \left(\frac{2a_i + 1}{2} + \frac{2b_i +
1}{2}\right) = n \log n - n^{\frac{3}{4}} + o(n^{\frac{3}{4}}).
\]
The total number of such possible $a_i, b_i$ is at most the number of partitions
of $n$, which is $\exp(O(\sqrt{n}))$.  It follows that the contribution of
$\Delta >
n^{\frac{3}{4}}$ to (\ref{ab_sum}) is at most
\[
 \ll \exp\left(cn - c\frac{n^{\frac{3}{4}}}{\log n} +
o\left(\frac{n^{\frac{3}{4}}}{\log n}\right)\right).
\]
Combining this bound with the bound for small $\Delta$ proves the proposition.

\end{proof}

\section{Contour formula for characters of $S_n$}\label{contour_appendix}

We conclude by deriving the extension of Frobenius' character formula to
character ratios of arbitrary conjugacy classes on $S_n$.  This derivation is
modeled upon the derivation of Frobenius' formula from \cite{macdonald}, p. 118.

In this appendix we write partitions in the standard row
notation, so for $\lambda \vdash n$,  $\lambda = (\lambda_1, \lambda_2,
..., \lambda_n)$ counts the number of boxes in each row.  Also 
$\delta = (n-1, n-2, ..., 0)$ and $\mu = \lambda + \delta = (\lambda_1 + n-1,
\lambda_2 + n-2, ..., \lambda_n),$ and  $\mu! =
\mu_1!\mu_2!...\mu_n!$.

Set $\ve_i$ for the standard basis
vector on $\bR^n$. Let $\vx = (x_1, ..., x_n)$ and denote the
Vandermonde of $n$ variables by
\[
 \Delta(\vx) = \prod_{1 \leq i < j \leq n} (x_i - x_j) = \sum_{\sigma \in S_n}
\sgn(\sigma) x_1^{\sigma(n)-1}...x_n^{\sigma(1)-1}.
\]

Let $\vlambda, \vrho \vdash n$ be two
partitions of $n$ with $\vlambda$ indexing
an $f^\vlambda$ dimensional  representation of $S_n$,  and $\vrho$
indexing a conjugacy class.  
Following \cite{macdonald}, we start from the
classical fact that the character $\chi_\rho^\lambda$ is equal to the
coefficient of the monomial $\vx^\mu = x_1^{\mu_1}...x_n^{\mu_n}$ in
\[
 \left(\sum_{i=1}^n x_i^{\rho_1}\right) \left(\sum_{i=1}^n
x_i^{\rho_2}\right)...\left(\sum_{i=1}^n x_i^{\rho_n}\right) \Delta(\vx).
\]
The key calculation of \cite{macdonald} is that, for any $m \geq 1$ and for
any $\mu$ as above, 
\[
 \coeff_{\vx^\mu} \left(\sum_{i=1}^n x_i\right)^m \Delta(\vx) = \frac{m!}{\mu!}
\Delta(\mu).
\]
The calculation is as follows.  Expanding $\Delta$,
\begin{align*}
 \coeff_{\vx^\mu} \left(\sum_{i=1}^n x_i\right)^m \Delta(\vx) &= \sum_{\sigma
\in S_n} \sgn(\sigma) \coeff_{\vx^{\mu - \sigma + 1}} \left(\sum_{i=1}^n
x_i\right)^m\\
&= \sum_{\sigma
\in S_n} \sgn(\sigma) \binom{m}{\mu - \sigma + 1}
\\ &= \frac{m!}{\mu!} \det \left[\frac{\mu_i!}{(\mu_i - \sigma_j +
1)!}\right]\\ &= \frac{m!}{\mu!}
\det\left[\mu_i^{\underline{\sigma_j-1}}\right] = \frac{m!}{\mu!}\Delta(\mu).
\end{align*}

In particular, this gives the dimension formula,
\[
 f^\lambda = \frac{n!}{\mu!} \Delta(\mu)
\]
and, when $\rho = (k, 1^{n-k})$ is the class of a single $k$-cycle, 
\begin{equation}\label{k_cycle_formula}
 \chi^\lambda_{(k, 1^{n-k})} = (n-k)! \sum_{i=1}^n \frac{\Delta(\mu - k
\ve_i)}{(\mu - k \ve_i)!}.
\end{equation}
Here, a term $i$ with $k > \mu_i$ is to be omitted.  Thus
\begin{equation}\label{k_cycle_ratio}
 \frac{\chi^\lambda_{(k, 1^{n-k})}}{f^\lambda} = \frac{1}{n^{\underline{k}}}
\sum_{i=1}^n \frac{\mu_i^{\underline{k}} \Delta(\mu - k\ve_i)}{
\Delta(\mu)}.
\end{equation} 
Expanding the Vandermonde, one may check by inspection that the sum of 
(\ref{k_cycle_ratio}) is equal to the sum of the (finite) residues of the
meromorphic function 
\begin{equation*}
\frac{G_k^\mu(z)}{n^{\underline{k}}} :=
\frac{-z^{\underline{k}}}{kn^{\underline{k}}} \prod_{i=1}^n
\frac{z-\mu_i-k}{z-\mu_i}.
\end{equation*}
This last fact is the version of Frobenius' formula derived in
\cite{macdonald}. The equivalence between this formulation from \cite{macdonald}
and the integral formula used throughout the present paper,
\[
 \frac{\chi^{\lambda}_{(k, 1^{n-k})}}{f^\lambda} = \frac{-1}{k
n^{\underline{k}}}\oint F_k^{a,b}(z) dz,
\]
follows from the identity \[G_k^\mu(z) = \frac{-F_k^{a,b}\left(z-n -
\frac{k-1}{2}\right)}{k},\] see e.g. \cite{fulton_harris} pp. 51--52.

Now consider a conjugacy class with non-trivial cycles \[\vk = k_1 \geq k_2 \geq
... \geq k_r>1, \qquad \sum k_i = k,\] so that $\rho = (k_1, ..., k_r,
1^{n-k})$. The appropriate generalization of (\ref{k_cycle_formula}) and
(\ref{k_cycle_ratio}) are
\begin{equation}\label{general_formula}
 \chi^\lambda_{(k_1, ..., k_r, 1^{n-k})} = (n-k)! \sum_{1 \leq i_1, ..., i_r
\leq n} \frac{\Delta(\mu - k_1 \ve_{i_1} - ... - k_r \ve_{i_r})}{(\mu - k_1
\ve_{i_1} - ... - k_r \ve_{i_r})!},
\end{equation}
again with the convention that a term with negative coordinate is to be
omitted.  It follows 
\begin{equation}\label{general_ratio}
 \frac{\chi^\lambda_{(k_1, ..., k_r, 1^{n-k})}}{f^\lambda} =
\frac{1}{n^{\underline{k}}} \sum_{1 \leq i_1, ..., i_r \leq n}
\frac{\mu^{\underline{\sum k_j \ve_{i_j}}}
\Delta(\mu -\sum k_j \ve_{i_j} )}{
\Delta(\mu)}.
\end{equation} 
The formula (\ref{general_formula}) follows, as does (\ref{k_cycle_formula}),
by expanding
\begin{align*}
 &\coeff_{\vx^\mu} \left(\sum x_i^{k_1}\right)\left(\sum
x_i^{k_2}\right)...\left(\sum x_i^{k_r}\right) \left(\sum x_i\right)^{n-k}
\Delta(\vx) \\&= \sum_{1 \leq i_1, ..., i_r \leq n} \coeff_{\vx^{\mu - \sum_j
k_j \ve_{i_j}}} \left(\sum x_i\right)^{n-k} \Delta(\vx).
\end{align*}

We now check that
(\ref{general_ratio}) may be evaluated by a multiple contour integral.

\begin{theorem}
 Let $\lambda \vdash n$ index an irrep. of $S_n$, and let $\vk = k_1 \geq k_2
\geq ... \geq k_r$, $k = \sum k_i \leq n$ denote the non-trivial cycles in
conjugacy class $C$.  Then 
\begin{equation}\label{general_integral_formula}
 \frac{\chi^\lambda(C)}{f^\lambda} = \frac{1}{n^{\underline{k}}}
\left(\frac{1}{2\pi i}\right)^r \oint\cdots \oint_{\cC_1 \times ...
\times \cC_r} G_\vk^\mu(\vz) d\vz,
\end{equation}
with
\[
 G_\vk^\mu(\vz) = \prod_{i=1}^r G_{k_i}^\mu(z_i) \times \prod_{1 \leq i < j \leq
r} \frac{(z_i - z_j) (z_i -z_j -k_i + k_j)}{(z_i - z_j - k_i)(z_i - z_j + k_j)}
\]
and each $G_{k_i}^\mu$ given by
\[G_{k_i}^\mu(z_i) =
\frac{-z^{\underline{k_i}}}{k_i} \prod_{j=1}^n
\frac{z_i-\mu_j-k_i}{z_i-\mu_j}.\]
  The contours $\cC_1,
..., \cC_r$ are to be chosen  such that $\cC_1$ has winding
number 1 about each pole of $G_{k_1}^\mu(z_1)$, and in general, $\cC_i$ has
winding number 1 about each pole of $G_{k_i}^\mu(z_i)$ and also, viewed
as loops in a single complex plane, encloses $(\cC_j +
k_i) \cup (\cC_j - k_j)$ for all $1 \leq j < i$.
\end{theorem}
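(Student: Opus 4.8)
The plan is to evaluate the $r$-fold integral in (\ref{general_integral_formula}) by iterated residues and match the result term by term with the sum (\ref{general_ratio}), which is already established; after cancelling the common factor $\tfrac1{n^{\underline k}}$ it suffices to prove
\[
\left(\frac{1}{2\pi i}\right)^{r}\oint\cdots\oint_{\cC_1\times\cdots\times\cC_r} G_\vk^\mu(\vz)\,d\vz \;=\; \sum_{1\le i_1,\ldots,i_r\le n}\frac{\mu^{\underline{\sum_j k_j\ve_{i_j}}}\,\Delta(\mu-\sum_j k_j\ve_{i_j})}{\Delta(\mu)}.
\]
I would integrate the variables in the order $z_1,\ldots,z_r$, i.e.\ from the innermost contour outward. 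Taking $\cC_1$ to be a union of small circles about the poles $\mu_l$ of $G^\mu_{k_1}$ (those $\mu_l<k_1$ being cancelled by $z^{\underline{k_1}}$) and each later $\cC_s$ a large circle whose radius dominates that of $\cC_{s-1}$, the nesting conditions placed on the $\cC_s$ guarantee that at stage $s$, with $z_1,\ldots,z_{s-1}$ already fixed at residue locations and $z_{s+1},\ldots,z_r$ still ranging over their contours, the poles of $z_s\mapsto G_\vk^\mu(\vz)$ inside $\cC_s$ are exactly the poles $z_s=\mu_l$ of $G_{k_s}^\mu$ together with the finitely many poles $z_s=z_t-k_t$ and $z_s=z_t+k_s$ ($t<s$) introduced by the cross-factors $\frac{(z_t-z_s)(z_t-z_s-k_t+k_s)}{(z_t-z_s-k_t)(z_t-z_s+k_s)}$; this is a routine verification.

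The core computation is a multivariate analogue of the single-cycle identity recalled above. If $z_1,\ldots,z_r$ are set equal to \emph{distinct} poles $\mu_{i_1},\ldots,\mu_{i_r}$, then the corresponding multi-residue of $G_\vk^\mu$ equals $\mu^{\underline{\sum_j k_j\ve_{i_j}}}\,\Delta(\mu-\sum_j k_j\ve_{i_j})/\Delta(\mu)$. Since $G_{k_s}^\mu$ has a pole at \emph{every} $\mu_l$, the residue at $z_s=\mu_{i_s}$ throws off spurious factors $\prod_{t\ne s}\tfrac{\mu_{i_s}-\mu_{i_t}-k_s}{\mu_{i_s}-\mu_{i_t}}$; the cross-factors are rigged so that, evaluated at $(\mu_{i_1},\ldots,\mu_{i_r})$, they collapse with these spurious factors to precisely the cross terms $\prod_{s<t}\tfrac{\mu_{i_s}-\mu_{i_t}-k_s+k_t}{\mu_{i_s}-\mu_{i_t}}$ occurring in $\Delta(\mu-\sum_j k_j\ve_{i_j})/\Delta(\mu)$. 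This is a short algebraic manipulation.

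It remains to produce the coincidence terms of (\ref{general_ratio}) and to rule out spurious residues. Two cancellations govern this. First, whenever the iterated residue would place $z_s$ at a position already occupied by an earlier $z_t$, the relevant cross-factor has a simple zero there — its numerator contains $(z_t-z_s)$, or, in a cascaded collision, a factor $(z_{t'}-z_s-k_{t'}+k_s)$ for the root $t'$ of the block containing $t$ — which cancels the offending simple pole; in particular the pole $z_s=z_t+k_s$ always contributes $0$. Second, the genuinely new pole $z_s=z_t-k_t$ reproduces exactly the term of (\ref{general_ratio}) in which $i_s$ is set equal to $i_t$: the constants collapse via $\tfrac{-1}{k_s}\cdot\tfrac{k_t+k_s}{k_t}\cdot\tfrac{k_tk_s}{k_t+k_s}\cdot(-1)=1$, the falling powers via $\mu_{i_t}^{\underline{k_t}}(\mu_{i_t}-k_t)^{\underline{k_s}}=\mu_{i_t}^{\underline{k_t+k_s}}$, and the residue-ratio products telescope to $\prod_{l\ne i_t}\tfrac{\mu_{i_t}-\mu_l-k_t-k_s}{\mu_{i_t}-\mu_l}$. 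Organizing these observations into an induction on $r$ — peel off the last cycle, integrating $z_r$ last: it either lands on a fresh $\mu_l$, extending every surviving configuration of $(i_1,\ldots,i_{r-1})$ by a distinct new index, or it merges into an earlier block through a pole $z_r=z_t-k_t$, while all other candidate poles cancel — matches the expansion of $\sum_{(i_1,\ldots,i_r)}$ over its last coordinate and closes the argument.

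The main obstacle is this last bookkeeping in full generality: one must verify that the cascade of cross-factor poles and zeros enumerates every coincidence pattern among $i_1,\ldots,i_r$ exactly once and with the correct value, and (a milder point) that the nesting constraints on $\cC_1,\ldots,\cC_r$ are simultaneously realizable by one concrete family of contours, so that precisely the intended poles are enclosed at each stage. The single-cycle derivation of \cite{macdonald} supplies the base case of the induction and the template for the individual residue evaluations.
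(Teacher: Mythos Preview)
Your plan is correct in outline but takes a substantially harder road than the paper. You propose to compute the $r$-fold iterated residue directly, tracking which cross-factor poles survive and matching each coincidence pattern among $i_1,\ldots,i_r$ to a specific residue; you then acknowledge that this bookkeeping is the main obstacle. The paper avoids this entirely by a single algebraic identity: for any index $i_1$,
\[
G^{\hat{\vk}_1}_{\mu-k_1\ve_{i_1}}(\hat{\vz}_1)\;=\;G^{\hat{\vk}_1}_{\mu}(\hat{\vz}_1)\,\prod_{j=2}^{r}\frac{(z_1-z_j)(z_1-z_j-k_1+k_j)}{(z_1-z_j+k_j)(z_1-z_j-k_1)}\Bigg|_{z_1=\mu_{i_1}}.
\]
In words, shifting $\mu$ to $\mu-k_1\ve_{i_1}$ in the $(r-1)$-fold integrand is \emph{exactly} multiplication by the cross factors specialized at $z_1=\mu_{i_1}$. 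With this, one integrates $z_1$ first over the innermost contour $\cC_1$ (only the poles $\mu_{i_1}$ of $G^{k_1}_\mu$ lie inside), and each residue is precisely the $(r-1)$-fold integral of $G^{\hat{\vk}_1}_{\mu-k_1\ve_{i_1}}$, to which the inductive hypothesis applies verbatim. All coincidence terms are absorbed automatically into the shifted $\mu$, so no case analysis of merge patterns, cascaded collisions, or spurious poles is needed.

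Your induction, by contrast, is not cleanly set up: when you ``peel off the last cycle, integrating $z_r$ last'' after $z_1,\ldots,z_{r-1}$ have been pinned at residue locations, the inner $(r-1)$-fold object is not $G^{\hat{\vk}_r}_\mu$ but $G^{\hat{\vk}_r}_\mu$ times the $z_r$-dependent cross factors, so the inductive hypothesis does not apply directly and you are forced back into the full iterated-residue computation. Also, a small slip: the vanishing at the pole $z_s=z_t+k_s$ (with $z_t=\mu_{i_t}$) is not due to the cross-factor numerator as you say, but to the zero of $G^\mu_{k_s}$ coming from the factor $(z_s-\mu_{i_t}-k_s)$ in its numerator. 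None of this is fatal --- your residue identifications are essentially right --- but the paper's shifted-$\mu$ identity is the idea that turns the argument into a two-line induction.
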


\begin{proof}
We  check that
\begin{equation*}
  \sum_{1 \leq i_1, ..., i_r \leq n}
\mu^{\underline{\sum k_j \ve_{i_j}}}
\frac{\Delta(\mu -\sum k_j \ve_{i_j} )}{
\Delta(\mu)}= 
\left(\frac{1}{2\pi i}\right)^r \oint\cdots \oint_{\cC_1 \times ...
\times \cC_r} G_\vk^\mu(\vz) d\vz
\end{equation*}
holds for arbitrary $\mu \in \bC^n$, which immediately gives the Theorem.

The case
$r = 1$ is Frobenius' formula.
Suppose that $r \geq 2$ and that the formula holds in case $r-1$.  Let
$\hat{\vk}_1$ to indicate $k_2, ..., k_{r}$ with $k_1$
omitted and similarly $\hat{\vz}_1$.  Write 
\begin{align*}
&\sum_{1 \leq i_1, ..., i_r \leq n}
\frac{\mu^{\underline{\sum k_j \ve_{i_j}}}
\Delta(\mu -\sum k_j \ve_{i_j} )}{
\Delta(\mu)} =  \sum_{1 \leq i_1 \leq
n}
\mu_{i_1}^{\underline{k_1}} \frac{\Delta(\mu - k_1
\ve_{i_1})}{\Delta(\mu)}\times\\& \qquad\qquad\qquad\qquad \times
\sum_{1 \leq i_2, ..., i_r \leq n}\frac{(\mu - k_1
\ve_{i_1})^{\underline{\sum_{j >1
} k_j \ve_{i_j}} }\Delta(\mu - \sum_{j=1}^r k_j
\ve_{i_j})}{\Delta(\mu - k_1 \ve_{i_1})}.
\end{align*}
Invoking the
inductive assumption, the RHS becomes
\begin{align}\label{sum_integral}
& \sum_{1 \leq i_1 \leq n}
\left[z_1^{\underline{k_1}}\prod_{j \neq i_1} \frac{z_1 - \mu_j -
k_1}{z_1- \mu_j}\right]_{z_1 = \mu_{i_1}}\\& \qquad \times \left(\frac{1}{2\pi
i}\right)^{r-1} \oint\cdots \oint_{\cC_2 \times ... \times \cC_{r}} G_{\mu - k_1
\ve_{i_1}}^{\hat{\vk}_1} (\hat{\vz}_1 )dz_2 ...dz_{r}. \notag
\end{align}

Now write 
\[
 G_{\mu - k_1
\ve_{i_1}}^{\hat{\vk}_1} (\hat{\vz}_1 ) = G_{\mu}^{\hat{\vk}_1}(\hat{\vz}_1)
\left[\prod_{j=2}^r\frac{(z_1 - z_j)(z_1 - z_j -k_1 + k_j)}{(z_1
- z_j + k_j)(z_1 -z_j - k_1)}\right]_{z_1 = \mu_{i_1}}
\]
and observe that
\[
 G_\mu^{\vk}(\vz) = G_{\mu}^{\hat{\vk}_1}(\hat{\vz}_1) G_\mu^{k_1}(z_1)
\prod_{j=2}^r\frac{(z_1 - z_j)(z_1 - z_j -k_1 + k_j)}{(z_1
- z_j + k_j)(z_1 -z_j - k_1)}.
\]

Since the poles of $G_{\mu}^{k_1}(z)$ are at $\mu_1, ..., \mu_n$, 
for any fixed $z_2, ..., z_r$, for any choice of contour $\cC_1$ having
winding number 1 about the poles of $G_\mu^{k_1}(z_1)$ and winding number zero
about $\{z_j - k_j, z_j + k_1\}_{2 \leq j \leq r}$,
\[
 \frac{1}{2\pi i}\oint_{\cC_1}  G_\mu^{\vk}(\vz) dz_1 = \sum_{i_1 = 1}^n 
\left[z_1^{\underline{k_1}}\prod_{j \neq i_1} \frac{z_1 - \mu_j -
k_1}{z_1- \mu_j}\right]_{z_1 = \mu_{i_1}}G_{\mu - k_1
\ve_{i_1}}^{\hat{\vk}_1} (\hat{\vz}_1 ).
\]
In view of the restriction on $\cC_1, ..., \cC_r$, the identity follows.
\end{proof}

\bibliographystyle{plain}

\end{document}